\newtheorem{lm}{Lemma}
\newtheorem{prop}{Proposition}
\newtheorem{corr}{Corollary}
\newtheorem{defi}{Definition}
\newtheorem{property}{Property}
\newtheorem{theo}{Theorem}
\newtheorem{hypo}{Assumption}
\newtheorem{rem}{Remark}
\newcommand{\Jac}{\text{Jac}}
\newcommand{\ms}[1]{\mathbb{#1}}
\newcommand{\mc}[1]{\mathcal{#1}}
\newcommand{\lip}{Lipschitz }
\newcommand{\R}{\mathbb{R}}
\newcommand{\ve}{\varepsilon}
\def\Span{\text{Span}}
\def\1{\mathchoice {\rm 1\mskip-4mu l} {\rm 1\mskip-4mu l}
{\rm 1\mskip-4.5mu l} {\rm 1\mskip-5mu l}}
\begin{document}
\author{Fran\c{c}ois-Xavier Vialard}
\address[Fran\c{c}ois-Xavier Vialard]{Institute for Mathematical Science\\ Imperial College London\\
53 Prince's Gate, SW7 2PG, London, UK}
\email{f.vialard@imperial.ac.uk}
\title[Stochastic Shape Splines]{Extension to Infinite Dimensions of a Stochastic Second-Order Model associated with the Shape Splines}
\subjclass{Primary: 35R60}
\date{\today}
\begin{abstract}
We introduce a second-order stochastic model to explore the variability in growth of biological shapes with applications to medical imaging. Our model is a perturbation with a random force of the Hamiltonian formulation of the geodesics. Starting with the finite-dimensional case of landmarks, we prove that the random solutions do not blow up in finite time. We then prove the consistency of the model by demonstrating a strong convergence result from the finite-dimensional approximations to the infinite-dimensional setting of shapes. To this end we introduce a suitable Hilbert space close to a Besov space that leads to our result being valid in any dimension of the ambient space and for a wide range of shapes. 
\end{abstract}

\maketitle
\tableofcontents

\section{Introduction}
A new problem has emerged very recently in computational anatomy: the mathematical modeling and the statistical study of biological shape changes. Medical applications are of great interest such as the early detection of disease: for instance, Alzheimer's disease induces hyppocampal atrophy. Current approaches study the shape evolution through indicators (such as the volume or the length of characteristic patterns) or through the parameters of objects with simple geometries (such as ellipsoids) used to describe the more complex biological shape of interest.
Therefore there is still room for a more quantitative analysis of the variability of longitudinal data.

Although the analysis of shape evolution is quite a new question, the analysis of the variability of static shapes has motivated tremendous research in recent years with broad applications in medical imaging. Most of the efforts has been on developing tools to compare two static shapes. This problem is also refered to as the registration problem. Numerous attempts to answer this problem introduce a metric on the space of shape \cite{michor-2007-23,michor-2007} and the geodesic flow \cite{1122513} on this space provides a powerful framework to statistically study the variability of biological organs among a population \cite{vmty04}. Hence it seems reasonable to build out of this framework proper tools to analyse the growth of shapes.
\\
Our work contributes to the field of \emph{large deformation by diffeomorphisms} that emerged twenty years ago with the idea of studying shapes under the action of a group of transformations of the ambient space \cite{gren1}. Thus the distance on the space of shapes is induced by the distance on a group of diffeomorphisms through its action on shapes \cite{0855.57035}. This framework has been widely applied to computational anatomy in the recent years \cite{Younes2009S40,BegIJCV} and important contributions have been made even on numerical issues \cite{1751-8121-41-34-344003}. Different ways of representing shapes have been introduced to fit in this framework, such as points of interest (also refered to as landmarks), measures or currents for surfaces \cite{JoanPhD}. The application of the theory to images is also important since it can avoid pre-segmentation operations that erase information. The approach of \emph{large deformation by diffeomorphisms} has therefore proven to be adaptable and powerful. 
\\
In attempting to describe the growth of biological organs, non-diffeomorphic evolutions should be taken into account at some point. The so-called metamorphoses framework \cite{trouve05:_local_geomet_defor_templ,DBLP:journals/corr/abs-0806-0870} can deal with such evolutions. However we will focus on the diffeomorphic case which is the first step to be understood.

The initial registration problem on images aims at minimizing a functional (see formula \eqref{matching_chap5}) which is the sum of two terms: the first being the cost of the transformation and the second being a similarity measure between the transformed shape $\phi_1.S_0$ and the target $S_{target}$.
\begin{equation} \label{matching_chap5}
\mathcal{J}(u) = \int_0^1 \| u_t \|_V^2 dt + d(\phi_1.S_0,S_{target}) \, ,
\end{equation}
In this equation, $u_t \in L^2([0,1],V)$ is a time-dependent vector field where $V$ is a reproducing kernel Hilbert space of vector fields and $d$ is a distance on the space of shapes. The group of diffeomorphisms is generated by the flow at time $1$ of such time dependent vector fields and an action of this group on the space of shapes. We have denoted the action of $\phi_1$ on $S_0$ by $\phi_1.S_0$. If there then exists a minimum to this functional, this minimum will provide a balance between a good matching of the target shape and the cost of the transformation.
\\
The first term on the right-hand side in \eqref{matching_chap5} should reflect the likelihood of the deformation $\phi_1$. Therefore this matching procedure is motivated by a Bayesian approach as presented in \cite{Dupuis1}: the starting point is to interpret the minimization of the functional \eqref{matching_chap5} as a maximum a posteriori. Although in \cite{Dupuis1} no rigorous results were established to make the connection between the MAP interpretation and the minimisation problem, a rigorous asymptotic theory of the problem was developed recently in \cite{largedev}. The author prove a large deviation principle for which the rate is the first term of \eqref{matching_chap5}. The probabilistic foundations for their work are given by the study of stochastic flows of diffeomorphisms developed in \cite{Kunita} and the random object associated with the prior on the diffeomorphism in \eqref{matching_chap5} is the stochastic flow defined by,
\begin{equation}
\begin{aligned}
& \phi_t(x) = \int_0^t W_s(\circ ds , \phi_s(x)) \, , \\
& W_t = \sum_{i=0}^{\infty} B_i(t)e_i \, .
\end{aligned}
\end{equation}
In these equations $(B_i)_{i \in \ms{N}}$ are i.i.d Brownian motions, $(e_i)_{i \in \ms{N}}$ is an orthonormal basis of $V$ and the symbol $\circ$ stands for the Stratonovich integral. If the space of vector fields is smooth enough (regularity assumptions on the kernel), the proof of the existence of the stochastic flow can be found in Theorem 4.6.5 of \cite{Kunita}. Through the action of the random diffeomorphisms, this approach gives evolutions of the shape that are non- smooth in time due to the Brownian motions. However, at each time the transformation is smooth in space as we can see in figures Fig.~\ref{Kunita1} and Fig.~\ref{Kunita2}. In these two figures, we present the time evolution ($z$-axis) of $40$ points on the unit circle under the transformation of a Kunita flow of diffeomorphisms for a Gaussian kernel of width $0.9$.

\begin{figure}[htbp]
 \begin{minipage}{.45\linewidth}
  \centering	\includegraphics[width=7cm]{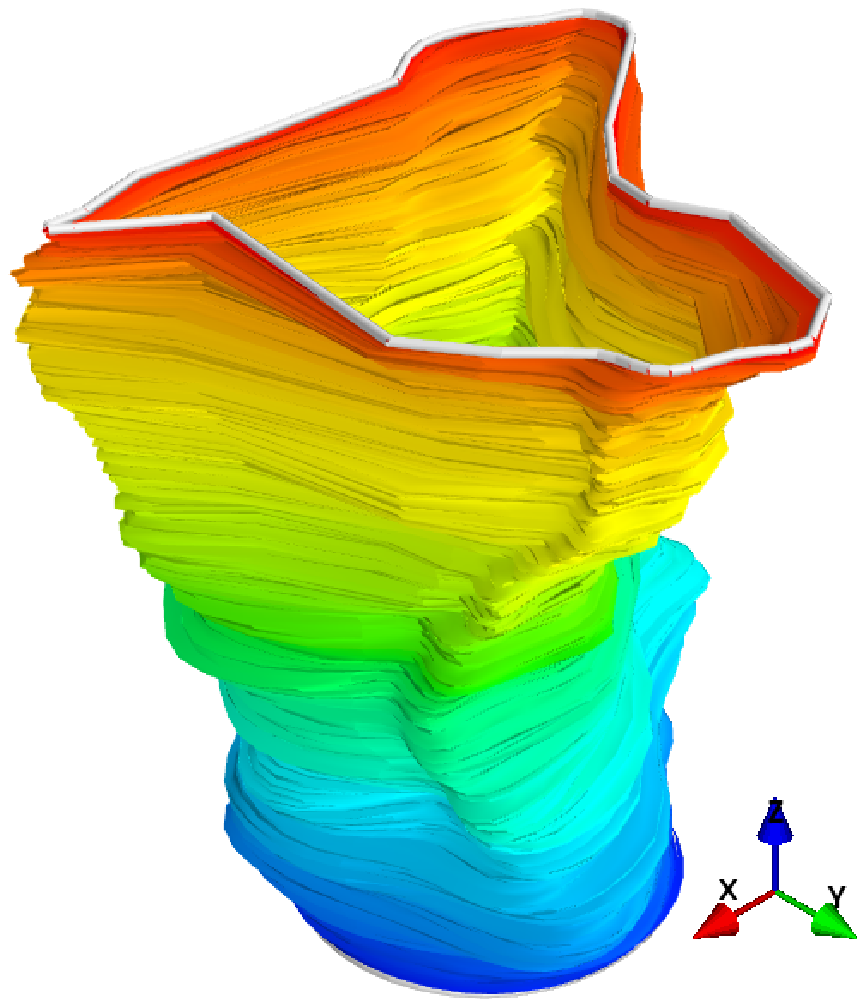}
 \caption{\footnotesize{Simulation of Kunita flow - $40$ points on the white unit circle as initial shape.}}
 \label{Kunita1}
\end{minipage} \hfill
\begin{minipage}{.5\linewidth}
\centering\includegraphics[width=7cm]{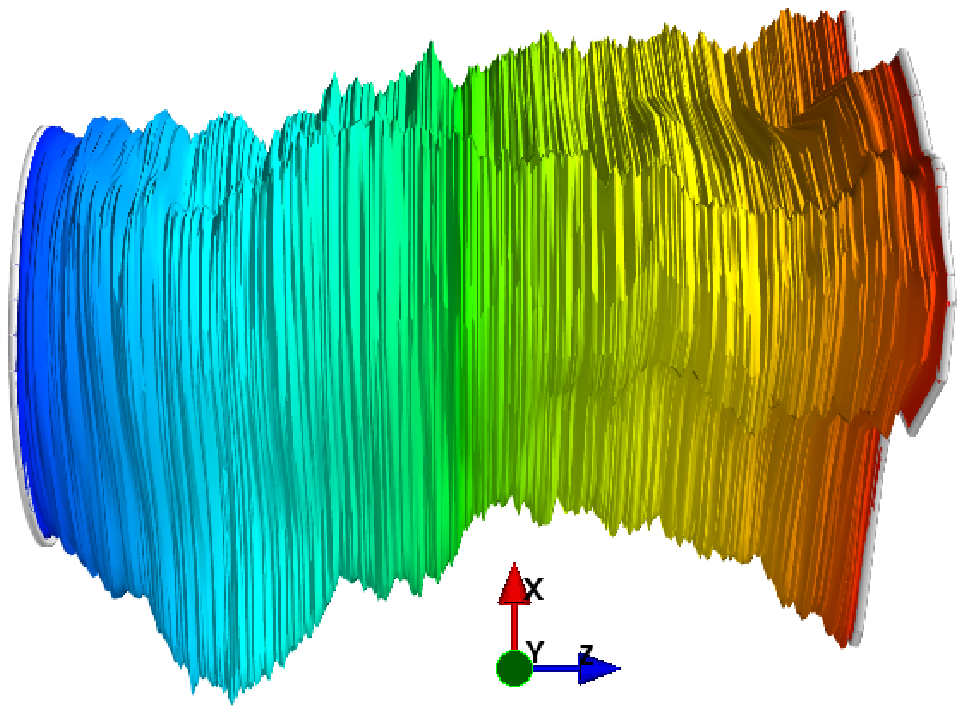}
\caption{\footnotesize{Simulation of Kunita flow - the time axis is $z$, the blue arrow.}}
  \label{Kunita2}
 \end{minipage} \hfill
\end{figure}

We would definitely prefer a probabilistic framework for smooth evolutions of shapes that we think are closer to biological growth evolutions (note that this hypothesis is heuristic). In addition, the large deviation result in \cite{largedev} does not lead to a generative model for diffeomorphic evolutions in this context. One important property of the model would be the smoothness (i.e. not as rough as a standard path of the Brownian motion) of trajectories. 
\\
\emph{To build a second-order model coherent with the framework developed for diffeomorphic matching is a natural way to overcome this issue.}

Let us discuss the finite-dimensional case of particles. In the landmark case, the minimization of \eqref{matching_chap5} reduces to the calculation of the geodesic flow on a Riemannian manifold. The Hamiltonian formulation of this geodesic flow is often used in practical applications \cite{bb37524} and seems appealing as well to build this second-order model. To describe time dependent evolutions we can introduce a control term in the equation of the momentum that would guide the trajectory to match the evolution. Minimizing an energy term on the control variable would lead to a generalized version of the splines on a Riemannian manifold pioneered in \cite{Noakes1}. We have recently studied this model that we called splines on shape spaces in \cite{ShapeSplines} focusing on the finite-dimensional case.
The new evolution equations on the Riemannian manifold $M$ of landmarks (i.e. $q\in M$ stands for a group of points) are then
\begin{equation} \label{control}
\begin{cases}
\dot q = -\partial_q H \\
\dot p = \partial_p H + u\,,
\end{cases}
\end{equation}
and we aim at minimizing 
\begin{equation} \label{spline}
\mathcal{E}(u) = \int_0^T g(u_t,u_t) \, dt + \sum_{i=1}^n|q_{t_i} - x_{t_i}|^2 \,,
\end{equation}
where $g$ is a metric that measures the cost of the forcing term $u_t \in T^*M$. The random object associated with this model is obtained by replacing $u_t$ with a standard white noise. Hence it can give a reasonable stochastic model to generate $C^1$ trajectories in $M$. This stochastic model seems promising to study since we expect to keep the numerical tractability allowed by the Hamiltonian formulation. 
\\
However the main interesting feature concerning the modeling aspect of our work is the physical interpretation of these equations. If we consider the evolution of landmarks as a physical system of particles, it seems natural to introduce a random force to their evolution: an additive white noise is added to the evolution equation of the momentum. This idea of perturbing the evolution equations with a random force has been introduced for a long time in the stochastic fluid dynamics community (\cite{Bensoussan}). Our stochastic system is a stochastic perturbation of Euler-Poincaré equation coding for the geodesics on a group of diffeomorphisms (also refered to as EPDiff equation, \cite{citeulike:620104}). From this point of view, this work may have some relations with the study of stochastic perturbations of the vortex model (\cite{anna} and \cite{fluidspde} for a brief survey). We do not develop these links in this work but instead we will focus on this model of growth of shape. However, to turn this model into a tractable candidate to deal with a collection of shape evolutions at different times and to perform statistical studies on real data, we would need to introduce a drift term (i.e. a deterministic forcing term) in the momentum equation. Finally, if the stochastic model is well-posed when there is no forcing term, it will not be difficult to extend it.
\\
By well-posed, we mean it possesses the following two features:
\begin{itemize}
\item the existence for all time of the solutions of the stochastic equations (since the Hamiltonian system does not have linear growth),
\item the extension of the model to infinite dimensions (on shape spaces) and associated convergence results. 
\end{itemize}
In this work, we answer both questions in the affirmative and the strategies followed are the classical ones:
for the non-blow-up result, the application of the It\^{o} formula gives a linear control on the expectation of the energy of the system measured by the Hamiltonian. The extension to infinite dimensions relies on the construction of a new Hilbert space that is close to Besov space. This Hilbert space is much more tractable than the classical Sobolev or Besov spaces and it suits perfectly our convergence result that is somehow disconnected from the chosen Hilbert space for the approximation.

\vspace{0.2cm}
\noindent
The paper is organized as follows: after an introduction to the deterministic case in Section \ref{overview} in which we present the convergence to the infinite-dimensional case of curves, we prove in Section \ref{landmarkcaseSection} that the SDE in the finite-dimensional case of landmarks has solutions for all time. In Subsection \ref{prop_needed}, we prove the property that the shape space should fulfill in order that the SDE in infinite dimensions is well defined. To give an example of such a space, we introduce in Section \ref{F_s} a new Hilbert space $F_s$ which has interesting properties of stability under composition with smooth functions, product stability and which also contains smooth functions. 
\\
We define the cylindrical Brownian motion in Section \ref{noise_intro} and the It\^{o} integral in a useful way for the convergence results developed in Section \ref{generalsolutions}. These results rely on approximation lemmas detailed in Section \ref{Approx_lemmas} that are somehow disconnected from the finite-dimensional approximations. Section \ref{applications} draws on the previous sections to illustrate applications of this convergence results. We also show some numerical simulations. Finally Section \ref{conclusion} tries to open research directions around this stochastic second-order model essentially motivated by applications.

\section{Overview of the deterministic case} \label{overview}
\subsection{Optimal control heuristic}
In this section we present an optimal control heuristic to derive the Hamiltonian equations that can be formally applied in the finite or infinite-dimensional case. These results are proven in \cite{laurentbook} for the case of landmarks and in \cite{hamcurves} in the case of curves.

Let $G$ be the group of diffeomorphisms of $\ms{R}^d$ ($d$ is the dimension of the ambient space) generated by the flows of time dependent vector fields in $L^2([0,1],V)$ where $V$ is a Reproducing Kernel Hilbert Space (RKHS) of $C^1$ vector fields on $\ms{R}^d$ with the additional hypothesis:

\begin{hypo} \label{inj1}
There exists a continuous injection of the Hilbert space $V$ of vector fields into $C^1$, i.e. there exists a positive constant $K$ such that
$|v|_{1,\infty} \leq K |v|_V$ for any $v \in V$.
\end{hypo}
We also say that $V$ is $1-$admissible. It is more demanding than the RKHS condition, namely that the pointwise evaluation is a continuous form on $V$. In what follows, $k:\ms{R}^d\times \ms{R}^d \to L(T^*\ms{R}^d) $ will denote the kernel of the RKHS.
\\
Then the minimization problem \eqref{matching_chap5} can be recast into an optimal control problem:
if the space of shapes is a Banach space $E$ endowed with an action of the group $G$ that we assume to be differentiable in the following sense:
There exists a linear map 
\begin{gather*}
V \times E  \mapsto E \\
(v , q) \mapsto v . q
\end{gather*}
such that for any $v \in L^2([0,1],V)$ we have
\begin{align*} 
& \frac{d}{dt} [\phi_{0,t}.q] = v_t . [\phi_{0,t} . q] \, a.e., \\
& \phi_{0,1}.q = q + \int_0^1 v_t . [\phi_{0,t} . q] \, dt \, \forall t \in [0,1].
\end{align*}
The existence of a minimizer for the functional \eqref{matching_chap5} in situations of interest is usually proven via standard arguments of lower semi-continuity for the weak topology on $L^2([0,1],V)$. Let us assume that there exists a minimizer $v_0$ for the functional \eqref{matching_chap5}, then it also minimizes the energy $\frac{1}{2}\int_0^1 |v_t|^2_{V} \, dt$ with fixed endpoint $\phi_{0,1}^v.q_0 = \phi_{0,1}^{v_0}.q_0$. The optimal control theory enables us to be a little more general by assuming that we are interested in the solutions of the minimization of:
\begin{equation} \label{optimalproblem}
\begin{cases}
& \inf \frac{1}{2}\int_0^1 |v_t|^2 dt \\ 
& q(0) \in M_0 \\
& q(1) \in M_1 
\end{cases}
\end{equation}
with $M_0,M_1$ two subsets of $Q$ with tangent spaces at a point $q_i \in M_i$ denoted by $T_{q_i}M_i$ for $i = 0,1$. The case of $M_0$ and $M_1$ can be found in the case of curves considered up to reparameterization as demonstrated in \cite{reparam}: the two subsets $M_i$ for $i \in \{0,1\}$ are generated by the action of the group of diffeomorphisms of $S_1$ and the functional \eqref{matching_chap5} may be invariant for this action. In that case, the Pontryagin Maximum Principle (PMP, \cite{MR2062547,trelatbook}) provides orthogonality relations for the momentum.
With $E^*$ the dual of $E$, the control on $q \in E$ is $v \in V$ with an instantaneous cost function $\frac{1}{2}|v|_V^2$ and we have $\dot q = v.q$. Then, the Hamiltonian system associated with this minimization problem is
\begin{equation} \label{Hamiltoncontrol}
H(p,q,v) = (p,v.q)_{E^*,\,E} - \frac{1}{2} \langle v, v \rangle_V \, .
\end{equation}
Before minimizing in $v$, we need to assume that 
\begin{gather*}
V \mapsto \ms{R} \\
v \mapsto (p,v.q)_{E^*,\,E}
\end{gather*}
is a continuous linear form on $V$ for any $p,q \in E^* \times E$ (hypothesis (H1)). 
\\
For example in the case of landmarks the hypothesis (H1) just says that the pointwise evaluation on $V$ is continuous, i.e. $V$ is a RKHS of vector fields. Then, by the Riesz theorem there exists $ p \diamond q \in V^*$ defined by the equation:
$$(p,v.q)_{E^*,\,E} = -(p \diamond q,v)_{V^*,V} \, .$$ 
This notation is taken from \cite{DBLP:journals/corr/abs-0806-0870} and it is also known as the momentum map in geometric mechanics.

The second term of equation \eqref{Hamiltoncontrol} can be rewritten as $(Lv,v)_{V^* \times V}$.
Then, at a minimum we can differentiate in $v$ to obtain 
$$
Lv + p \diamond q = 0 \, 
$$
or equivalently, $v  + K(p \diamond q) = 0 $.

Therefore the 'minimized' Hamiltonian is,
\begin{equation} \label{minHam}
H(p,q) = (p \diamond q , K p \diamond q)_{V^*,V} - \frac{1}{2} \langle v, v \rangle_V  = \frac{1}{2} (p \diamond q , K p \diamond q)_{V^*,V} \, .
\end{equation}
The Pontryagin maximum principle says that a minimizer of the problem \eqref{optimalproblem} verifies the following Hamiltonian system 
\begin{equation} \label{Controlhamiltonsystem}
\begin{cases} 
\dot p = - \partial_q H(p,q) \\
\dot q = \partial_p H(p,q) \, , \nonumber
\end{cases}
\end{equation}
with orthogonality conditions
\begin{gather*}
p(0) \perp T_{q(0)} \, ,\\
p(1) \perp T_{q(1)} \,.
\end{gather*}
At this point, we need to give a sense to $\partial_q H(p,q)$ in \eqref{Controlhamiltonsystem}. Assuming that $q \mapsto (p,v.q)$ is differentiable for every $q \in E$, we write
$\delta q \mapsto \partial_q (p,v.q)(\delta q)$. In addition, we assume that $\partial_q (p,v.q)(\delta q)$ is a linear form on $V$ (hypothesis (H2)) that we denote 
$$\partial_q (p,v.q)(\delta q) = -(\partial_q (p \diamond q)(\delta q),v)_{V^* \times V} \, .$$ 
The differentiation of $H(p,q)$ reads,
$$ \partial_q H(p,q) = (\partial_q (p \diamond q), K (p \diamond q)) \, .$$
In the landmark case, the second hypothesis (H2) says that the pointwise evaluation for the first derivative of the vector fields is continuous on $V$. In particular, if $V$ is $1-$admissible, this condition is fulfilled.

We now present the case of landmarks that will be the cornerstone of this work. The Hamiltonian system reads, if $q \doteq (q_i)_{i \in [1,n]}$ are the particles in $\ms{R}^d$ and $p \doteq (p_i)_{i\in [1,n]}$ are the associated momentums
\begin{equation} \label{LandmarkCaseDeteministe}
\begin{cases}
\dot{p_i} = - \sum_{j=1}^n \partial_1 k(q_i,q_j) \langle p_i,p_j \rangle_{\ms{R}^d} \,,\\
\dot{q_i} = \sum_{j=1}^n k(q_i,q_j)p_j \,.
\end{cases}
\end{equation}
This is the Hamiltonian system that will be perturbed in Section \ref{landmarkcaseSection}.

Let us discuss the case of curves following the point of view adopted in \cite{hamcurves}. We consider generalized closed curves, which means that  we will work on $Q = L^2(S_1,\ms{R}^2)$. Of course, this framework can be extended to $Q = L^2(M,\ms{R}^d,\mu)$ with $M$ a compact Riemannian manifold and $\mu$ its associated measure, for instance $M=S_n$ the $n$-dimensional sphere or $M=T_n$ the $n$-dimensional torus. The action by $G_V$ is simply the left composition on $Q$.
The map on $V\times Q$ induced by the action of $G_V$ is also the left composition with $v \in V$: $(v,q) \mapsto v \circ q$ and the hypothesis (H1) is verified if $V$ is an admissible space of vector fields since:
$$ \int_{M} \langle v(q(s)),p(s) \rangle d\mu(s) \leq |p|_{L^2} |v \circ q|_{L^2} \leq |p|_{L^2} |v|_{\infty} \sqrt{\mu(M)} \, .$$ 
The second hypothesis (H2) is verified in this case too replacing $v$ by $dv$. Remark that $[\partial_q v \circ q].\delta q = [dv \circ q] (\delta q) \in L^2(M,\ms{R}^d,\mu)$.

\vspace{0.2cm}
\noindent
The transversality conditions are interesting in the case of curves considered up to reparameterization. If the initial curve $c_0$ and the final one $c_1$ are smooth enough, the action of the diffeomorphism group of $S_1$ generates a large subspace of tangent vectors at $c_i$ for $i=0,1$: let $w$ be a smooth vector field on $S_1$, if $\psi_t$ is the flow generated by $w$, we have $\frac{d}{dt}_{t=0} \phi_t(x) = w(x)$ then,
$$ \{ s \mapsto w(s) c_i'(s) \,  |  \, w \in \mc{X}^{\infty}(S_1) \} \subset T_{c_i}\, .$$
The orthogonality condition says that $p_i \perp T_{c_i}$ and considering all the choices for $w$ (any smooth vector field on $S_1$) we obtain that 
$$ \langle p_i(s), c_i'(s) \rangle = 0 \; a.e. \, s \in S_1 \, .$$

\subsection{Convergence to the infinite-dimensional case}

We develop a consequence of the Hamiltonian formulation of the equations originally written in \cite{hamcurves}, not written in this article. This paper presents a rigorous proof of the existence in all time of the solutions to the Hamiltonian equations when the space of closed curves is the Hilbert space $H=L^2(S_1,\ms{R}^2)$ and the momentum variable lies in the dual space of $H$, identified to $H$. The structure of the momentum variable is determined by the differentiation of the attachment term in \eqref{matching_chap5} and the situation $p \in H$ arises for a large class of attachment term.
The Hamiltonian system 
\begin{subequations} \label{syst}
\begin{align}
\partial_q H(p_t,q_t) &= -p_t(.) \int_{S_1} \partial_1 k(q_t(.),q_t(s))p_t(s)ds\,, \label{eq1} \\
\partial_p H(p_t,q_t) &= \int_{S_1} k(q_t(.),q_t(s))p_t(s)ds\,, \label{eq2}
\end{align}
\end{subequations}
has solutions for all time for any initial conditions $(p_0,q_0) \in H^2$. 

\noindent
A simple though important remark is 
\begin{rem}
The ODE \eqref{syst} conserves the common structure of $p$ and $q$: i.e. if $p$ and $q$ are both constant (in space) on an interval (resp. a measurable set on $S_1$) then the solution $(p,q)$ will be constant on this interval (resp. on this measurable set).
\end{rem}
The consequence of this remark is that the landmark case is a special case of the ODE \eqref{syst}. Consider the $n$-dimensional subspace of $H$ for $n \geq 1$, $H_n=\Span_{i \in [0,n-1]} (1_{[\frac{i}{n},\frac{i+1}{n}[})$, then $H_n$ is one candidate to describe the trajectories of $n$ landmarks, taking initial conditions $(p_0,q_0) \in H_n \times H_n$. It gives also a convergence property by the continuity of solutions of a \lip ODE system. With stronger assumptions on the convergence of $q^n$ but still the same assumption on the convergence for $p^n$, we obtain strong convergence of $q^n$.

\begin{prop}
Let $(p_0^n,q_0^n) \in H_n \times H_n$ be initial conditions for the system \eqref{syst} with $\lim_{n \mapsto \infty} (p_0^n,q_0^n) = (p,q)$ then, the solutions $(p^n_t,q^n_t)$ converge in $H \times H$ to $(p_t,q_t)$ uniformly for $t$ in a compact set.
If we assume in addition that $\lim_{n \mapsto \infty} \| q^n -q \|_{\infty} = 0$ then $\lim_{n \mapsto \infty} \| q^n_t -q_t \|_{\infty} = 0$ uniformly for $t$ in a compact set.
\end{prop}

\begin{proof}
The first point is the direct application of the continuity theorem for the Banach fixed point theorem with parameter. The second point is a consequence of the first one: since 
\begin{equation}
|v\circ q_n - v\circ q|_{L^2} \leq |dv|_{L^{\infty}}|q_n-q|_{L^2} \leq K |v|_V |q_n-q|_{L^2} \,,
\end{equation}
it implies that $p_n \diamond q_n$ is bounded in $V$ and it is weakly convergent to $p \diamond q$. Then the convergence in $V$ and Assumption \ref{inj1} implies the convergence in $L^{\infty}$ uniformly for $t$ in a compact set.
\end{proof}

\section{The stochastic model for landmarks} \label{landmarkcaseSection} 
The simplest perturbation of the deterministic Hamiltonian equations to obtain a second-order stochastic model is the addition of a white noise in the momentum equation. Closely related to splines on shape spaces introduced in \cite{ShapeSplines}, this stochastic model is also presented but only in the finite-dimensional case.

\begin{subequations} \label{M1}
\begin{gather}
dp_t = -\partial_q H(p_t,q_t) \, dt + \ve dB_t \, ,   \\ 
dq_t = \partial_p H(p_t,q_t) \, dt\,. 
\end{gather}
\end{subequations}
\noindent
Here, $\ve$ is a positive real parameter and $B_t$ is a Brownian motion on $\R^{dn}$ and we can think of the kernel as a diagonal kernel, for instance the Gaussian kernel or the Cauchy kernel (which verify hypothesis (H1) and (H2)). To study this SDE, we will use the It\^{o} stochastic integral. From the theorem of existence and uniqueness of solutions of stochastic differential equation under the condition of linear growth, we can work on the solutions of such equations for a large range of kernels. However in our case the Hamiltonian is quadratic, and the classical results for existence and uniqueness of stochastic differential equations only prove that the solution is locally defined. In the deterministic case, the Hamiltonian which represents the energy of the system remains constant along the geodesic paths. By controlling the Hamiltonian of the stochastic system, we will prove that the solutions are defined for all time. 
\\ 
First remark that if $x \in \ms{R}^d$ and $\alpha \in \ms{R}^d$ then 
\begin{equation}\label{IneqRKHS}
\langle \alpha , k(x,x)\alpha \rangle_{\ms{R}^d} \leq K^2 |\alpha|_{\ms{R}^d}^2 \,.
\end{equation}
Now we introduce the stopping times defined as follows: let $M>0$ be a constant and
\begin{equation} \label{stoppingtimes}
\tau_M = \{ t \geq 0 \, | \,\max(|q_t|,|p_t|) \geq M \} \, , 
\end{equation}
let also $\tau_\infty=\lim_{M\to\infty}\uparrow
\tau_M$ be the explosion time. 
\\
Differentiating $H(p_{t \wedge \tau_M},q_{t \wedge \tau_M})$ with respect to $t$, we get on $(t<\tau_M)$:
\begin{equation*}
dH(t) = \partial_q H(p_t,q_t)dq_t + \partial_p H(p_t,q_t)dp_t  + \frac{\ve^2}{2} \sum_{i=1}^n \text{tr}(k(q_i(t),q_i(t))) dt \,.
\end{equation*}
In the deterministic case the Hamiltonian is constant, whereas the stochastic perturbation gives
$$ \partial_q H(p_t,q_t)dq_t + \partial_p H(p_t,q_t)dp_t = \ve \partial_p H(p_t,q_t) dB_t \,.$$
\begin{align}
&\int_0^{T \wedge \tau_M} dH(t) = \int_0^{T \wedge \tau_M} \ve \langle \partial_p H(p_t,q_t), dB_t \rangle  + \int_0^{T \wedge \tau_M} \frac{\ve^2 }{2}\sum_{i=1}^n \text{tr}(k(q_i(t),q_i(t))) dt  \, ,  \nonumber\\
& E[H(p_{T \wedge \tau_M},q_{T \wedge \tau_M})] \leq H(0) + E(\frac{\ve^2}{2} dn \, T \wedge \tau_M) \leq H(0) + (K\ve )^2 dn T \, .  \label{eq:1} 
\nonumber
\end{align}
Now, we aim at controlling $q_{t \wedge \tau_M}$ using the control on $dq_t$ given by $|\partial_p H(p_t,q_t)|_{\infty} \leq K \sqrt{H(p_t,q_t)}$:
  \begin{multline}
    |q_{\tau_M\wedge t}|\leq |q_0|+\int_0^{\tau_M\wedge
      t}KH(p_s,q_s)^{1/2}ds
\leq |q_0|+\int_0^{\tau_M\wedge
      t}KH(p_{s\wedge \tau_M},q_{s\wedge\tau_M})^{1/2}ds\\
\leq  A_t\doteq  |q_0|+\int_0^{\tau_\infty\wedge
      t}KH(p_{s\wedge \tau_\infty},q_{s\wedge\tau_\infty})^{1/2}ds\,.
  \end{multline}
However, $0\leq A_t$ $P$ a.s.  and 
by monotone convergence theorem (recall that $H$ is non-negative),
$$E(A_t)=\lim_{M\to\infty}(|q_0|+E\left(\int_0^{t\wedge\tau_M} KH(p_{s\wedge
  \tau_M},q_{s\wedge \tau_M})^{1/2}ds \right).$$
Also,
 \begin{multline}
E\left(\int_0^{t\wedge\tau_M} H(p_{s\wedge
  \tau_M},q_{s\wedge \tau_M})^{1/2}ds\right)\leq E\left(\int_0^{t} H(p_{s\wedge
  \tau_M},q_{s\wedge \tau_M})^{1/2}ds\right)\\
\stackrel{Fub.}{=}\int_0^{t} E\left(H(p_{s\wedge
  \tau_M},q_{s\wedge \tau_M})^{1/2}\right)ds\stackrel{Jen.}{\leq}\int_0^{t} E\left(H(p_{s\wedge
  \tau_M},q_{s\wedge \tau_M})\right)^{1/2}ds\\
\stackrel{CS+\eqref{eq:1}}{\leq}\sqrt{t}\left(\int_0^t(H(0)+(K\varepsilon)^2 nds)\,
  ds\right)^{1/2} \, . \label{eq:1'}
\end{multline}  
We deduce 
$$E(A_t)\leq |q_0|+K\sqrt{t}\left(\int_0^t(H(0)+(K\varepsilon)^2 nds)\,
  ds\right)^{1/2}<\infty\text{ and }A_t<\infty\ P\ a.s.$$
and as a consequence $$\limsup_{M\to\infty} |q_{t\wedge\tau_M}| < +\infty \, P \, a.s.$$
We also control the evolution equation of the momentum as follows,
\begin{equation} \label{unederniere}
 |p_{t \wedge \tau_M}| \leq \int_0^{t \wedge \tau_M} |\partial_q H(p_s,q_s)| \, ds + | p_0 + \,\int_0^{t \wedge \tau_M} \ve dB_s \, | \, .
\end{equation}
Now we use the assumption \ref{inj1} to control $\partial_q H(p,q)$:
\begin{equation*}
|\partial_q H(p,q)| \leq |p| |dv(q)| \leq K|p| H^{1/2} \, .
\end{equation*}
We rewrite inequality \eqref{unederniere} and we use Gronwall's Lemma to get:
\begin{align*}
& |p_{t \wedge \tau_M}| \leq \int_0^{t \wedge \tau_M}  K|p_s| \, H(p_{s},q_{s})^{1/2} \, ds + | p_0 + \,\int_0^{t \wedge \tau_M} \ve dB_s \, | \, , \\
& |p_{t \wedge \tau_M}| \leq \left(|p_0| +  \sup_{u \leq t} | \,\int_0^{u \wedge \tau_M} \ve dB_s \, | \right) e^{\int_0^{t \wedge \tau_M} K H(p_s,q_s)^{1/2} ds } \, , \\
& |p_{t \wedge \tau_M}| \leq \left(|p_0| +  \sup_{u \leq t\wedge \tau_{\infty}} | \,\int_0^{u} \ve dB_s \, | \right) e^{\int_0^{t \wedge \tau_{\infty}} K H(p_s,q_s)^{1/2} ds } \, .
\end{align*}
The first term on the right-hand side $|p_0| +  \sup_{u \leq t\wedge \tau_{\infty}} | \int_0^{u} \ve dB_s \, |$ is bounded by $ |p_0| +  \sup_{u \leq t} | \,\int_0^{u} \ve dB_s \, | < \infty \, P \, a.s.$ and with inequality \eqref{eq:1'} we have that $$  e^{\int_0^{t \wedge \tau_{\infty}} K H(p_s,q_s)^{1/2} ds} < \infty \, P \, a.s. $$
Since on $(\tau_\infty\leq t)$ one
has $$\lim_{M\to\infty}\max(|q_{t\wedge\tau_M}|,|p_{t\wedge
  \tau_M}|)= \lim_{M \to \infty} |p_t| = \infty \, ,$$ we deduce $P(\tau_\infty\leq t)=0$ and
$\tau_\infty=+\infty$ almost surely.

\vspace{0.3cm}

\noindent
We have proven for the case $\ve(p,q) = \ve Id$,

\begin{prop} \label{landmarkcase}
Under assumption \ref{inj1} and if $\ve: \ms{R}^{nd} \times \ms{R}^{nd} \mapsto L(\ms{R}^{nd})$ is a \lip and bounded function, the solutions of the stochastic differential equation defined by
\begin{eqnarray*} \label{sytem_sto_land}
 dp_t &=& -\partial_q H(p_t,q_t)dt + \ve(p_t,q_t) dB_t   \\ 
 dq_t &=& \partial_p H(p_t,q_t)dt. 
\end{eqnarray*}
do not blow up in finite time a.s.
\end{prop}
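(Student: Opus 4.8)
The plan is to reproduce, almost verbatim, the argument just carried out for $\ve(p,q)=\ve\,\mathrm{Id}$, since only two local computations in that proof actually see the precise form of the diffusion coefficient, and both survive the replacement of the scalar $\ve$ by a bounded matrix field. First I would record local well-posedness: the kernel being smooth, $\partial_q H$ and $\partial_p H$ are locally Lipschitz, and $\ve$ is Lipschitz by hypothesis, so the standard existence--uniqueness theorem for SDEs with locally Lipschitz coefficients furnishes a unique strong solution up to the explosion time $\tau_\infty=\lim_{M\to\infty}\tau_M$, with $\tau_M$ as in \eqref{stoppingtimes}. The whole point is then to show $P(\tau_\infty\le t)=0$ for every fixed $t$.

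The crux is the linear-in-time control of $E\big[H(p_{t\wedge\tau_M},q_{t\wedge\tau_M})\big]$. Applying It\^o's formula to $H$ stopped at $\tau_M$, the two drift contributions cancel exactly as in the deterministic conservation of energy, and one is left with a martingale term $\int \langle \partial_p H, \ve(p,q)\,dB\rangle$ together with the second-order It\^o correction $\tfrac12\int \mathrm{Tr}\big(\ve^{T}\,(\partial_p^2 H)\,\ve\big)\,dt$. This correction is the only place where the scalar proof must be revisited. Here the $p$-Hessian is precisely the operator-valued Gram matrix $\partial_p^2 H=(k(q_i,q_j))_{i,j}$, which is positive semidefinite because $k$ is a reproducing kernel; consequently $\mathrm{Tr}\big(\ve^{T}(\partial_p^2 H)\ve\big)\le \|\ve\|^2\,\mathrm{Tr}(\partial_p^2 H)=\|\ve\|^2\sum_{i}\mathrm{tr}\,k(q_i,q_i)$, and by the RKHS bound \eqref{IneqRKHS} the trace is at most $ndK^2$. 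Since $\ve$ is bounded, the correction is dominated by a constant, so taking expectations (the martingale drops out) reproduces the estimate $E[H(p_{t\wedge\tau_M},q_{t\wedge\tau_M})]\le H(0)+C\,t$, with $C$ depending only on $K,d,n$ and $\sup\|\ve\|$.

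The remaining two steps then transcribe without change, because the deterministic structure of the system is untouched by the choice of $\ve$: the $q$-equation carries no noise, and the bounds $|\partial_p H|_\infty\le K\,H^{1/2}$ and $|\partial_q H|\le K|p|\,H^{1/2}$ used before still hold. I would first integrate the $q$-equation and push the bound on $E[H]$ through the Fubini--Jensen--Cauchy--Schwarz chain \eqref{eq:1'} to get $\limsup_M|q_{t\wedge\tau_M}|<\infty$ a.s., and in particular $\int_0^{t\wedge\tau_\infty}K H^{1/2}\,ds<\infty$ a.s. Feeding this into \eqref{unederniere} and invoking Gronwall's lemma as before bounds $|p_{t\wedge\tau_M}|$ a.s.; the single adjustment is that $\int_0^{u\wedge\tau_M}\ve(p,q)\,dB$ replaces a scaled Brownian motion, but since $\ve$ is bounded this is an $L^2$-martingale, so Doob's maximal inequality gives $\sup_{u\le t}\big|\int_0^{u\wedge\tau_M}\ve\,dB\big|<\infty$ a.s., uniformly in $M$. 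Finally, on $(\tau_\infty\le t)$ the finiteness of $q$ would force $|p_{t\wedge\tau_M}|\to\infty$, contradicting the bound just obtained; hence $P(\tau_\infty\le t)=0$ and $\tau_\infty=+\infty$ almost surely.

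I expect the positive-definiteness of $\partial_p^2 H$ to be the one genuinely new ingredient, and the only step worth dwelling on. In the scalar case only the diagonal trace $\sum_i\mathrm{tr}\,k(q_i,q_i)$ entered the It\^o correction, whereas a general matrix $\ve$ mixes the landmarks and brings the off-diagonal blocks $k(q_i,q_j)$ into play; it is exactly the Gram-matrix positivity of the reproducing kernel that prevents these cross terms from spoiling the linear growth. Everything else is a faithful copy of the preceding computation.
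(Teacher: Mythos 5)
Your proposal is correct and follows essentially the same route as the paper: local existence from the Lipschitz hypotheses, It\^{o}'s formula on the stopped Hamiltonian with the drift terms cancelling, a constant bound on the correction term $\tfrac12\mathrm{tr}\bigl(\ve^{T}k_{q}\ve\bigr)$ obtained from the positive semidefiniteness of the Gram matrix $k_q=[k(q_i,q_j)]$ and the diagonal bound \eqref{IneqRKHS}, and then the unchanged $q$- and $p$-estimates via Jensen/Cauchy--Schwarz and Gronwall. The only cosmetic difference is that you bound the trace by $\|\ve\|^2\,\mathrm{tr}(k_q)\le ndK^2\|\ve\|^2$ directly, whereas the paper passes through the largest eigenvalue $\lambda^{*}_{k_q}\le\mathrm{tr}(k_q)$ and gets the (looser but equally serviceable) constant $(Knd|\ve|_\infty)^2$; both hinge on the same Gram-matrix positivity you correctly single out as the one new ingredient.
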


\begin{proof}
To extend the proof to the case when $\ve$ is a \lip and bounded function of $p$ and $q$, we just prove that the preceding inequalities are still valid.
\\
First, thanks to the \lip property of $\ve$ the solutions are still defined locally. The It\^{o} formula now reads, on $(t < \tau_M)$
\begin{equation*}
dH(t) = \partial_x H(p_t,x_t)dx_t + \partial_p H(p_t,x_t)dp_t + \frac{1}{2}\text{tr}(\ve^T(p_t,x_t)K_{x_t}\ve(p_t,x_t))dt \,.
\end{equation*}
where $k_{x}$ is block matrix defined by $k_{x}\doteq [k(x_i,x_j)]_{1\leq i,j\leq n}$.
\noindent

We still have the inequality \eqref{eq:1} with 
$$\text{tr}(\ve^T(p_t,x_t)k_{x_t} \ve(p_t,x_t)) \leq (Knd|\ve|_{\infty})^2$$ 
if $|\ve(p,x) w|^2 \leq |\ve|_{\infty}^2 |w|_{\infty}^2$ where $|\ve|_\infty$ denotes the supremum norm. Indeed, if $(e_i)_{i\in [1,nd]}$ the canonical basis of $\ms{R}^{nd}$, denoting $\ve\doteq\ve(x,p)$, we have 
\begin{equation*}
\text{tr}(\ve^t k_x \ve)  = \sum_{i=1}^{nd} \langle \ve(e_i),k_x\ve(e_i) \rangle  
\leq \lambda^{*}_{k_x} \sum_{i=1}^{nd} \langle \ve(e_i), \ve(e_i) \rangle\,, 
\end{equation*}
where $\lambda^{*}_{k_x}$ is the largest eigenvalue of $k_x$. We have $\lambda^{*}_{k_x}\leq \text{tr}(k_x)=\sum_{i=1}^n 
\text{tr}(k(x_i,x_i))$ and using (\ref{IneqRKHS}) we get $\text{tr}(k(x_i,x_i))\leq d\lambda^*_{k(x_i,x_i)}\leq dK^2$ so that $\lambda^{*}_{k_x}\leq K^2nd$. Hence,
$$
\text{tr}(\ve^t k_x \ve)  \leq K^2 nd \sum_{i=1}^{nd} |\ve|_{\infty}^2 \leq (Knd|\ve|_{\infty})^2 \,.
$$ 
Thus we get,
\begin{align*}
&\int_0^{T \wedge \tau_M} dH(t) \leq \int_0^{T \wedge \tau_M} \langle \partial_p H(p_t,x_t), \ve(p_t,x_t)dB_t \rangle + \int_0^{T \wedge \tau_M} \frac{(Knd|\ve|_{\infty})^2}{2}dt \, , \\
& E[H(T \wedge \tau_M)] \leq H(0) + E(\frac{(Knd|\ve|_{\infty})^2}{2} \, T \wedge \tau_M) \leq H(0) + (Knd|\ve|_{\infty})^2\,T \,,
\end{align*}
and all the remaining inequalities follow easily with the control on $H$ and the bound on $\ve$.
\end{proof}

The first comment is that this model perturbs as expected the trajectories of the deterministic system and the model provides realistic evolutions for biological shapes contrary to Kunita flows. When $\ve \rightarrow 0$, the solutions of the system \eqref{sytem_sto_land} converge to the corresponding geodesic $\ve = 0$. The first simulation\footnote{We used a simple Euler scheme to simulate the SDE} in figure Fig.~\ref{GeodesicEvolution} represents the geodesic evolution of a template ($40$ equidistributed points on the white unit circle) to the target ($40$ points on the white ellipse deduced of the initial points with affine transformation), the color change from blue to red represents the time evolution from $0$ to $1$. The kernel is a Gaussian kernel of width $1.0$. The other three simulations are perturbations of this geodesic evolution in which we progressively increase the standard deviation of the white noise from $\epsilon = 0.9$ to $1.7$ and $3.5$. Remark that we have normalized the noise by dividing with the square root of $n \doteq 40$ as it will be suggested by extension to the infinite-dimensional case. It means that the total variance of the noise in the system is equal to $d\epsilon^2$ where $d$ is the dimension of the ambiant space ($d=2$).

The relative smoothness in time is evident in comparison to the simulations of the first order stochastic model.

\begin{figure}[htbp]
 \begin{minipage}{.45\linewidth}
  \centering	\includegraphics[width=7cm]{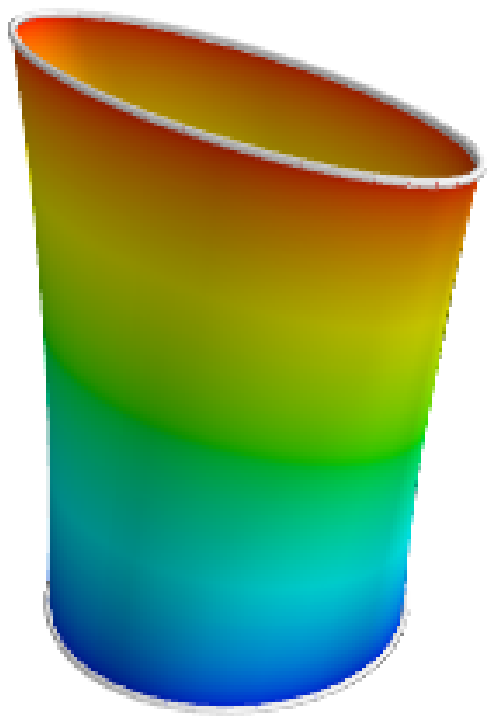}
 \caption{\footnotesize{Geodesic evolution - $40$ points on the unit circle.}}
 \label{GeodesicEvolution}
\end{minipage} \hfill
\begin{minipage}{.45\linewidth}
\centering\includegraphics[width=7cm]{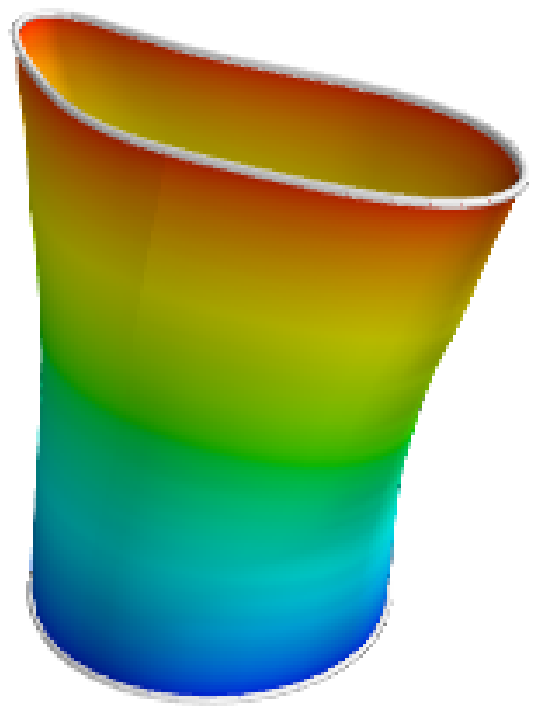}
\caption{\footnotesize{White noise perturbation of the geodesic - $\epsilon = 0.9$}}
  \label{GeodesicNoise0_9}
 \end{minipage} \hfill
\end{figure}

\begin{figure}[htbp]
 \begin{minipage}{.45\linewidth}
  \centering	\includegraphics[width=7cm]{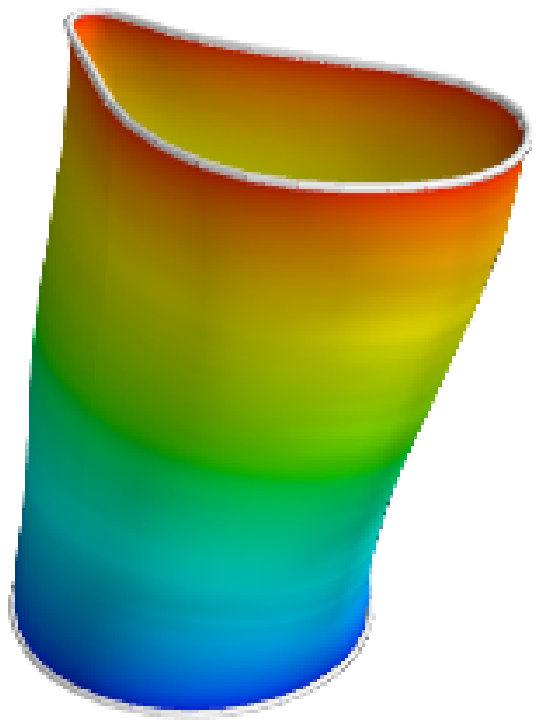}
 \caption{\footnotesize{Increasing the variance of the noise - $\epsilon = 1.7$}}
 \label{GeodesicNoise1_7}
\end{minipage} \hfill
\begin{minipage}{.45\linewidth}
\centering\includegraphics[width=7cm]{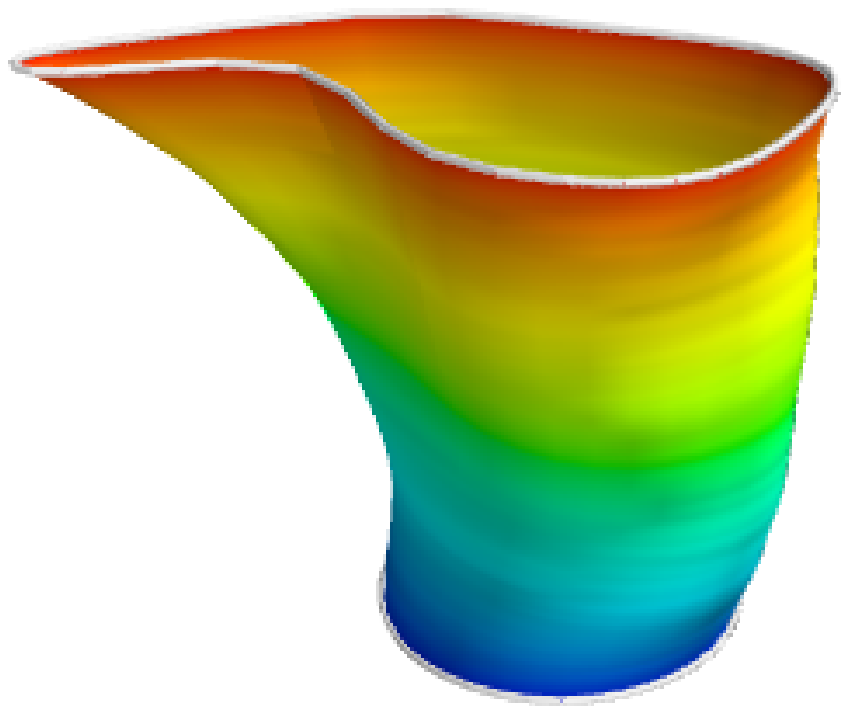}
\caption{\footnotesize{Increasing the variance of the noise continued - $\epsilon = 1.7$}}
  \label{GeodesicNoise3_5}
 \end{minipage} \hfill
\end{figure}

\subsection{Toward the stochastic extension} \label{prop_needed}

Back to the stochastic Hamiltonian system, a natural limit of the system \eqref{M1} appears when increasing the number of landmarks: roughly speaking the energy of the noise should be kept constant. Therefore the Brownian motion in the system \eqref{M1} could be interpreted as the projection of a cylindrical Brownian motion on $L^2(S_1,\ms{R}^2)$. A white noise on the particles can be extended to a white noise on the parameterization of the shape. However we would like to deal with more general noise than the white noise related to that particular parameterization, this is the reason why a general variance term will be studied.

\vspace{0.3cm}
\noindent
Let us first discuss this extension from a heuristic point of view.
\\
We give a short definition of the white noise on $H=L^2(S_1,\mu,\ms{R}^2)$ with $\mu$ the Lebesgue measure. We will come back to this definition later on.
\begin{defi}
Let $(B^i)_{i = 0}^{+\infty}$ be a family of independent real valued Brownian motions and $(e_i)_{i = 0}^{+\infty}$
an orthonormal and complete basis of $H$.
The process $B_t = \sum_{i=0}^{+\infty} B^i_t e_i$ is called a cylindrical Brownian motion on $H$.
\end{defi}

\noindent
Our approach leads to the following equations,
\begin{subequations} \label{syst2}
\begin{align}
&dp_t = -[p_t \int_{S_1} \partial_1 k(q_t,q_t(s))p_t(s)ds]dt + \ve dB_t\,, \label{eq1s2} \\
&dq_t = [\int_{S_1} k(q_t,q_t(s))p_t(s)ds]dt\,. \label{eq2s2}
\end{align}
\end{subequations}
\noindent
There are important issues to be discussed in the structure of this stochastic system. We study these Hamiltonian equations \eqref{syst2} with $q \in Q$ and $p \in P$ for $P$ and $Q$ some undefined Hilbert spaces. This study will provide us with some informations on suitable spaces to develop our approach.
\\
We want $Q$ to contain a large set of piecewise constant functions to account for the landmark case as described above for any number of particles .
\begin{property}
Piecewise constant (at least for a large range of partitions of $S_1$ in intervals) functions are contained in $Q$ such that the case of landmark can be treated within this space.
\end{property}
To properly define the term $\int_{S_1} k(.,q_t(s))p_t(s)ds$ in equation \eqref{eq2s2}, we can add these two following hypothesis
\begin{property}
$P$ and $Q$ are dual and we have the injections
\begin{equation*}
 Q \hookrightarrow L^2 \hookrightarrow P \, .
\end{equation*}
\end{property}
\begin{property} \label{stab_comp}
\item If $K$ is a smooth function, $f\in Q \mapsto K \circ f \in Q$ is locally Lipschitz.
\end{property}
Now the term $\int_{S_1} k(.,q_t(s))p_t(s)ds$ is well defined by $k_qp(.) \doteq (p,k(.,q))_{P,Q}$. If $k_qp(.)$ is sufficiently smooth, then the last property gives a sense to \eqref{eq2s2}:
$$ dq_t = k_qp \circ q \, dt \, .$$
Let us study equation \eqref{eq1s2} which can be rewritten as 
$$ dp_t = -p_t\, (dv_qp \circ q) \, dt + \ve dB_t\, ,$$
with $dv_qp(.) \doteq (p,\partial_1 k(.,q))_{P,Q}$. If this map is smooth enough, $dv_qp \circ q \in Q$ thanks to property \ref{stab_comp}. 
To give a sense to $ p (dv_qp \circ q) \in P$, we ask for the following
\begin{property} \label{stab_mult}
$Q$ is an algebra, the multiplication is continuous for the norm on $Q$.
\end{property}
Indeed if $p \in Q$ and $q_0 \in Q$, then we can define $p.q_0 \in P$ defined by:
$$ (p.q_0,q)_{P,Q} = (p,q_0q)_{P,Q}\, ,$$ the right-hand term being continuous w.r.t. $q$ since the product is continuous.
\\
Finally the noise term should belong to $P$ as follows,
\begin{property}
The paths of the cylindrical Brownian motion $t \rightarrow B_t$ lie almost surely in $C([0,T],P)$ for all $T>0$.
\end{property}
This last property ends to give a sense to equation \eqref{eq1s2}. 
\\
This set of conditions is a guide to get a proper space to prove the results. 
It is well known that $H^1(S_1)$ satisfies all these properties but it does not contain piecewise constant functions.
We will present in the next section a candidate for $(P,Q)$ that fulfills all the previous properties in any dimension (curves, surfaces, $\ldots$). 

\vspace{0.3cm}

Once the system is well-posed, the other issue to be discussed is the existence of solutions to this stochastic system on $P,Q$ and the convergence of the projections (landmark case) to the infinite-dimensional case. As we want to be slightly more general on the noise term, we will tackle the case when 
$$ \ve : P \times Q \mapsto L(P) \, $$
is \lip and bounded which would be a natural extension of proposition \ref{landmarkcase}. 

\section{The Hilbert spaces $F_s = Q$ and $F_{-s} = P$} \label{F_s}

In this section, we present the spaces $P,Q$ that verify all the properties of \ref{prop_needed}. At first sight, we could think about a Sobolev space on the Haar basis. Hopefully the properties we need would be verified. However, it is not convenient to work with Sobolev spaces on the Haar basis to prove the \lip property of the composition with a smooth function. This is our motivation to slightly modify this space by defining the space $F_s =Q$ for $s>0$ and $F_{-s} = P$ which are well suited to easily obtain the required properties of subsection \ref{prop_needed}. 

\vspace{0.3cm}
\noindent
Recall that $H=L^2(S_1,\ms{R})$, we consider the Haar orthonormal basis\index{Haar basis} with $\psi_0(x) = \chi_{[0,\frac{1}{2}[} - \chi_{[\frac{1}{2},1[}$ and 
$\psi_{n,k}(x) = 2^{\frac{n}{2}} \psi_0 \left(2^n(x-\frac{k}{2^n})\right)$ for $n\geq 1$ and $0 \leq k \leq 2^n-1$ and the constant function $\psi_{-1,0}:=1$.  
\\
We define the Haar coefficients of a function $f\in H$ by $$ f_{n,k} = \langle f, \psi_{n,k} \rangle_{H}\, ,$$
for $n\in [-1,+\infty[$ and $k \in A_n \doteq [0,2^n-1]$ if $n\geq 0$ and $A_{-1} = \{ 0 \}$.
\\
Let us start with a simple remark.

\begin{rem} \label{dualineq}
Let $g \in L^{\infty}(S_1,\ms{R})$ be a function, we have
$$ |g_{n,k}| \leq |g|_{\infty} |\psi_{n,k}|_{L^1} = 2^{-\frac{n}{2}}|g|_{\infty} \, .$$ 
\end{rem}

\begin{defi}
We define $H_{s} = \{ f \in H ; |f|_{H_{s}}^2 = \sum_{n=-1}^{\infty}\sum_{k \in A_n} 2^{ns}|f_{n,k}|^2 < +\infty \}$, with $s \geq 0$ a nonnegative real number. For $s<0$, we define $H_{s}$ as the dual of $H_{-s}$. 
\end{defi}

\noindent
We study some properties of an element in this Hilbert space.

\begin{prop} \label{continuity}
An element $f \in H_{s}$ for $s > 1$ is continuous in every $x \in [0,1]$ which is not a dyadic number, more precisely, 
if $a=\frac{2k+1}{2^{n+1}}$ with an integer $k$ such that $2k+1 \in [0,2^{n+1}-1]$ and $(x,y) \in B(a,\frac{1}{2^{n+1}})^2$, then with $C_s^2 = \sum_{i=0}^{+\infty} 2^{-i(s-1)} = \frac{2^{s-1}}{2^{s-1}-1}$
\begin{equation}
|f(x) - f(y)| \leq C_s \frac{2}{2^{n\frac{(s-1)}{2}}} |f|_{H_{s}} \,.
\end{equation}  
\end{prop}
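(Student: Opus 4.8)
The plan is to exploit the dyadic multiresolution structure of the Haar system and to control the oscillation of $f$ through a telescoping sum whose increments are governed by individual Haar coefficients. For $N\geq 0$ write the partial sum $f^N\doteq\sum_{m=-1}^{N-1}\sum_{k\in A_m}f_{m,k}\psi_{m,k}$; since each $\psi_{m,k}$ with $m\leq N-1$ is constant on every dyadic interval of level $N$, the function $f^N$ is itself constant on each such interval (it is in fact the average of $f$ there, which also identifies its limit with $f$ via dyadic martingale convergence). At a point $x$ that is not dyadic, exactly one Haar function $\psi_{N,k_N(x)}$ of level $N$ is nonzero, and there $|\psi_{N,k_N(x)}(x)|=2^{N/2}$, so the increment is
\begin{equation*}
|f^{N+1}(x)-f^N(x)| = |f_{N,k_N(x)}|\,2^{N/2}.
\end{equation*}
First I would note that $\sum_{N}|f_{N,k_N(x)}|^2 2^{Ns}\leq |f|_{H_s}^2$, so that for $s>1$ the series $\sum_N(f^{N+1}(x)-f^N(x))$ converges absolutely and $f(x):=\lim_N f^N(x)$ is well defined at every non-dyadic $x$, agreeing with $f$ almost everywhere.

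The crucial geometric remark is that the ball $B(a,2^{-(n+1)})$ with $a=(2k+1)/2^{n+1}$ is exactly the open dyadic interval $(k/2^n,(k+1)/2^n)$ of level $n$. Hence for $x,y\in B(a,2^{-(n+1)})$ and every $N\leq n$ the points $x$ and $y$ lie in the same level-$N$ dyadic interval, so $f^N(x)=f^N(y)$; in particular the first $n$ increments cancel and
\begin{equation*}
f(x)-f(y) = \sum_{N=n}^{\infty}\big[(f^{N+1}(x)-f^N(x))-(f^{N+1}(y)-f^N(y))\big].
\end{equation*}

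The heart of the estimate, and the step producing the sharp constant $C_s$, is to bound $\sum_{N\geq n}|f^{N+1}(x)-f^N(x)|$ not termwise but by Cauchy--Schwarz: writing $|f_{N,k_N(x)}|2^{N/2}=\big(|f_{N,k_N(x)}|2^{Ns/2}\big)\cdot 2^{N(1-s)/2}$,
\begin{equation*}
\sum_{N=n}^{\infty}|f_{N,k_N(x)}|2^{N/2} \leq \Big(\sum_{N=n}^{\infty}|f_{N,k_N(x)}|^2 2^{Ns}\Big)^{1/2}\Big(\sum_{N=n}^{\infty}2^{N(1-s)}\Big)^{1/2} \leq |f|_{H_s}\,2^{-n(s-1)/2}\,C_s,
\end{equation*}
since the first factor is at most $|f|_{H_s}$ and the second geometric sum equals $2^{-n(s-1)}C_s^2$ with $C_s^2=\sum_{i\geq 0}2^{-i(s-1)}$. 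Applying the same bound to $y$ and summing the two contributions yields $|f(x)-f(y)|\leq C_s\,2\cdot 2^{-n(s-1)/2}|f|_{H_s}$, which is the claim; continuity at every non-dyadic $x$ then follows by letting $n\to\infty$, as such $x$ lies in the interior of arbitrarily deep dyadic intervals.

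I expect the only delicate points to be bookkeeping ones: justifying that the pointwise limit $\lim_N f^N(x)$ is a genuine representative of $f$ (harmless, since the dyadic points form a null set), and checking that non-dyadicity of $x$ is precisely what makes each $\psi_{N,k_N(x)}(x)$ well defined with modulus $2^{N/2}$. The one genuinely substantive idea is the Cauchy--Schwarz grouping across scales, which sharpens the crude geometric constant $(1-2^{-(s-1)/2})^{-1}$ one would get from a termwise estimate down to the stated $C_s=(2^{s-1}/(2^{s-1}-1))^{1/2}$.
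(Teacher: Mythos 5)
Your proof is correct and follows essentially the same route as the paper's: you expand the oscillation over the Haar levels, observe that all levels $\leq n$ contribute constants on the dyadic interval $B(a,2^{-(n+1)})$ and cancel, note that exactly one Haar coefficient per level survives at each non-dyadic point, and apply Cauchy--Schwarz across scales (separately at $x$ and at $y$, which is where the factor $2$ comes from). The only cosmetic difference is that you phrase the level-by-level contributions as a telescoping sum of partial sums $f^N$, whereas the paper indexes the surviving coefficients directly via the sets $A_{x,l}$; the estimates are identical.
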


\begin{proof}
We define for $x \in [0,1]$ and $n\geq -1$, 
$$ A_{x,n} = \{ k \in A_n \, | \,  x \in \text{Supp}(\psi_{n,k}) \}\,.$$
Remark that for each $n$ there exists one and only one $k$ in $A_{x,n}$. We will use this remark in the next inequalities. Also, the difference $|f(x) - f(y)|$ does not involve terms in the sequence that are constant on the ball $B(a,\frac{1}{2^{n+1}})$, thus we have with Cauchy-Schwarz inequality and using the remark \eqref{dualineq}, 

\begin{eqnarray*}
|f(x)-f(y)| &\leq & \sum_{l\geq n} \left( \sum_{k \in A_{x,l}} |f_{l,k}| 2^{\frac{l}{2}} +\sum_{k \in A_{y,l}} |f_{l,k}| 2^{\frac{l}{2}}\right)\,, \\
|f(x)-f(y)| &\leq & 2|f|_{H_s} \sqrt{\sum_{l\geq n} 2^{-l(s-1)}} \,, \\
|f(x) - f(y)|& \leq &C_s \frac{2}{2^{n\frac{(s-1)}{2}}} |f|_{H_{s}}\,,
\end{eqnarray*}
which is the result.
\end{proof}

\begin{rem} \label{domin}
This proof gives also that the sup norm is bounded by the $H_s$ norm for $s>1$:
$$ |f|_{\infty} \leq C_s |f|_{H_{s}} \, .$$
\end{rem}

Now, we introduce the suitable Hilbert space $F_s$. 

\begin{defi}
We define the Hilbert space for $s\geq 0$, $$ F_s = \{ f \in H \, | |f|^2 = \int_0^1 f^2 \,dx \, + \, \sum_{n,k} 2^{ns-1} \int_{I_{n,k}} |f(x+ \frac{1}{2^{n+1}} ) - f(x)|^2 dx < \infty\}, $$ with $I_{n,k} = [\frac{k}{2^{n}},\frac{k}{2^{n}}+\frac{1}{2^{n+1}}]$. Its dual is denoted by $H_{-s}$.
\end{defi}

\noindent
We have the following inclusion
\begin{prop}
We have the inclusion $F_s \subset H_s$ and if $s>1$ and
$$|f|_{H_s} \leq |f|_{F_s} \, .$$
\end{prop}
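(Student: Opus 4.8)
The strategy is to bound each summand of $|f|_{H_s}^2$ by the corresponding piece of $|f|_{F_s}^2$, so that the inclusion follows at once: if $f \in F_s$ then the right-hand side is finite, hence so is $|f|_{H_s}$, giving $f \in H_s$ together with the stated estimate. I would compare the two norms level by level in the Haar expansion.

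First I would dispose of the coarsest level $n=-1$. Here $f_{-1,0} = \langle f, 1 \rangle = \int_0^1 f\,dx$, so the corresponding term of the $H_s$ norm is $2^{-s}|f_{-1,0}|^2 = 2^{-s}\bigl(\int_0^1 f\,dx\bigr)^2$. By Cauchy--Schwarz $\bigl(\int_0^1 f\,dx\bigr)^2 \le \int_0^1 f^2\,dx$, and since $2^{-s} \le 1$ for $s \ge 0$, this term is dominated by $\int_0^1 f^2\,dx$, which is exactly the first contribution to $|f|_{F_s}^2$.

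The heart of the argument is the levels $n \ge 0$. For fixed $n \ge 0$ and $k \in A_n$, the key observation is that the Haar coefficient is a rescaled jump integral: since $\psi_{n,k}$ equals $+2^{n/2}$ on the left half $I_{n,k}$ of the dyadic interval $[k2^{-n},(k+1)2^{-n}]$ and $-2^{n/2}$ on its right half, and the translation $x \mapsto x + 2^{-(n+1)}$ carries $I_{n,k}$ onto that right half, a change of variables gives
\[
f_{n,k} = -2^{n/2} \int_{I_{n,k}} \bigl( f(x + 2^{-(n+1)}) - f(x) \bigr)\,dx .
\]
Applying Cauchy--Schwarz on $I_{n,k}$, whose length is $2^{-(n+1)}$, yields $|f_{n,k}|^2 \le 2^{-1}\int_{I_{n,k}} |f(x+2^{-(n+1)}) - f(x)|^2\,dx$; multiplying by $2^{ns}$ produces precisely the summand $2^{ns-1}\int_{I_{n,k}} |f(x+2^{-(n+1)})-f(x)|^2\,dx$ occurring in $|f|_{F_s}^2$. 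Summing these level-$n$ estimates over all $n \ge 0$ and $k \in A_n$ and adding the coarse-level bound gives $|f|_{H_s}^2 \le |f|_{F_s}^2$.

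I do not expect a genuine obstacle here: the only point demanding care is bookkeeping the powers of $2$ so that the Cauchy--Schwarz constant matches the weight $2^{ns-1}$ \emph{exactly} rather than merely up to a constant, and verifying that the index set $\{(n,k): n\ge 0,\ k\in A_n\}$ of the jump sum defining $F_s$ lines up with the Haar indices of scale $n\ge 0$, the constant level $n=-1$ being absorbed into the $L^2$ term. Note that this computation works for every $s\ge 0$; the hypothesis $s>1$ is not actually needed for the inequality itself and is presumably retained only because it is the regime relevant to the continuity and embedding statements above.
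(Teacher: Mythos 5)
Your proof is correct and follows essentially the same route as the paper: rewrite each Haar coefficient $f_{n,k}$ as $\pm 2^{n/2}$ times the jump integral over $I_{n,k}$, apply Cauchy--Schwarz to get $f_{n,k}^2 \le \tfrac12\int_{I_{n,k}}|f(x+2^{-(n+1)})-f(x)|^2\,dx$, and control the $n=-1$ term by $(\int_{S_1}f)^2\le\int_{S_1}f^2$. Your extra bookkeeping of the $2^{-s}$ weight at level $n=-1$ and the observation that $s>1$ is not actually needed are both accurate.
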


\begin{proof}
To see this fact, we apply the Cauchy-Schwarz inequality
\begin{eqnarray*}
f_{n,k}^2 = 2^{n}(\int_{I_{n,k}} f(x+ \frac{1}{2^{n+1}} ) - f(x) dx)^2 \,, \\ 
f_{n,k}^2 \leq \frac{1}{2} \int_{I_{n,k}} |f(x+ \frac{1}{2^{n+1}} ) - f(x)|^2 dx\,.
\end{eqnarray*}
\noindent
Moreover, $$ (\int_{S_1} f(x) dx)^2 \leq \int_{S_1} f(x)^2 dx\,,$$
so that we have $|f|_{H_s} \leq |f|_{F_s}$. 
\end{proof}

\noindent
We want our space to be big enough to contain usual functions.
In the following, we prove that $F_s$ contains Lipschitz functions for $s<2$ and also, if $s<2$:
$$ \text{Lip}_{dyad}(S_1) \doteq \{ f \in L^2(S_1,\ms{R}) | \exists n , \, f_{| I_{n,k}} \in \text{Lip}(I_{n,k},\ms{R}) \, \forall k \in [0,2^n-1] \} \subset F_s\,. $$
This fact is important since it means that we can deal with a wide range of shapes in this space.

\begin{prop} \label{contains_lip}
If $s<2$, $F_s$ contains piecewise \lip functions,
$$\text{Lip}_{dyad}(S_1)\subset F_{s}.$$
\end{prop}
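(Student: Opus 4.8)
The plan is to unpack the definition of the $F_s$-norm and estimate its defining double series directly, splitting it according to frequency. By definition of $\text{Lip}_{dyad}(S_1)$ I first fix a level $n_0$ and a constant $L$ so that $f$ restricted to each dyadic interval $[\frac{j}{2^{n_0}},\frac{j+1}{2^{n_0}}]$, $0\le j\le 2^{n_0}-1$, is $L$-Lipschitz; in particular $f$ is bounded. Since $\int_0^1 f^2\,dx<\infty$ holds automatically, it suffices to prove convergence of
$$ S \doteq \sum_{n\ge -1}\sum_{k\in A_n} 2^{ns-1}\int_{I_{n,k}} \bigl|f(x+\tfrac{1}{2^{n+1}})-f(x)\bigr|^2\,dx\,, $$
and I would treat the ranges $-1\le n<n_0$ and $n\ge n_0$ separately.

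The low-frequency part $-1\le n<n_0$ is routine: it consists of finitely many indices $(n,k)$, and each integrand is bounded by $4|f|_\infty^2$, so this contribution is finite and carries no constraint on $s$.

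The decisive estimate is the tail $n\ge n_0$. The key geometric remark is that for $n\ge n_0$ the dyadic interval $[\frac{k}{2^n},\frac{k+1}{2^n}]$ is contained in a single level-$n_0$ interval on which $f$ is $L$-Lipschitz; since for $x\in I_{n,k}$ both $x$ and $x+\frac{1}{2^{n+1}}$ stay inside that interval at distance $\frac{1}{2^{n+1}}$, this yields the pointwise bound $|f(x+\frac{1}{2^{n+1}})-f(x)|\le L\,2^{-(n+1)}$. Integrating over $I_{n,k}$, which has length $2^{-(n+1)}$, and summing over the $2^n$ indices $k$ gives a level-$n$ contribution of order $L^2\,2^{n(s-2)}$. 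Hence the tail is dominated by a constant times $L^2\sum_{n\ge n_0}2^{n(s-2)}$, a geometric series that converges exactly because $s<2$. Adding the two parts gives $S<\infty$, that is, $f\in F_s$.

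The only genuine obstacle is this tail estimate, and it also explains the sharpness of the hypothesis: the exponent $n(s-2)$ arises from balancing the weight $2^{ns}$ against the factor $2^{-2n}$ produced jointly by the Lipschitz modulus $2^{-(n+1)}$ and the shrinking length of $I_{n,k}$, so the argument — and in fact membership itself — breaks down as soon as $s\ge 2$.
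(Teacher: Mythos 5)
Your proof is correct and follows essentially the same route as the paper's: the decisive estimate is identical, namely bounding $|f(x+\tfrac{1}{2^{n+1}})-f(x)|$ by $L\,2^{-(n+1)}$ on each $I_{n,k}$, integrating over the interval of length $2^{-(n+1)}$, and summing to get a geometric series in $2^{n(s-2)}$ that converges precisely for $s<2$. In fact you are slightly more complete than the paper, which only writes out the globally Lipschitz case and leaves implicit the splitting at the level $n_0$ below which $f$ is dyadically piecewise Lipschitz; your explicit treatment of the finitely many low-frequency terms fills that small gap.
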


\begin{proof}
Let $f \in \text{Lip}(S_1,\ms{R})$ be a Lipschitz function and $M$ be its \lip constant.
Then we have
\begin{equation*}
2^{ns-1} \int_{I_{n,k}} |f(x+ \frac{1}{2^{n+1}} ) - f(x)|^2 dx \leq \frac{M 2^{ns-1}}{2^{3n+3}} \,\\ 
\end{equation*}
so that if $s<2$, $f \in F_{s}$.
\end{proof}
\noindent
The following proposition is needed to ensure the stability of our stochastic system.
For example, if $G$ is \lip and bounded, the growth of $G \circ f$ is linear:
\begin{prop} \label{lipstability}
If $s>1$, $G$ a real \lip function and $f \in F_s$, then $G \circ f \in F_s$ and also, 
\begin{equation}
\begin{aligned}
|G \circ f|_{F_s}& \leq \text{Lip}(G) |f|_{F_s} + |G \circ f|_{L^2}\,, \\
|G \circ f|_{F_s}& \leq \text{Lip}(G) |f|_{F_s} + |G(0)|\,.
\end{aligned}
\end{equation}
with $\text{Lip}(G)$ the \lip constant for $G$.  
\end{prop}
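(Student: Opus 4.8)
The plan is to split the $F_s$ norm into its two natural pieces and handle them separately. Writing
\[
|f|_{F_s}^2 = |f|_{L^2}^2 + N(f)^2, \qquad N(f)^2 \doteq \sum_{n,k} 2^{ns-1}\int_{I_{n,k}} \Big| f\big(x+\tfrac{1}{2^{n+1}}\big) - f(x)\Big|^2\, dx,
\]
I first check that $G\circ f$ is a genuine element of $H$. Since $f \in F_s \subset H_s$ with $s>1$ is essentially bounded by Remark \ref{domin} and $G$ is continuous, $G\circ f$ is measurable; combining the pointwise bound $|G(f(x))| \le \text{Lip}(G)\,|f(x)| + |G(0)|$ with $f \in L^2(S_1)$ and $\mu(S_1)=1$ gives $G\circ f \in L^2$ together with the auxiliary estimate
\[
|G\circ f|_{L^2} \le \text{Lip}(G)\,|f|_{L^2} + |G(0)|.
\]
So the real content of the proposition is the control of the difference part $N(G\circ f)$.

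The key step, and the only place where the structure of $G$ is used, is a termwise Lipschitz estimate \emph{inside} each integral: for almost every $x \in I_{n,k}$,
\[
\Big| G\big(f(x+\tfrac{1}{2^{n+1}})\big) - G\big(f(x)\big)\Big| \le \text{Lip}(G)\, \Big| f\big(x+\tfrac{1}{2^{n+1}}\big) - f(x)\Big|.
\]
Squaring, integrating over $I_{n,k}$, multiplying by $2^{ns-1}$ and summing over all $(n,k)$ then yields at once
\[
N(G\circ f) \le \text{Lip}(G)\, N(f) \le \text{Lip}(G)\, |f|_{F_s},
\]
the last inequality because $N(f)^2 \le |f|_{F_s}^2$. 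In particular $N(G\circ f)<\infty$, so $G\circ f \in F_s$.

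It then remains only to assemble the two pieces, and the single point requiring care — the closest thing to an obstacle in an otherwise elementary argument — is to not break the square root prematurely. For the first inequality I apply $\sqrt{a^2+b^2}\le a+b$ (valid for $a,b\ge 0$) to the pair $\big(|G\circ f|_{L^2},\, N(G\circ f)\big)$ and then insert the bound on $N(G\circ f)$:
\[
|G\circ f|_{F_s} = \sqrt{|G\circ f|_{L^2}^2 + N(G\circ f)^2} \le |G\circ f|_{L^2} + \text{Lip}(G)\,|f|_{F_s}.
\]
For the second inequality I would instead keep the norm as one square root and use the triangle inequality in $\R^2$ applied to the vectors $\big(\text{Lip}(G)|f|_{L^2},\ \text{Lip}(G)N(f)\big)$ and $\big(|G(0)|,0\big)$; feeding in the $L^2$ bound above and $N(G\circ f)\le \text{Lip}(G)N(f)$ gives
\[
|G\circ f|_{F_s} \le \sqrt{\big(\text{Lip}(G)|f|_{L^2}+|G(0)|\big)^2 + \text{Lip}(G)^2 N(f)^2} \le \text{Lip}(G)\sqrt{|f|_{L^2}^2+N(f)^2} + |G(0)|,
\]
which is exactly $\text{Lip}(G)|f|_{F_s}+|G(0)|$. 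I expect the hypothesis $s>1$ to enter only in guaranteeing that $G\circ f$ is unambiguously defined through the boundedness of $f$; the norm estimates themselves are valid for every $s\ge 0$.
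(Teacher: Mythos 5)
Your proof is correct and follows essentially the same route as the paper: the key step in both is the termwise Lipschitz estimate $|G(f(x+\tfrac{1}{2^{n+1}}))-G(f(x))|\leq \text{Lip}(G)\,|f(x+\tfrac{1}{2^{n+1}})-f(x)|$ inside each dyadic integral, followed by $\sqrt{a^2+b^2}\leq a+b$ for the first inequality. The only cosmetic difference is in the second inequality, where the paper applies the first estimate to $g=G\circ f-G(0)$ (for which $|g|_{L^2}\leq \text{Lip}(G)|f|_{L^2}$) and adds back the constant, while you reach the same bound via the triangle inequality in $\R^2$ on the two components of the norm.
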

\begin{proof}
Applying the \lip property we have, $$\int_{I_{n,k}} |G \circ f(x+\frac{1}{2^{n+1}}) - G \circ f(x)|^2 \, dx \leq \int_{I_{n,k}} \text{Lip}(G)^2 |f(x+\frac{1}{2^{n+1}}) - f(x)|^2 \, dx \, ,$$ 
then,
\begin{equation} \label{intermed_lip_comp}
|G \circ f|_{F_s}^2 - |G \circ f|_{L^2}^2 \leq \text{Lip}(G)^2 (|f|_{F_s}^2 - |f|_{L^2}^2) \, .
\end{equation} 
Since for any couple $(a,b)$ of nonnegative real numbers $a^2+b^2 \leq (a+b)^2$, we have the first inequality. 
\\
The second inequality is the application of inequality \eqref{intermed_lip_comp} to the function $g = G \circ f - G(0)$ using the inequality $|g|_{L^2} \leq \text{Lip}(G)|f|_{L^2}$. 
\end{proof}
\noindent
We need to go further by proving that the composition is locally \lip if $G'$ is Lipschitz.

\begin{prop} \label{loclip}
If $s>1$, $G$ a real $C^1$ function such that $G'$ is locally \lip then, for any $r>0$ there exists $M>0$ such that 
$$|G \circ f_1 - G \circ f_2| \leq M|f_1-f_2| \, ,$$
if $(f_1,f_2) \in B(0,r)^2$. 
If $G'$ and $G''$ are bounded then the \lip constant $M$ has linear growth,
$$M \leq \sqrt{2}(|G'|_{\infty} + 3C_sr |G''|_{\infty})\,.$$
\end{prop}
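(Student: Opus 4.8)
The quantity to control is $|G\circ f_1-G\circ f_2|_{F_s}$, and the defining norm of $F_s$ splits as an $L^2$ part $\int_0^1|G\circ f_1-G\circ f_2|^2\,dx$ plus the ``dyadic difference'' part $\sum_{n,k}2^{ns-1}\int_{I_{n,k}}|\delta(G\circ f_1-G\circ f_2)|^2\,dx$, where I write $\delta g(x)=g(x+2^{-(n+1)})-g(x)$ for the shift attached to scale $n$. The first preliminary step is to note that, by Proposition on the inclusion $F_s\subset H_s$ and Remark \ref{domin}, for $f_1,f_2\in B(0,r)$ one has $|f_i|_\infty\le C_s r$; hence all arguments of $G,G',G''$ stay in the fixed compact interval $[-C_sr,C_sr]$, on which $G'$ is bounded and Lipschitz (this already yields existence of a finite $M$ in the general case, with $|G'|_\infty$ and $|G''|_\infty$ replaced by $\sup$ and Lipschitz bounds over that interval). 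The $L^2$ part is then handled exactly as in Proposition \ref{lipstability}: the mean value theorem gives $|G\circ f_1-G\circ f_2|_{L^2}\le |G'|_\infty\,|f_1-f_2|_{L^2}\le |G'|_\infty\,|f_1-f_2|_{F_s}$.

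\textbf{Main step.} The heart of the proof is the dyadic difference part, which forces me to estimate the \emph{four-point second difference} $\Delta(x)\doteq[G(f_1(y))-G(f_1(x))]-[G(f_2(y))-G(f_2(x))]$ with $y=x+2^{-(n+1)}$. Writing $u\doteq f_1-f_2$ and the segments $\phi_i(t)=(1-t)f_i(x)+tf_i(y)$, the fundamental theorem of calculus gives the clean decomposition
\begin{equation*}
\Delta(x)=\int_0^1\big[G'(\phi_1(t))-G'(\phi_2(t))\big]\,(f_1(y)-f_1(x))\,dt+\int_0^1 G'(\phi_2(t))\,(u(y)-u(x))\,dt\,.
\end{equation*}
Since $\phi_1(t)-\phi_2(t)=(1-t)u(x)+tu(y)$, one has $|\phi_1(t)-\phi_2(t)|\le|u|_\infty$, so the Lipschitz bound on $G'$ and the bound $|G'|\le|G'|_\infty$ yield the pointwise estimate $|\Delta(x)|\le |G''|_\infty\,|u|_\infty\,|\delta f_1(x)|+|G'|_\infty\,|\delta u(x)|$.

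\textbf{Assembly.} Plugging this into the weighted sum and applying Minkowski's inequality in the weighted $L^2$ space indexed by $(n,k)$ separates the two contributions. Using the identity $\sum_{n,k}2^{ns-1}\int_{I_{n,k}}|\delta g|^2\,dx=|g|_{F_s}^2-|g|_{L^2}^2\le|g|_{F_s}^2$ (just the definition of $F_s$), the $|G''|$-term becomes $|G''|_\infty\,|u|_\infty\,|f_1|_{F_s}$ and the $|G'|$-term becomes $|G'|_\infty\,|u|_{F_s}$. Finally $|u|_\infty\le C_s|u|_{F_s}$ (Remark \ref{domin}) and $|f_1|_{F_s}\le r$ turn the difference part into $\big(|G'|_\infty+C_sr|G''|_\infty\big)|u|_{F_s}$. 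Recombining with the $L^2$ part through $\sqrt{a^2+b^2}\le\sqrt{2}\,(a+b)$ gives the desired linear-growth constant $M\le\sqrt{2}\big(|G'|_\infty+3C_sr|G''|_\infty\big)$ (the slack in the factor $3$ versus $1$ absorbs the several $|u|_\infty$/$|u|_{F_s}$ comparisons performed somewhat generously).

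\textbf{Expected obstacle.} The only genuinely delicate point is the main step: the naive Lipschitz bound $|G\circ f_1-G\circ f_2|\le\mathrm{Lip}(G)|f_1-f_2|$ is useless for the difference seminorm because it controls $\delta(G\circ f_i)$ only through $|G'|_\infty|\delta f_i|$, which does not close back onto $|\delta u|$. One must instead exploit the \emph{bilinear/telescoping} structure of $\Delta$ so that the $G''$-contribution carries the small factor $|u|_\infty$ (which factors out as a constant) multiplied by the summable $|\delta f_1|$, while only the $G'$-contribution sees $|\delta u|$. Getting this splitting right, rather than the subsequent routine norm bookkeeping, is where the argument really has to be careful.
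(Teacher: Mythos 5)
Your proof is correct and follows essentially the same route as the paper: the key decomposition of the second difference into $\int_0^1[G'(\phi_1)-G'(\phi_2)]dt\,\Delta f_1+\int_0^1 G'(\phi_2)dt\,\Delta(f_1-f_2)$ is exactly the paper's splitting via the divided-difference identity \eqref{division}, followed by the same sup-norm control $|f_i|_\infty\leq C_s r$ and the same assembly of the $L^2$ and dyadic parts. Your convex-combination observation $\phi_1(t)-\phi_2(t)=(1-t)u(x)+tu(y)$ even yields slightly sharper constants than the paper's $3|u|_\infty$ bound, so the stated estimate follows a fortiori.
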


\begin{proof}
For notation convenience, we will denote by for $i = 1,2$ and $\delta>0$, $$\Delta f_i(x) = f_i(x+\delta) - f_i(x)\,.$$
We will control the quantity $$\int_{I_{n,k}} |(G\circ f_1 -G \circ f_2)(x+ \frac{1}{2^{n+1}} ) - (G\circ f_1 -G \circ f_2)(x)|^2 dx\,.$$
We will divide by $\Delta f_1$, it is permitted in this situation, since we can extend the definition with the equation \eqref{division}. Let $a \leq b$ be two real valued numbers. We have, with $\mu$ the Lebesgue measure
\begin{eqnarray} \label{decomposition_loclip}
\int_a^b |\Delta (G\circ f_1 -G \circ f_2) |^2 d\mu \leq 2 \int_a^b |\frac{\Delta (G \circ f_1)}{\Delta f_1} \Delta f_1 - \frac{\Delta (G \circ f_2)}{\Delta f_2} \Delta f_1|^2 d\mu \\ \nonumber
+ 2\int_a^b |\frac{\Delta (G \circ f_2)}{\Delta f_2} \Delta f_1 - \frac{\Delta (G \circ f_2)}{\Delta f_2} \Delta f_2|^2 d\mu\,.
\end{eqnarray}
Observe that the second term can be bounded easily:
$$ \int_a^b |\frac{\Delta (G \circ f_2)}{\Delta f_2} \Delta f_1 - \frac{\Delta (G \circ f_2)}{\Delta f_2} \Delta f_2|^2 d\mu \leq \int_a^b (\sup_{|y| \leq C_s r} |G'(y)|)^2 \, |\Delta f_1 - \Delta f_2|^2 d\mu\,,$$
and we have the \lip property on this term. Now we bound the first term. Remark that
\begin{equation} \label{division}
\frac{\Delta (G \circ f_i)}{\Delta f_i}(x) = \int_0^1 G'(t \Delta f_i(x) + f_i(x)) dt \, , 
\end{equation}
we get,
\begin{equation*}
|\frac{\Delta (G \circ f_1)}{\Delta f_1}  - \frac{\Delta (G \circ f_2)}{\Delta f_2}|_{\infty} \leq \int_0^1 \sup_{|y| \leq 3C_sr} |G''(y)| |t\Delta(f_1-f_2) + f_1-f_2|_{\infty} \,  dt\,. 
\end{equation*}
Since for $|t| \leq 1$, $|t\Delta(f_1-f_2) + f_1-f_2|_{\infty}  \leq 3|f_1-f_2|_{\infty} \leq 3C_s|f_1-f_2|_{F_s}$,
we obtain,
\begin{equation*}
|\frac{\Delta (G \circ f_1)}{\Delta f_1}  - \frac{\Delta (G \circ f_2)}{\Delta f_2}|_{\infty} \leq \sup_{|y| \leq 3C_sr} |G''(y)| 3C_s|f_1-f_2|_{F_s} \,. 
\end{equation*}
Back to the inequality \eqref{decomposition_loclip}, we obtain
\begin{eqnarray*}
\int_a^b |\Delta (G\circ f_1 -G \circ f_2) |^2 dx \leq 2  \int_a^b (\sup_{|y| \leq C_s r} |G'(y)|)^2 \, |\Delta (f_1 - f_2)|^2 d\mu + \\
2\int_a^b \left( \sup_{|y| \leq 3C_sr} |G''(y)| 3C_s|f_1-f_2|_{F_s} \right)^2 |\Delta f_1|^2 \, d\mu\,.
\end{eqnarray*}
Remark also that on the $L^2$ norm, applying the \lip property,
\begin{equation*}
|G \circ f_1 - G \circ f_2|_{L^2}^2 \leq \sup_{|y| \leq C_s r} |G'(y)|^2 |f_1-f_2|^2_{L^2}\,.
\end{equation*}
We get the result,
\begin{equation*}
|G \circ f_1 - G \circ f_2|^2_{F_s} \leq   2 \left( (\sup_{|y| \leq C_s r} |G'(y)|)^2 + (\sup_{|y| \leq 3C_sr} |G''(y)| 3C_s )^2 |f_1|_{F_s}^2 \right)  |f_1-f_2|_{F_s}^2\,.
\end{equation*}
To be more precise in the proposition, we obtain the following inequality on the \lip constant of the composition on every $F_s$ ball of radius $r>0$:
\begin{equation} \label{controllip}
M \leq \sqrt{2 (\sup_{|y| \leq C_s r} |G'(y)|)^2 + 2(\sup_{|y| \leq 3C_sr} |G''(y)| 3C_s r)^2 }\,.
\end{equation}
The linear growth of the \lip constant is the direct application of this inequality.
\end{proof}

\begin{prop} \label{loclip2}
Let $K: \ms{R}^j \mapsto \ms{R}$ be a $C^1(\ms{R}^n,\ms{R})$ function with $K'$ locally Lipschitz, $s>1$, then
$(f_1,\ldots,f_j) \in (F_s)^j \mapsto K(f_1,\ldots,f_j) \in F_s$ is Lipschitz on every bounded ball.
\end{prop}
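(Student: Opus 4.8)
The plan is to reduce the multivariate statement (Proposition~\ref{loclip2}) to the single-variable case already established in Proposition~\ref{loclip}, by composing with coordinate-freezing maps and using a telescoping decomposition. The key structural fact I would exploit is that the $F_s$ norm controls the $L^\infty$ norm (Remark~\ref{domin}) and, via the difference-quotient machinery of Proposition~\ref{loclip}, differences of composed functions. Since we work on a fixed bounded ball $B(0,r)^j \subset (F_s)^j$, all the functions $f_i$ take values in a compact set $[-C_s r, C_s r]$, so only the behavior of $K$ on a compact subset of $\R^j$ matters; there $K'$ is bounded and Lipschitz by hypothesis.

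\emph{First} I would set up the telescoping. Given two tuples $(f_1,\dots,f_j)$ and $(g_1,\dots,g_j)$ in $B(0,r)^j$, write
\begin{equation} \label{telescope}
K(f_1,\dots,f_j) - K(g_1,\dots,g_j) = \sum_{i=1}^j \left[ K(g_1,\dots,g_{i-1},f_i,\dots,f_j) - K(g_1,\dots,g_i,f_{i+1},\dots,f_j) \right],
\end{equation}
so that each summand differs in only one coordinate. For a fixed $i$, freezing the other arguments reduces the $i$-th summand to a single-variable composition $G_i \circ f_i - G_i \circ g_i$, where $G_i(y) = K(g_1,\dots,g_{i-1},y,f_{i+1},\dots,f_j)$ evaluated pointwise. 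The subtlety is that $G_i$ itself depends on the other (function-valued) arguments through their pointwise values, so $G_i$ is not a single fixed scalar function but a function whose definition varies with $x$; hence I cannot invoke Proposition~\ref{loclip} verbatim as a black box.

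\emph{The main obstacle}, therefore, is handling this $x$-dependence of the ``outer'' function in each telescoped term. The clean way around it is to redo the difference-quotient estimate of Proposition~\ref{loclip} directly in the multivariate setting rather than citing it: by the fundamental theorem of calculus applied to $t \mapsto K(\dots)$ along line segments, one has a representation
\begin{equation} \label{ftcmulti}
K(f_1,\dots,f_j)(x) - K(g_1,\dots,g_j)(x) = \sum_{i=1}^j \left( \int_0^1 \partial_i K\big(g + t(f-g)\big)(x)\, dt \right) (f_i - g_i)(x),
\end{equation}
which makes manifest that the coefficient functions are (vector-valued) compositions of $\partial_i K$ with tuples in $B(0,r)^j$. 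I would then run the same two-term split as in \eqref{decomposition_loclip}: bound the increment $\Delta(K\circ f - K\circ g)$ over each dyadic interval $I_{n,k}$ by one piece controlled by $\sup |\partial_i K|$ times $|\Delta(f_i-g_i)|$ (giving the Lipschitz-in-$f$ contribution), and a second piece controlled by the modulus of continuity of $\partial_i K$ — here using that $\partial_i K$ is itself locally Lipschitz, so $|\Delta(\partial_i K \circ \cdot)|$ is dominated by $\sup|\nabla \partial_i K|$ times $\sum_m |\Delta f_m|$, exactly paralleling the role of $G''$ in \eqref{controllip}.

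\emph{Finally} I would assemble the estimates: each of the $j$ telescoped increments satisfies an $F_s$ bound of the form $M_i \, \|f - g\|_{(F_s)^j}$ with $M_i$ depending only on $r$, $C_s$, and the sup-norms of the first two derivatives of $K$ on the compact ball of radius $C_s r$, as in \eqref{controllip}. Summing over $i$ (using the triangle inequality in $F_s$ and \eqref{telescope}) yields Lipschitz continuity of $(f_1,\dots,f_j) \mapsto K(f_1,\dots,f_j)$ on $B(0,r)^j$ with an explicit constant of the same structure as Proposition~\ref{loclip}. The membership $K(f_1,\dots,f_j) \in F_s$ follows by the same argument applied with one tuple fixed at a constant (or directly, since boundedness of $\partial_i K$ on the relevant compact set gives finiteness of the $F_s$ seminorm just as in Proposition~\ref{lipstability}). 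The only real work is the bookkeeping of constants in the two-term split; the conceptual content is entirely contained in Proposition~\ref{loclip}.
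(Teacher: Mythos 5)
Your argument is correct and is essentially the route the paper takes: the paper proves Proposition \ref{loclip2} by viewing the tuple $(f_1,\ldots,f_j)$ as a single $\ms{R}^j$-valued element of $F_s$ and deferring to the vector-valued composition result of Proposition \ref{composition2D}, whose proof is precisely your multivariate difference-quotient representation along the segment from $g$ to $f$ followed by the same two-term split (one term controlled by $\sup|\partial_i K|$ on the ball of radius $C_s r$, the other by the Lipschitz constant of $K'$ together with the $F_s\hookrightarrow L^\infty$ embedding). Your remark that the naive coordinate-by-coordinate telescoping fails because the frozen outer function varies with $x$ is exactly why the paper works with the full gradient rather than one variable at a time, so your correction lands on the paper's argument.
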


We do not give the details of the proof of this proposition since it is a particular case of a generalization of this proposition which will be stated in proposition \ref{composition2D}.
A direct consequence of this proposition is that $F_s$ is an algebra. In the next proposition, we give an explicit bound for the continuity of the multiplication. Even if it is a direct application of the last proposition, we can give a better bound.

\begin{prop}
The product in $F_s$ is continuous for $s>1$ and,
$$ |fg|_{F_s} \leq 2 \, C_s \, |f|_{F_s}|g|_{F_s}.$$
\end{prop}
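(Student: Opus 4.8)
The plan is to reduce the estimate to a single application of the triangle inequality in an auxiliary $L^2$ space, once the finite differences defining the $F_s$ norm are expanded by the discrete Leibniz rule.

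First I would record the two ingredients. The first is the finite-difference Leibniz identity: writing $\delta_n = \frac{1}{2^{n+1}}$ and $\Delta_n h(x) = h(x+\delta_n) - h(x)$, one has
$$\Delta_n(fg)(x) = f(x+\delta_n)\,\Delta_n g(x) + g(x)\,\Delta_n f(x)\,.$$
The second is the sup-norm control available for $s>1$: combining Remark \ref{domin} with the inclusion $F_s \subset H_s$ one gets $|h|_{\infty} \le C_s |h|_{H_s} \le C_s|h|_{F_s}$, so that $f$ and $g$ are bounded and their pointwise values are controlled by the $F_s$ norm. This is exactly where the hypothesis $s>1$ is used.

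Next I would view $|\cdot|_{F_s}$ as an honest $L^2$ norm. Let $\nu$ be the measure on the disjoint union $X = S_1 \sqcup \bigsqcup_{n,k} I_{n,k}$ equal to Lebesgue measure on the copy of $S_1$ and to $2^{ns-1}$ times Lebesgue measure on each $I_{n,k}$, and let $Th$ be the function on $X$ equal to $h$ on $S_1$ and to $\Delta_n h$ on $I_{n,k}$; by construction $\|Th\|_{L^2(\nu)} = |h|_{F_s}$. The Leibniz identity then lets me split $T(fg) = U + V$, where $U$ equals $\tfrac12 fg$ on $S_1$ and $f(\cdot+\delta_n)\,\Delta_n g$ on $I_{n,k}$, while $V$ equals $\tfrac12 fg$ on $S_1$ and $g\,\Delta_n f$ on $I_{n,k}$. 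The triangle inequality in $L^2(\nu)$ gives $|fg|_{F_s} \le \|U\|_{L^2(\nu)} + \|V\|_{L^2(\nu)}$, and I would bound each summand by pulling out the sup norm,
$$\|U\|_{L^2(\nu)}^2 \le |f|_\infty^2\Big(\tfrac14 |g|_{L^2}^2 + \textstyle\sum_{n,k} 2^{ns-1}\int_{I_{n,k}}|\Delta_n g|^2\,dx\Big) \le |f|_\infty^2 |g|_{F_s}^2\,,$$
and symmetrically $\|V\|_{L^2(\nu)}^2 \le |g|_\infty^2|f|_{F_s}^2$. Substituting $|f|_\infty \le C_s|f|_{F_s}$ and $|g|_\infty \le C_s|g|_{F_s}$ yields $|fg|_{F_s} \le 2C_s|f|_{F_s}|g|_{F_s}$.

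The computation is otherwise routine; the only point requiring care is the constant. Applying the triangle inequality blockwise to the three separate pieces (the pure $L^2$ block and the two difference blocks) would produce $\sqrt5\,C_s$ instead of $2C_s$. I expect the main obstacle to be exactly this bookkeeping: one must divide the $L^2$ term into two equal halves and attach one half to each difference family, so that each of the two reassembled vectors $U$ and $V$ recovers a full $F_s$ norm of $g$ (respectively $f$), which is what collapses the estimate to the clean factor $2$.
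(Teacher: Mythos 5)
Your proof is correct and takes essentially the same route as the paper: both rest on the discrete Leibniz identity $\Delta_n(fg)=f(\cdot+\delta_n)\,\Delta_n g+g\,\Delta_n f$ combined with the sup-norm control $|h|_\infty\le C_s|h|_{H_s}\le C_s|h|_{F_s}$ for $s>1$. The paper extracts the constant via the pointwise bound $|\Delta(fg)|^2\le 2\left(|g|_\infty^2|\Delta f|^2+|f|_\infty^2|\Delta g|^2\right)$ and arrives at the same $|fg|_{F_s}^2\le 4C_s^2|f|_{F_s}^2|g|_{F_s}^2$; your triangle inequality in the auxiliary space $L^2(\nu)$, with the $L^2$ block split into halves, is just a tidier bookkeeping of the identical estimate.
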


\begin{proof}
First we bound the $L^2$ norm
\begin{eqnarray*}
 |fg|_{L^2}^2 \leq |f|_{\infty}^2|g|_{L^2}^2\,,\\
 |fg|_{L^2}^2 \leq C_s^2 |f|_{F_s}^2 |g|_{L^2}^2\,.
\end{eqnarray*}
Now with the inequality $$|\Delta (fg)|^2 \leq 2(|g|_{\infty}^2 |\Delta f|^2 + |f|_{\infty}^2 |\Delta g|^2)\,,$$  
we obtain the result $ |fg|_{F_s}^2 \leq 4C_s^2 |f|_{F_s}^2 |g|_{F_s}^2$.
\end{proof}

The first natural generalization in two dimensions of $H_s$ is the tensor product $H_s \otimes H_{s'}$. We could do the same for $F_s$ but we prefer a slightly different definition of a space $F_{s,s'}$ (Although it turns to define exactly the tensor product $F_s \otimes F_{s'}$). We will take advantage of this definition to extend the composition property.

\begin{defi} \label{iter_F_s}
Let $(s,s')$ be two positive real numbers, the space $F_{s,s'} \subset L^2(T_2)$ is defined by 
$$ F_{s,s'} = F_s(S_1,F_{s'}),$$
in the following sense with $I_{n,k} = [\frac{k}{2^{n}},\frac{k}{2^{n}}+\frac{1}{2^{n+1}}]$,
$$|f|_{F_{s,s'}}^2 = \int_{S_1} |f|_{F_{s'}}^2 \,dx \, + \, \sum_{n,k} 2^{ns-1} \int_{I_{n,k}} |f(x+ \frac{1}{2^{n+1}} )(.) - f(x)(.)|_{F_{s'}}^2 dx < \infty\,.$$
\end{defi}

\begin{rem} \label{norme_tens}
This definition can be rewritten in a more explicit form,
\begin{eqnarray} \label{norme}
F_{s,s'}(T) &=& \{ f \in L^2(T) | \, |f|_{F_{s,s'}}^2 = \int_{S_1} |f(x,.)|_{F_{s'}}^2 \,dx \, + \int_{S_1} |f(.,y)|_{F_{s}}^2 \,dy \, + \nonumber \\ 
&&\sum_{n,k,n',k'} 2^{ns+n's'-2} \int_{I_{n,k}} \int_{I_{n',k'}} |\Delta_{2,n'}(\Delta_{1,n} f)|^2 dx\,dy \, < \infty      \}\,, 
\end{eqnarray}
with $\Delta_{1,n} f = f(x+ \frac{1}{2^{n+1}},y ) - f(x,y)$ and $\Delta_{2,n} f = f(x,y+ \frac{1}{2^{n+1}}) - f(x,y)$.
\end{rem}

\vspace{0.3cm}
\noindent
In what follows, we will denote $c_{(n,k),(n',k')}^2 := \int_{I_{n,k}} \int_{I_{n',k'}} |\Delta_{2,n'}(\Delta_{1,n} f)|^2 dx\,dy$.

\vspace{0.3cm}
\noindent
As in the one-dimensional case, the injection $F_{s,s'} \hookrightarrow H_{s,s'}$ would be again a straightforward application of Cauchy-Schwarz inequality. However we provide a different proof using the following proposition:

\begin{prop} \label{tens=comp}
As defined in \ref{iter_F_s}, we have
$$F_{s,s'} = F_s \otimes F_{s'} \, ,$$
and as a consequence,
$F_{s,s'} \hookrightarrow H_{s,s'}$.
\end{prop}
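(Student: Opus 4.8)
The plan is to establish the isometric identity $F_{s,s'}=F_s\otimes F_{s'}$ first, and then read off the embedding $F_{s,s'}\hookrightarrow H_{s,s'}$ by tensoring the one–dimensional embedding $F_s\hookrightarrow H_s$ proved above (which satisfies $|f|_{H_s}\le|f|_{F_s}$ for $s>1$). The most efficient route to the tensor identity is to expand $f\in F_{s,s'}$ along a fixed basis of $F_{s'}$: choosing an orthonormal basis $(v_j)_j$ of $F_{s'}$ and setting $f_j(x)=\langle f(x,\cdot),v_j\rangle_{F_{s'}}$, I would show that $f\mapsto(f_j)_j$ is an isometry of $F_{s,s'}$ onto $\ell^2(\N;F_s)$, and then identify $\ell^2(\N;F_s)\simeq F_s\otimes\ell^2(\N)\simeq F_s\otimes F_{s'}$ canonically, with the elementary tensor $u\otimes v$ corresponding to the function $(x,y)\mapsto u(x)v(y)$.

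\textbf{Key steps.} From definition \ref{iter_F_s} the finiteness of $\int_{S_1}|f(x,\cdot)|_{F_{s'}}^2\,dx$ forces $f(x,\cdot)\in F_{s'}$ for a.e.\ $x$, so each $f_j$ and each difference $\Delta_n f_j(x)=\langle\Delta_{1,n}f(x,\cdot),v_j\rangle_{F_{s'}}$ is well defined a.e. Cauchy--Schwarz in $F_{s'}$ gives $|f_j(x)|^2\le|f(x,\cdot)|_{F_{s'}}^2$ and $|\Delta_n f_j(x)|^2\le|\Delta_{1,n}f(x,\cdot)|_{F_{s'}}^2$, whence $f_j\in F_s$ with $|f_j|_{F_s}^2\le|f|_{F_{s,s'}}^2$. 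For a.e.\ fixed $x$, Parseval's identity in $F_{s'}$ yields $|f(x,\cdot)|_{F_{s'}}^2=\sum_j|f_j(x)|^2$ and $|\Delta_{1,n}f(x,\cdot)|_{F_{s'}}^2=\sum_j|\Delta_n f_j(x)|^2$. Integrating in $x$, summing the weights $2^{ns-1}$ over $(n,k)$, and interchanging with $\sum_j$ by Tonelli (all terms nonnegative) gives exactly
\[
|f|_{F_{s,s'}}^2=\sum_j\Big(\int_{S_1}|f_j|^2\,dx+\sum_{n,k}2^{ns-1}\int_{I_{n,k}}|\Delta_n f_j|^2\,dx\Big)=\sum_j|f_j|_{F_s}^2,
\]
the desired isometry. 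Surjectivity follows because for any $(g_j)_j\in\ell^2(\N;F_s)$ the partial sums $\sum_{j\le N}g_j\otimes v_j$ are Cauchy in $F_{s,s'}$ by the same identity applied to their differences, and converge to an element with components $g_j$. The compatibility with the tensor inner product is the routine algebraic fact that, evaluating the polarized form of definition \ref{iter_F_s} on elementary tensors $u\otimes v$ and $u'\otimes v'$, each of its four contributions factors into an $x$–integral times a $y$–integral; collecting them reproduces $\langle u,u'\rangle_{F_s}\langle v,v'\rangle_{F_{s'}}$, so in particular $|u\otimes v|_{F_{s,s'}}=|u|_{F_s}\,|v|_{F_{s'}}$.

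\textbf{Consequence.} Writing $H_{s,s'}=H_s\otimes H_{s'}$, let $\iota_s:F_s\hookrightarrow H_s$ and $\iota_{s'}:F_{s'}\hookrightarrow H_{s'}$ be the injections, which for $s,s'>1$ are contractions by the inclusion $F_s\subset H_s$ established earlier. The Hilbert tensor product of two injective contractions is again an injective contraction ($\|\iota_s\otimes\iota_{s'}\|=\|\iota_s\|\,\|\iota_{s'}\|\le1$, and $\iota_s\otimes\iota_{s'}=(\iota_s\otimes I)(I\otimes\iota_{s'})$ is a composition of injectives). Transporting this through the identity $F_{s,s'}=F_s\otimes F_{s'}$ yields the continuous injection $F_{s,s'}\hookrightarrow H_{s,s'}$ with $|f|_{H_{s,s'}}\le|f|_{F_{s,s'}}$.

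\textbf{Main obstacle.} The orthonormality/factorization algebra and the embedding step are routine; the delicate points are the measurability and completeness underlying the component map. I would need to confirm that $x\mapsto f(x,\cdot)$ is a strongly measurable $F_{s'}$–valued map (so that $f_j$ and $\Delta_n f_j$ are measurable via pairing with the fixed vector $v_j$, by Pettis measurability), that the pointwise Parseval identities hold for a.e.\ $x$, and that the Tonelli interchange of $\sum_j$ with the integral and the weighted sum over $(n,k)$ is legitimate — these are exactly what turn the formal expansion $f=\sum_j f_j\otimes v_j$ into a genuine isometric isomorphism rather than a mere orthonormal family.
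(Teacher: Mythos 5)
Your proposal is correct, but it takes a genuinely different route from the paper. The paper's proof is a two-line argument: it asserts that the family $(\phi_{n,k}\otimes\psi_{n',k'})$ is simultaneously an orthonormal Hilbert basis of $F_s\otimes F_{s'}$ "by the definition" and of $F_{s,s'}$ "from the remark," and then checks the embedding inequality $|u\otimes v|_{H_{s,s'}}=|u|_{H_s}|v|_{H_{s'}}\leq |u|_{F_s}|v|_{F_{s'}}$ on elementary tensors only. You instead expand $f\in F_{s,s'}$ along an orthonormal basis $(v_j)_j$ of the \emph{second factor}, prove via pointwise Parseval and Tonelli that $f\mapsto(f_j)_j$ is an isometry onto $\ell^2(\N;F_s)\simeq F_s\otimes F_{s'}$, and deduce the embedding from the operator-theoretic fact that the tensor product of two injective contractions is an injective contraction. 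Your version is more careful on exactly the points where the paper is thin: the Haar-type functions are orthonormal in $L^2$ but not obviously in $F_{s'}$ (whose inner product carries the weighted difference terms), so the paper's "common orthonormal basis" claim is not self-evident, whereas your isometry computation
\[
|f|_{F_{s,s'}}^2=\sum_j|f_j|_{F_s}^2
\]
needs no such claim; and the paper's verification of the embedding on elementary tensors alone implicitly relies on the same boundedness of $\iota_s\otimes\iota_{s'}$ that you state explicitly. The price you pay is the measurability/Pettis bookkeeping you flag at the end, which is routine here since both spaces sit inside $L^2$. In short, the paper's argument is shorter but leans on an unproved orthonormality assertion; yours is longer but closes that gap and yields the same isometric identification and the contractive embedding $|f|_{H_{s,s'}}\leq|f|_{F_{s,s'}}$.
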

\noindent
Remark that the identification is here allowed since each of the two spaces are $L^2$ subspaces.
\begin{proof}
Let $(n,k),(n',k')$ be two couples of integers, then $(\phi_{n,k} \otimes \psi_{n',k'})_{(n,k),(n',k')}$ is an orthonormal Hilbert basis in $F_s \otimes F_{s'}$ by the definition. From the remark \ref{norme_tens}, this is also true in $F_{s,s'}$. 
\\
If $u \otimes v \in F_s \otimes F_{s'}$, then $$|u \otimes v|_{H_{s,s'}} = |u|_{H_s} |v|_{H_{s'}} \leq |u|_{F_s} |v|_{F_{s'}} \, ,$$
and the second assertion is verified.
\end{proof}

\noindent
Now we prove that if $f$ is sufficiently smooth then $f$ belongs to $F_{s,s'}$. It requires to control one more derivative than in the one-dimensional case. 

\begin{prop} \label{bigenough}
Let $f$ be a $C^1$ function such that $y \rightarrow \partial_{1}f(x,y)$ is Lipschitz uniformly in the first variable $x$, then $f \in F_{s,s'}$ for $s<2$ and $s'<2$.
\end{prop}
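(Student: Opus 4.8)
The plan is to use the explicit form of the norm furnished by Remark \ref{norme_tens}, which decomposes $|f|_{F_{s,s'}}^2$ into two ``diagonal'' contributions $\int_{S_1} |f(x,\cdot)|_{F_{s'}}^2 \, dx$ and $\int_{S_1} |f(\cdot,y)|_{F_{s}}^2 \, dy$, together with one ``mixed'' contribution $\sum_{n,k,n',k'} 2^{ns+n's'-2}\, c_{(n,k),(n',k')}^2$. I would prove that each of these three quantities is finite.

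For the two diagonal terms I would invoke the one-dimensional result already established. Since $f$ is $C^1$ on the compact torus, both partial derivatives are bounded; hence for each fixed $x$ the slice $y \mapsto f(x,y)$ is Lipschitz with a constant bounded uniformly in $x$. The computation in the proof of Proposition \ref{contains_lip} then gives a bound on $|f(x,\cdot)|_{F_{s'}}^2$ that is uniform in $x$ as soon as $s'<2$, and integrating over $x \in S_1$ (a set of finite measure) shows the first diagonal term is finite. The second diagonal term is handled symmetrically, using boundedness of $\partial_1 f$ and $s<2$.

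The heart of the argument, and the step where the extra hypothesis enters, is the mixed term. Here the key estimate I would establish is a bound on the mixed second difference $\Delta_{2,n'}(\Delta_{1,n} f)$ of the product form $h_n h_{n'}$, with $h_n = 2^{-(n+1)}$ and $h_{n'} = 2^{-(n'+1)}$. I would obtain this by writing, via the fundamental theorem of calculus, $\Delta_{1,n} f(x,y) = \int_0^{h_n} \partial_1 f(x+t,y)\, dt$, so that $\Delta_{2,n'}(\Delta_{1,n} f)(x,y) = \int_0^{h_n}[\partial_1 f(x+t,y+h_{n'}) - \partial_1 f(x+t,y)]\, dt$; the hypothesis that $y \mapsto \partial_1 f(x,y)$ is Lipschitz uniformly in $x$ bounds the integrand by $L\,h_{n'}$ and yields $|\Delta_{2,n'}(\Delta_{1,n} f)| \leq L\, h_n h_{n'}$. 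This is exactly the ``one more derivative'' flagged before the statement: with only separate Lipschitz control in each variable one would get a single power of the step size, which is too weak for the summation to converge.

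Finally I would plug this estimate into $c_{(n,k),(n',k')}^2 \leq L^2 h_n^2 h_{n'}^2 |I_{n,k}||I_{n',k'}|$, sum over the $2^n$ indices $k$ and the $2^{n'}$ indices $k'$, and collect powers of $2$ to reduce the mixed term to $L^2 2^{-8}\sum_{n} 2^{n(s-2)} \sum_{n'} 2^{n'(s'-2)}$. Both geometric series converge exactly under the hypotheses $s<2$ and $s'<2$, giving $f \in F_{s,s'}$. The main obstacle is really just identifying and proving the product-form bound on the mixed difference; once that is in hand the summation is routine and mirrors the one-dimensional case.
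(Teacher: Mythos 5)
Your proposal is correct and follows essentially the same route as the paper's proof: the key step in both is the product-form bound $|\Delta_{2,n'}(\Delta_{1,n}f)| \leq L\, 2^{-(n+1)}2^{-(n'+1)}$ obtained by writing $\Delta_{1,n}f$ as an integral of $\partial_1 f$ and applying the uniform Lipschitz hypothesis in the second variable, after which the summation over $(n,k,n',k')$ reduces to the same pair of geometric series converging for $s<2$, $s'<2$. Your handling of the two diagonal terms via Proposition \ref{contains_lip} is also what the paper does, with your version merely spelling out more explicitly why the slice-wise Lipschitz constants are uniform.
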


\begin{proof}
We will denote by $\text{Lip}_{1,2}$ the \lip constant in the second variable of $\partial_1 f$.
We need to estimate the integrals detailed in \eqref{norme}, we will denote by $c_{(n,k),(n',k')}$ each integral. Using the \lip property, we have
\begin{equation*} 
|\Delta_{2,n'}(\Delta_{1,n} f)| \leq  \int_x^{x+\frac{1}{2^{n+1}}} |\partial_{1}f(u,y+\frac{1}{2^{n'+1}}) - \partial_{1}f(u,y)| \, du 
\leq \text{Lip}_{1,2} 2^{-(n'+n+2)} \, .
\end{equation*}
Then, we have
\begin{equation*}
c_{(n,k),(n',k')} = \int_{I_{n,k}} \int_{I_{n',k'}} |\Delta_{2,n'}(\Delta_{1,n} f)|^2 dx\,dy \leq \text{Lip}_{1,2}^2 \, 2^{-3(n'+n+2)}.
\end{equation*}
\noindent
Summing up on $(k,k')$, we get that 
\begin{equation} \label{convergence}
\begin{split}
\sum_{(n,k),(n',k')} 2^{ns+n's'-2} c^2_{(n,k),(n',k')} & \leq \sum_{(n,k),(n',k')} \text{Lip}^2_{1,2} \,  2^{n(s-3) + n'(s'-3) -8}, \nonumber \\
 & \leq \sum_{n,n'} \text{Lip}^2_{1,2} \, 2^{n(s-2) + n'(s'-2) -8}. 
\end{split}
\end{equation}

\noindent
In the equation \eqref{convergence}, the right-hand side converges if $s<2$ and $s'<2$. To conclude the proof, the first two terms in the $F_{s,s'}$ norm according to the formula \eqref{norme} are well defined thanks to the proposition \ref{contains_lip} which requires $s<2$ and $s'<2$.
\end{proof}

\begin{rem}
Again, any function for which there exists a finite dyadic partition of $T$ such that the restriction of $f$ on each domain of the partition satisfies the condition in the proposition \ref{bigenough} belongs to $F_{s,s'}$. 
\end{rem}

\vspace{0.2cm}
\noindent
Next, we will state the relevant inequalities to prove the needed properties. At this point, it is worthwhile to generalize a little the spaces we introduced in order to extend our results to any dimension. Observe that $F_s$ is a Hilbert algebra of functions. A way to generalize our result is to study
$F_s(S_1,F)$ where $F$ is a Hilbert space of functions which is also an algebra. Although we could be more general, we will assume further that $F \subset L^2(T_n)$ where $T_n$ is the $n$-dimensional torus: $T_n \doteq \ms{R}^n / \ms{Z}^n$. As in the previous definition,

\begin{defi}
 Let $s>0$ be a positive real number, the space $F_{s}(F)$ is by 
$$ F_{s}(F) = F_s(S_1,F) \, ,$$
in the following sense: 
\\
with $I_{n,k} = [\frac{k}{2^{n}},\frac{k}{2^{n}}+\frac{1}{2^{n+1}}]$, a function $f \in L^2(T_{n+1})$ belongs to $F_s(F)$ if 
$$|f|_{F_{s}(F)}^2 := \int_{S_1} |f(x)|_{F}^2 \,dx \, + \, \sum_{n,k} 2^{ns-1} \int_{I_{n,k}} |f(x+ \frac{1}{2^{n+1}} ) - f(x)|_{F}^2 dx < \infty\,.$$
We have
\begin{equation} \label{egalite_tens_comp}
F_{s}(F) = F_s \otimes F \,.
\end{equation}
\end{defi}
\noindent
The last equation \eqref{egalite_tens_comp} uses the same argument as in proposition \ref{tens=comp}.

\begin{prop} \label{normes}
Let us assume that $F$ is a RKHS on a space $X$ and that there exists a constant $C_F$ such that $\sup_{x \in X} |f(x)| \leq C_F|f|_{F}$.
If $s>1$ then the following inequalities hold for $f \in F_{s}(F) = F_s \otimes F$,
\begin{align*}
& \sup_{s \in S_1} |f(s)|_F \leq C_s |f|_{F_s(F)} \, \\
& \sup_{s \in S_1} |\Delta_1 f|_{F} \leq C_{s} |\Delta_1 f|_{F_s(F)} \, \\
& \sup_{(s,x) \in S_1 \times X}|f(s,x)|_{\infty} \leq C_s \, C_{F} |f|_{F_{s}(F)} \,, 
\end{align*}
where $\Delta_1$ is a difference operator defined for $\delta>0$ as $(\Delta_1 g )(x) \doteq g(x+\delta) - g(x)$.
Moreover proposition \ref{continuity} is also verified.
\end{prop}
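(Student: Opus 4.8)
The plan is to observe that every scalar estimate already established for $F_s$ --- the inclusion $F_s\subset H_s$ with $|f|_{H_s}\leq|f|_{F_s}$, Remark \ref{domin}, and Proposition \ref{continuity} --- uses only the Cauchy--Schwarz inequality together with the geometry of the Haar system in the $S_1$ variable (namely $|\psi_{n,k}|_\infty=2^{n/2}$ and the fact that at each scale $n$ a given point lies in the support of exactly one $\psi_{n,k}$). None of these ingredients refers to the scalar nature of the values of $f$, so each argument transfers verbatim once the absolute value is replaced by the Hilbert norm $|\cdot|_F$ and ordinary integrals by Bochner integrals. I would therefore set up the $F$-valued Haar analysis once and then read off all four assertions. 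Throughout I write $t$ for the point of $S_1$ (the variable called $s$ in the $\sup_{s\in S_1}$ of the statement) to avoid clashing with the smoothness index $s$.

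Concretely, for $f\in F_s(F)\subset L^2(S_1,F)$ I would define the $F$-valued Haar coefficients $f_{n,k}:=\int_{S_1} f(x)\,\psi_{n,k}(x)\,dx\in F$. The vector-valued analogue of $F_s\subset H_s$ is obtained exactly as in the scalar proof: writing $f_{n,k}=\pm 2^{n/2}\int_{I_{n,k}}\bigl(f(x+\tfrac1{2^{n+1}})-f(x)\bigr)\,dx$ and applying Cauchy--Schwarz for the $F$-valued integral over $I_{n,k}$ (of length $2^{-(n+1)}$) gives
\[
|f_{n,k}|_F^2\leq \tfrac12\int_{I_{n,k}}\bigl|f(x+\tfrac1{2^{n+1}})-f(x)\bigr|_F^2\,dx,
\]
so that, summing against $2^{ns}$ and absorbing the $n=-1$ (mean) term into the $L^2$ part of the norm, one gets $\sum_{n,k}2^{ns}|f_{n,k}|_F^2\leq |f|_{F_s(F)}^2$. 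This is the only genuine estimate needed. For the first inequality I then expand $f$ in the Haar basis of the $S_1$ variable: at a non-dyadic $t$, $f(t)=\sum_{n}f_{n,k(t,n)}\,\psi_{n,k(t,n)}(t)$ in $F$, where $k(t,n)$ is the unique index with $t$ in the support of $\psi_{n,k}$. Taking $F$-norms, using $|\psi_{n,k}(t)|=2^{n/2}$ and Cauchy--Schwarz in $n$,
\[
|f(t)|_F\leq \sum_{n}2^{n/2}|f_{n,k(t,n)}|_F\leq \Bigl(\sum_{n}2^{-n(s-1)}\Bigr)^{1/2}\Bigl(\sum_{n}2^{ns}|f_{n,k(t,n)}|_F^2\Bigr)^{1/2}\leq C_s\,|f|_{F_s(F)},
\]
since $\sum_{n\geq 0}2^{-n(s-1)}=C_s^2$ for $s>1$. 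The same computation applied to $f(x)-f(y)$, keeping only the scales $l\geq n$ that are not constant on the relevant dyadic ball, reproduces the $F$-valued version of Proposition \ref{continuity}; in particular it furnishes the continuous-in-$t$ representative valued in $F$, so that the pointwise values $f(t)\in F$ are unambiguously defined.

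The remaining two inequalities are immediate corollaries. The second is the first inequality applied to $g=\Delta_1 f$, which again lies in $L^2(S_1,F)$, giving $\sup_{t}|\Delta_1 f|_F\leq C_s|\Delta_1 f|_{F_s(F)}$. The third combines the first inequality with the hypothesis that $F$ is a RKHS on $X$: for each fixed $t$ one has $\sup_{x\in X}|f(t)(x)|\leq C_F|f(t)|_F$, whence $\sup_{(t,x)}|f(t,x)|\leq C_F\sup_{t}|f(t)|_F\leq C_s C_F|f|_{F_s(F)}$.

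The only real obstacle is bookkeeping in the vector-valued setting: one must check that the Bochner integral defining $f_{n,k}$ converges in $F$ and that the Haar series for $f(t)$ converges there to the genuine pointwise value rather than to a mere almost-everywhere object. Both follow from the absolute convergence produced by the first displayed estimate above, so no analytic difficulty beyond the scalar case arises --- the argument is essentially a transcription of Remark \ref{domin} and Proposition \ref{continuity} with $|\cdot|$ replaced by $|\cdot|_F$.
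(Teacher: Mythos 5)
Your argument is correct, but it takes a genuinely different route from the paper's. The paper exploits the identification $F_s(F)=F_s\otimes F$ (equation \eqref{egalite_tens_comp}, via Proposition \ref{tens=comp}): it writes $u=\sum_m E_m\otimes f_m$ with $(f_m)$ a Hilbert basis of $F$ and $E_m\in F_s$, invokes the already-established scalar fact that $F_s$ is an RKHS on $S_1$ with evaluation norm $C_z\leq C_s$ (Remark \ref{domin}), checks that $\bigl|\sum_{m=p}^q E_m(z)f_m\bigr|_F^2=\sum_{m=p}^q E_m(z)^2\leq C_z^2\sum_{m=p}^q|E_m|_{F_s}^2$ so that the evaluation $\delta_z$ extends continuously to all of $F_s\otimes F$, and reads off $|u(z)|_F\leq C_s|u|_{F_s\otimes F}$; the second and third inequalities then follow exactly as in your last paragraph. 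You instead bypass the tensor decomposition and redo the Haar analysis with Bochner integrals, establishing the $F$-valued analogues of the inclusion $F_s\subset H_s$, of Remark \ref{domin}, and of Proposition \ref{continuity} directly from the explicit $F_s(F)$ norm. Both proofs rest on the same two Cauchy--Schwarz estimates. What your route buys: it needs neither \eqref{egalite_tens_comp} nor the abstract tensor machinery, and it delivers the final clause of the statement (the vector-valued Proposition \ref{continuity}) explicitly, which the paper's proof leaves implicit. What the paper's route buys: brevity, and it sidesteps the representative issue you flag at the end, since $\delta_z$ is defined by continuous extension from finite tensors rather than by summing a Haar series at a point; in your version the identification of the Haar-series limit with $f(t)$ does require the one extra observation that the dyadic averages $S_Nf$ converge to $f$ almost everywhere in the Bochner sense (vector-valued martingale convergence or Lebesgue differentiation), which is true but is the one point where your write-up is slightly elliptical --- exactly as the paper itself is in the scalar case.
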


\begin{proof}
If $u \in F_s \otimes F$, then $u = \sum_{n,m \in \ms{N}^2} \alpha_{n,m} e_n \otimes f_n$, with $(e_n)_{n \in \ms{N}}$ and $(f_n)_{n \in \ms{N}}$ Hilbert basis respectively for $F_s$ and $F$. By definition, $\sum_{n,m \in \ms{N}^2} \alpha_{n,m}^2 < \infty$. Hence, we have,
$$ u= \sum_{m \in \ms{N}} (\sum_{n \in \ms{N}} \alpha_{n,m} e_n) \otimes f_m  = \sum_{m \in \ms{N}} E_m \otimes f_m \,.$$
Then, we can apply the evaluation at point $z \in S_1$ since $F_s$ is also a RKHS. We denote by $C_z$ the norm of the evaluation at point $z \in S_1$:
the sequence $\sum_{m =0}^N E_m(s) f_m$ is a Cauchy sequence in $F$ since 
$$|\sum_{n=p}^qE_n(z)f_n|_F^2=\sum_{n=p}^q E_n(z)^2\stackrel{\text{rkhs prop.}}{\leq} C_{z}^2\sum_{n=p}^q |E_n|_{F_s}^2\, .$$
As a consequence, the evaluation at point $z \in S_1$ is well defined and it makes the space $F_s \otimes F$ a RKHS on $S_1$ with values in $F$.
Furthermore, as $C_{z} \leq C_s$, we have:
$$ \sup_{z \in S_1} |f(z)|_H \leq C_s |f|_{F_s(F)} \, .$$
The second inequality is the application of the first one to $\Delta_1f$ and the last one just uses the assumption on the RKHS $F$.
\end{proof}

\noindent
Now, we can easily generalize the work done in one dimension. 

\begin{prop} \label{composition2D}
Let $F \subset L^2(T_n,\ms{R}^k)$ for $k \geq 1$ be a RKHS algebra with a continuous injection in $L^{\infty}(T_n,\ms{R}^k)$. Assume that the left composition with an element $H \in C^l(\ms{R}^k,\ms{R}^k)$
$$ F \ni g\mapsto H \circ g \in F$$ is \lip on every ball $B(0,r)$ of constant $C_H(r)$. 
Then, 
\begin{itemize}
\item if $G \in C^{l+1}(\ms{R}^k,\ms{R}^k)$, the composition
$$ F_s(F) \ni f \mapsto G \circ f \in F_s(F)\,, $$
is \lip on every ball,
\item with the additional assumption that the left composition with $G'$ and $G''$ in $F$ are locally \lip such that there exists a polynomial real function $P$ verifying $\max(C_{G'}(r),C_{G}(r)) \leq P(r)$ for $r>0$, then there exists a constant depending on $F$ and $s$, $a \in \ms{R}^+$ such that the \lip constant $C_{G,F_s(F)}$ for the left composition with $G$ on $F_s(F)$
$$C_{G,F_s(F)}(r) \leq  arP(r)\,.$$ 
\end{itemize}
\end{prop}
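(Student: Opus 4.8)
The plan is to lift the proof of Proposition \ref{loclip} from the scalar base to the Hilbert algebra $F$, reading $F_s(F)=F_s\otimes F$ through its defining norm and replacing absolute values by $F$-norms and pointwise products by the algebra product in $F$. Fix $f_1,f_2$ in the ball $B(0,r)\subset F_s(F)$; writing $\delta=\tfrac1{2^{n+1}}$ and $\Delta f(x)=f(x+\delta)-f(x)\in F$, the norm splits into the $L^2(S_1;F)$ piece $\int_{S_1}|G\circ f_1(x)-G\circ f_2(x)|_F^2\,dx$ and the weighted difference piece $\sum_{n,k}2^{ns-1}\int_{I_{n,k}}|\Delta(G\circ f_1)(x)-\Delta(G\circ f_2)(x)|_F^2\,dx$. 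The first piece is immediate: for fixed $x$ one has $f_i(x)\in F$, and since $G\in C^{l+1}\subset C^l$ its left composition on $F$ is Lipschitz, giving $|G\circ f_1(x)-G\circ f_2(x)|_F\le C_G(C_s r)\,|f_1(x)-f_2(x)|_F$, where Proposition \ref{normes} (this is where $s>1$ enters) bounds $\sup_x|f_i(x)|_F\le C_s r$. Integrating over $S_1$ controls this piece by $C_G(C_sr)^2|f_1-f_2|_{F_s(F)}^2$.

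The difference piece is the main work, and the key object is the $F$-valued difference quotient. For each $x$ I would set
\[ Q_i(x)=\int_0^1 G'\circ\bigl(f_i(x)+t\,\Delta f_i(x)\bigr)\,dt, \]
a Bochner integral whose integrand lies in $F$ because $f_i(x)+t\,\Delta f_i(x)\in F$ and left composition with $G'\in C^l$ maps $F$ into $F$ Lipschitz-continuously. By the fundamental theorem of calculus applied pointwise on $T_n$ one gets $Q_i(x)\cdot\Delta f_i(x)=\Delta(G\circ f_i)(x)$ as elements of $F$; this is the $F$-valued analogue of \eqref{division} and sidesteps any genuine division. Then, exactly as in \eqref{decomposition_loclip}, I telescope
\[ \Delta(G\circ f_1)-\Delta(G\circ f_2)=\bigl(Q_1-Q_2\bigr)\,\Delta f_1+Q_2\,\bigl(\Delta f_1-\Delta f_2\bigr), \]
both products understood in the algebra $F$.

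The two resulting terms are then estimated using continuity of the multiplication in $F$ (constant $c_F$), the sup-over-$S_1$ bounds of Proposition \ref{normes}, and the Lipschitz-composition constant $C_{G'}$. For the second term I bound $|Q_2(x)|_F\le\int_0^1|G'\circ(\cdots)|_F\,dt\le B(r)$ by the $G'$-composition estimate, so $|Q_2(x)(\Delta f_1-\Delta f_2)(x)|_F\le c_FB(r)\,|\Delta(f_1-f_2)(x)|_F$; summing against $2^{ns-1}$ and recognizing $\sum_{n,k}2^{ns-1}\int_{I_{n,k}}|\Delta(f_1-f_2)|_F^2\le|f_1-f_2|_{F_s(F)}^2$ controls it by $(c_FB(r))^2|f_1-f_2|_{F_s(F)}^2$. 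For the first term, $Q_1(x)-Q_2(x)=\int_0^1[G'\circ(f_1(x)+t\Delta f_1(x))-G'\circ(f_2(x)+t\Delta f_2(x))]\,dt$, whose arguments differ by $(1-t)(f_1(x)-f_2(x))+t(f_1(x+\delta)-f_2(x+\delta))$, so the $G'$-composition Lipschitz bound and Proposition \ref{normes} give $|Q_1(x)-Q_2(x)|_F\le C_{G'}(\rho)\,C_s\,|f_1-f_2|_{F_s(F)}$ uniformly in $x$ (with $\rho=O(r)$). Multiplying by $\Delta f_1$ in $F$ and summing, using $\sum_{n,k}2^{ns-1}\int_{I_{n,k}}|\Delta f_1|_F^2\le|f_1|_{F_s(F)}^2\le r^2$, yields a bound $(c_FC_{G'}(\rho)C_s)^2|f_1-f_2|_{F_s(F)}^2\,r^2$. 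Collecting the three contributions proves local Lipschitz continuity, which is the first assertion.

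For the explicit rate I track which constants carry a factor $r$: the first term contributes $C_{G'}(\rho)\,r$ and $B(r)\sim C_{G'}(\rho)\,\rho$, both $O(rP(r))$ under $\rho=O(r)$ and the polynomial control $\max(C_{G'}(r),C_G(r))\le P(r)$, while the $L^2(S_1;F)$ piece contributes only $C_G(C_sr)=O(P(r))$; hence $C_{G,F_s(F)}(r)\le a\,r\,P(r)$ for a constant $a$ depending only on $F$ and $s$. I expect the main obstacle to be the rigorous treatment of the $F$-valued difference quotient $Q_i(x)$ — showing it is a genuine element of $F$ and that $Q_i\,\Delta f_i=\Delta(G\circ f_i)$ in $F$ — since this is exactly where the hypothesis that composition with $G'\in C^l$ is Lipschitz on $F$ must replace the naive pointwise division of the scalar argument; the remainder is the bookkeeping of Proposition \ref{loclip} carried through with $|\cdot|$ read as $|\cdot|_F$ and products read in $F$. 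This is precisely the inductive step that, specialized to $F=F_{s'}$, recovers Proposition \ref{loclip2} and, by iteration, gives composition stability in arbitrary ambient dimension.
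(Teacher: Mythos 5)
Your proposal is correct and follows essentially the same route as the paper's proof: the same splitting of the $F_s(F)$ norm into the $L^2(S_1;F)$ piece and the weighted difference piece, the same $F$-valued difference-quotient identity $\Delta(G\circ f)=\bigl(\int_0^1 G'(f+t\Delta f)\,dt\bigr)\Delta f$ (the paper's formula \eqref{division2}), the same telescoping into two products estimated via the algebra structure of $F$ and the sup bounds of Proposition \ref{normes} (yours swaps the roles of $f_1$ and $f_2$ in the telescoping, which is immaterial), and the same bookkeeping of constants to obtain the $arP(r)$ bound. No substantive difference.
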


\begin{proof}
We first need to prove that if $f \in F_s(F)$ then $G \circ f \in F_s(F)$. 
With the proposition \ref{normes}, we have that 
\begin{equation} \label{localineq}
\sup_{x \in S_1} |f(x)|_H \leq C_s |f|_{F_s(F)}.
\end{equation}
 Then, we obtain for the first term in the norm,
$$ \int_{S_1} |G \circ f - G(0)|_{F}^2 d\mu \leq C_s^2 C_G^2(C_s |f|_{F_s(F)}) |f|^2_{F_s(F)}\,. $$
For the term involving the difference, we need to introduce again the formula,
\begin{equation} \label{division2}
\Delta_x (G \circ f) = \, (\int_0^1 G'(t \Delta_x f + f)(\Delta_x f) dt) \, , 
\end{equation}
which is now allowed since $G'$ is $C^{l}$. The formula \eqref{division2} uses the fact that $F \subset L^2(T_n,\ms{R}^k)$ to give a sense to the composition.
As $F$ is an algebra, we have $G'(t \Delta_x f + f)\Delta_x f \in F$. 
Obviously we have also $ t \Delta_x f + f \in B_F(0,3r_0)$ for $|t| \leq 1$ and $r_0 = C_s|f|_{F_s(F)}$. With the inequality \eqref{localineq}, we get
\begin{equation} \label{localineq2}
|G'(t \Delta_x f + f)\Delta_x f|_{F} \leq 3 M r_0 |\Delta_x f|_{F} C_{G'}(3r_0)\,,
\end{equation}
with $M$ the constant associated with the continuity of the multiplication in $F$:
$$ \forall (f,g) \in F^2 \, , \, |fg|_F \leq M|f|_F|g|_F \, .$$
Remark that $G'(t \Delta_x f + f)$ can be seen as a matrix valued function. We use the matrix norm implied by the Euclidean norm on $\ms{R}^k$.
The inequality \eqref{localineq2} directly proves that $G \circ f \in F_s(F)$ with in addition:
\begin{equation*}
|G \circ f - G(0)|_{F_s(F)}^2 \leq [\max( 3Mr_0C_{G'}(3r_0) , C_sC_G(r_0) )]^2 \, |f|^2_{F_s(F)}\,.
\end{equation*}

\vspace{0.3cm}
\noindent
We now prove the \lip property:
\\
let $f_1$ and $f_2$ be two elements in $F_{s}(F)^2$ with $\max(|f_1|_{F_{s}(F)},|f_2|_{F_{s}(F)}) \leq r_1$. With the \lip property of the composition on $F$ we have with $r_2 = C_s r_1$,
\begin{equation} \label{localfirstlip}
\int_{S_1} |G \circ f_1(x) - G \circ f_2(x)|_{F}^2 dx \leq \int_{S_1} C_G^2(r_2)|f_1(x) - f_2(x)|_{F}^2 dx\,.
\end{equation}
For the remaining terms, we use again the formula \eqref{division2}:
\begin{equation} \label{split0}
\begin{split}
|\Delta_x (G \circ f_1 - G \circ f_2)|_{F}^2 & \leq 2M^2 [ |\Delta_x (f_1-f_2)|_{F}^2 (\int_0^1 |G'(t \Delta_x f_1 + f_1)|_{F} \, dt)^2 \\
& \quad + |\Delta_x f_2|_{F}^2 (\int_0^1 |G'(t \Delta_x f_1 + f_1) - G'(t \Delta_x f_2 + f_2)|_{F} \, dt)^2  ]\,.
\end{split}
\end{equation}
The last term of inequality \eqref{split0} can be bounded as follows,
\begin{equation}
\int_0^1 |G'(t \Delta_x f_1 + f_1) - G'(t \Delta_x f_2 + f_2)|_{F} \, dt \leq 3 C_s |f_1-f_2|_{F_s(F)} C_{G'}(3r_2)\,. 
\end{equation}
We then obtain,
\begin{equation} \label{splitFinal}
\begin{split}
|\Delta_x (G \circ f_1 - G \circ f_2)|_{F}^2 & \leq 2M^2 [ |\Delta_x (f_1-f_2)|_{F}^2 (3 r_2 C_{G'}(3r_2))^2 \\
& \quad + |\Delta_x f_2|_{F}^2 (3 C_s |f_1-f_2|_{F_s(F)} C_{G'}(3r_2))^2 ]\,.
\end{split}
\end{equation}
For notation convenience, we define $K(r_1) :=  3 r_2 C_{G'}(3r_2)$ and we finally get, combining equations \eqref{localfirstlip} and \eqref{splitFinal}
\begin{equation} \label{localfinal}
|G \circ f_1 - G \circ f_2|_{F_s(F)}^2 \leq \max (4M^2 K^2(r_1),C_G^2(r_2)) |f_1-f_2|^2_{F_s(F)} \, .
\end{equation}
This inequality implies directly the last item in the proposition.
\end{proof}

\noindent
We have presented all the material necessary to generalize easily our results. We generalize $F_{s}$ in dimension $n\geq 3$ as it is already done in one and two dimensions.
\begin{defi}
We define by recurrence $F_{s}$ where $s=(s_1,\ldots,s_n) \in \ms{R}_+^n$ by
for $n \geq 3$, 
$$ F_{s_1}(S_1,F_{(s_2, \ldots, s_n)}) = F_{s_1} \otimes \ldots \otimes F_{s_n} \,.$$
We denote its dual $F_{-s}$, $s_* = \min_{i \in [1,n]} s_i$ and $s^* = \max_{i \in [1,n]} s_i$.
\end{defi}
\noindent
To sum up our work to this point, we have defined a RKHS algebra $F_s$ for a multi-index $s$ which is stable under the composition with smooth functions. The continuity of the product is detailed in appendix with proposition \ref{tensor_product_algebras} (it is also a byproduct of the previous result on the composition with smooth functions in proposition \ref{composition2D}). We will now prove that $F_{s}$ is big enough. Provided that linear functions are in $F_s$, the proposition of the composition \ref{composition2D} answers this question. We can have a better result:

\begin{prop} \label{bigenough2}
If $s^*<2$ and $f \in C^n(T_n,\ms{R}^k)$ then $f \in H_s$ and $|f|_{H_s} \leq c_{n}|f|_{n,\infty}$.
\end{prop}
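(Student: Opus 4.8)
The plan is to work directly with the tensor Haar basis. In dimension $n$ the space $H_{s} = H_{s_1}\otimes\cdots\otimes H_{s_n}$ carries the orthonormal basis $\Psi_N = \psi_{n_1,k_1}\otimes\cdots\otimes\psi_{n_n,k_n}$, where $N=((n_1,k_1),\ldots,(n_n,k_n))$ runs over all admissible multi-indices ($n_i\geq -1$, $k_i\in A_{n_i}$), and the norm reads $|f|_{H_{s}}^2 = \sum_N 2^{\langle N,s\rangle}|f_N|^2$ with $\langle N,s\rangle=\sum_{i=1}^n n_i s_i$ and $f_N=\langle f,\Psi_N\rangle$. It therefore suffices to bound each coefficient $f_N$ in terms of the derivatives of $f$ and then to sum the resulting series. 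I would first partition the multi-indices according to the \emph{active set} $T(N)=\{\,i:n_i\geq 0\,\}$: for $i\notin T$ one has $n_i=-1$ and $\psi_{-1,0}=1$, so that direction is merely averaged, whereas for $i\in T$ the factor $\psi_{n_i,k_i}$ has zero mean and supplies cancellation.

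The core of the argument is a coefficient estimate obtained by iterating, direction by direction, the one–dimensional computation already implicit in Proposition~\ref{contains_lip}. For a single active direction $i$, writing $\psi_{n_i,k_i}$ as $\pm 2^{n_i/2}$ on the two halves of an interval of length $2^{-n_i}$ and using the fundamental theorem of calculus in the variable $x_i$, one gains a factor $2^{-3n_i/2}$ at the cost of one derivative $\partial_i$. Applying this successively in every active direction (legitimate by Fubini, holding the remaining variables fixed) yields
$$
|f_N| \;\leq\; c\,\Big(\prod_{i\in T}2^{-3n_i/2}\Big)\,\big|\,\partial^{T} f\,\big|_{\infty},\qquad \partial^{T}:=\prod_{i\in T}\partial_i .
$$
The decisive point is that exactly one derivative is spent per active direction, so $\partial^{T} f$ has total order $|T|\leq n$ and is controlled by the $C^n$ norm, $|\partial^{T} f|_\infty\leq |f|_{n,\infty}$. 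This is precisely the $n$-dimensional analogue of the mixed $\partial_1$-Lipschitz hypothesis of Proposition~\ref{bigenough}, where $n=2$ and the single mixed derivative $\partial_1\partial_2 f$ already appears.

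It remains to sum. For a fixed active set $T$, summing over the positions $k_i\in A_{n_i}$ (there are $2^{n_i}$ of them for $i\in T$) turns the factor $\prod_{i\in T}2^{n_i s_i}2^{-3n_i}$ into $\prod_{i\in T}2^{n_i(s_i-2)}$, whose sum over the scales $n_i\geq 0$ is the convergent geometric product $\prod_{i\in T}(1-2^{s_i-2})^{-1}$ exactly because $s_i\leq s^*<2$; the passive directions contribute only the bounded constants $\prod_{j\notin T}2^{-s_j}$, together with the mean term $|f|_\infty^2$ in the case $T=\emptyset$. Since there are only $2^n$ active sets, collecting these finitely many finite contributions gives $|f|_{H_{s}}^2\leq c_n^2\,|f|_{n,\infty}^2$ with $c_n$ depending only on $n$ and on $s^*$ through the geometric ratios. \emph{The main obstacle}, and the only genuinely $n$-dimensional feature, is the correct handling of the constant ($n_i=-1$) directions: because $\psi_{-1,0}$ carries no cancellation, neither a derivative nor any decay can be extracted there, so one must organise the estimate around the active set and check that the iterated one-dimensional cancellations combine — via Fubini and the continuity of the mixed partials guaranteed by $f\in C^n$ — into the single displayed bound.
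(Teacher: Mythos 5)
Your argument is correct for the statement as literally written, but it takes a genuinely different route from the paper. The paper proves this by induction on the dimension, exploiting the recursive definition $F_{s}=F_{s_1}(S_1,F_{(s_2,\ldots,s_n)})$: it bounds the vector-valued first difference $|f(x+2^{-(n+1)})-f(x)|_{F_{(s_2,\ldots,s_n)}}$ by $\int|\partial_1 f|_{F_{(s_2,\ldots,s_n)}}$, invokes the inductive bound $c_{n-1}|\partial_1 f|_{n-1,\infty}$, and sums the geometric series $\sum 2^{n(s-2)}$ — spending exactly one derivative per level, just as you spend one derivative per active direction. You instead work on the frequency side, estimating each tensor-Haar coefficient by iterated cancellation, $|f_N|\leq c\prod_{i\in T}2^{-3n_i/2}|\partial^Tf|_\infty$, and summing the weighted series organized by active set; your coefficient bound and the bookkeeping ($2^{n_i}$ values of $k_i$ turning $2^{n_is_i-3n_i}$ into $2^{n_i(s_i-2)}$, convergent since $s_i\leq s^*<2$) are both correct. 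Your version is more self-contained (no appeal to the recursive structure of the norm) and makes the role of $s^*<2$ and of the mixed derivative $\partial^Tf$ completely transparent, at the cost of being tied to the Haar basis of $H_s$ rather than to the difference-integral norm of $F_s$.

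That last point is the one caveat you should be aware of. Although the proposition is stated with $H_s$, the paper's own proof actually bounds $|f|_{F_s}$, and it is the stronger conclusion $f\in F_s$ that is used afterwards (Theorem \ref{resume_P_Q} asserts $C^{p}_{\text{dyad}}(T_n,\ms{R}^k)\subset F_s$, and the spaces $P=F_{-s}$, $Q=F_s$ are what the SDE lives on). Since $|f|_{H_s}\leq|f|_{F_s}$ and a bound on the Haar coefficients at scale $n$ does not control the integrals $\int_{I_{n,k}}|f(x+2^{-(n+1)})-f(x)|^2dx$ appearing in the $F_s$ norm, your estimate does not by itself give $f\in F_s$. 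The fix is routine — run exactly your cancellation argument on the iterated second differences $\Delta_{i_1,n_{i_1}}\cdots\Delta_{i_r,n_{i_r}}f$ that define $F_{s}$ (as in Proposition \ref{bigenough} for $n=2$), which yields $|f|_{F_s}\leq c_n|f|_{n,\infty}$ with the same counting — but as written your proof establishes only the $H_s$ membership stated, not the $F_s$ membership the paper needs downstream.
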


\begin{proof}
By recurrence, this true for $n=1$.
With the inequality, 
\begin{equation}
\begin{aligned}
|f|^2_{F_s} & \leq \int_{S_1} c_{n-1}^2|f(x)|_{n-1,\infty}^2 dx + \sum_{n,k} 2^{ns-1}\int_{I_{n,k}} (\int_x^{x+\frac{1}{2^{n+1}}} c_{n-1} |\partial_1 f|_{n-1,\infty} dx)^2\,, \\
|f|^2_{F_s} & \leq c_{n-1}^2 |f|_{n,\infty}^2 (1 + \sum_{n=0}^{\infty} 2^{n(s-2)-4})\,.
\end{aligned}
\end{equation}

\noindent
We have the result with $c_n^2 = c_{n-1}^2 (1 + \sum_{n=0}^{\infty} 2^{n(s-2)-4})$.
\end{proof}
As a direct application of this proposition, we get
\begin{defi}
If $n \geq 2$, a dyadic partition of $T_n$ is a product of a dyadic partitions in one dimension.
\end{defi}
\begin{prop}
If $\max s<2$ and $f \in L^2(T_n,\ms{R}^k)$ such that there exists a dyadic partition on which the restriction of $f$ is $C^n$ then $f \in H_s$.
\end{prop}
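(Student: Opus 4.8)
The plan is to reduce the piecewise statement to the globally smooth one already established in Proposition \ref{bigenough2}. Recall that a dyadic partition of $T_n$ is a product of one-dimensional dyadic partitions, so after refining we may assume that all partition points are multiples of $2^{-N}$ in every coordinate for a single integer $N$. On each closed product cell $f$ is $C^n$, hence $f$ together with all its partial derivatives up to order $n$ is bounded; in particular $f \in L^{\infty}(T_n)$ and the mixed derivative $\partial_1 \cdots \partial_n f$ is bounded on each cell by a constant $C$. I will in fact prove the stronger statement $f \in F_s$ and then invoke the inclusion $F_s \subset H_s$.

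The mechanism, already visible in the one-dimensional case of Proposition \ref{contains_lip}, is a splitting of the scales at the level $N$ of the partition. For the difference term attached to a scale $n_i \geq N$ in the $i$-th coordinate, the interval $I_{n_i,k_i}$ and its translate $I_{n_i,k_i} + 2^{-(n_i+1)}$ lie in the same refining dyadic cell, hence inside a single piece of the partition; on that piece $f$ is smooth and the fundamental theorem of calculus gives $|f(\cdot + 2^{-(n_i+1)}) - f(\cdot)| \leq 2^{-(n_i+1)} |\partial_i f|_{\infty}$, exactly as in Proposition \ref{bigenough2}. After summing over the $2^{n_i}$ translates and inserting the weight $2^{n_i s_i}$ this produces the geometric factor $\sum_{n_i \geq N} 2^{n_i(s_i-2)}$, which converges precisely because $s_i < 2$. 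For the remaining scales $n_i < N$ there are only finitely many indices $(n_i,k_i)$, and each corresponding difference integral is bounded by $4|f|_\infty^2$ times the measure of the integration domain; finitely many finite terms cause no harm.

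To organize the $n$-fold version of this splitting cleanly I would argue by induction on the dimension using the recursive definition $F_s = F_{s_1}(S_1, F_{(s_2,\ldots,s_n)})$. For almost every $x_1$ (all but the finitely many values lying on a partition hyperplane) the slice $f(x_1,\cdot)$ is $C^n$, hence piecewise $C^{n-1}$ on the induced dyadic partition of $T_{n-1}$, so the induction hypothesis places $f(x_1,\cdot)$ in $F_{(s_2,\ldots,s_n)}$ with a norm controlled by the (uniformly bounded) cellwise $C^{n-1}$ data. It then remains to check the two $F_{s_1}$-valued terms of the norm. The $L^2$ term $\int_{S_1} |f(x_1,\cdot)|_{F_{(s_2,\ldots,s_n)}}^2\,dx_1$ is finite by this uniform bound. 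For the difference term at scale $n_1$ I would use the vector-valued identity $f(x_1+h,\cdot) - f(x_1,\cdot) = \int_{x_1}^{x_1+h}\partial_1 f(u,\cdot)\,du$ with $h = 2^{-(n_1+1)}$, valid as a Bochner integral in $F_{(s_2,\ldots,s_n)}$ when $n_1 \geq N$ because the slice stays in one partition strip; since $\partial_1 f(u,\cdot)$ is again piecewise $C^{n-1}$, the induction hypothesis bounds $|\partial_1 f(u,\cdot)|_{F_{(s_2,\ldots,s_n)}}$ uniformly in $u$, and the scale splitting of the previous paragraph applies verbatim to the $F_{(s_2,\ldots,s_n)}$-valued differences.

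The main obstacle is precisely the vector-valued step: one has to justify that $x_1 \mapsto f(x_1,\cdot)$ is a Bochner-measurable map into $F_{(s_2,\ldots,s_n)}$, that the fundamental theorem of calculus holds in this Hilbert-space-valued sense, and that the piecewise-$C^{n-1}$ norm of $\partial_1 f(u,\cdot)$ is bounded uniformly in $u$ (this uses that $f$ is $C^n$, so that $\partial_1 f$ and the mixed partial $\partial_1\cdots\partial_n f$ are continuous, hence bounded, on each closed cell). Once these regularity points are granted, the remaining estimates are the same geometric-series bounds driven by the conditions $s_i < 2$ together with the finiteness of the finitely many coarse-scale terms, exactly as in the proofs of Propositions \ref{contains_lip} and \ref{bigenough2}.
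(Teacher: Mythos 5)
Your argument is correct and follows exactly the route the paper intends: the paper states this result as a ``direct application'' of Proposition~\ref{bigenough2} (together with the observation, made in the remark after Proposition~\ref{bigenough}, that fine-scale dyadic differences never cross a dyadic partition boundary) and writes out no proof, and your scale-splitting at the partition level $N$ combined with the dimensional induction through $F_{s_1}(S_1,F_{(s_2,\ldots,s_n)})$ is precisely the omitted argument. The Bochner-measurability and vector-valued fundamental-theorem points you flag are genuine but routine for cellwise-$C^n$ data, so there is no gap.
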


\noindent
The space of functions such that the restriction is $C^p$ on a dyadic will be denoted $C_{\text{dyad}}^{p}(T_n,\ms{R}^k)$. In the next section, we will present the cylindrical Brownian motion and we will prove that almost surely its trajectories are continuous paths in $H_{-s}$, therefore in $F_{-s}$. To sum up the properties of $F_s$,
\begin{theo} \label{resume_P_Q}
The Hilbert space $F_s \subset L^2(T_n,\ms{R}^{k})$ satisfies the following properties
\begin{itemize}
\item the left composition with a function $G \in C^{n+1}(\ms{R}^k,\ms{R}^k)$ is locally Lipschitz,
\item if $s_*>1$ then $F_s$ is an algebra with continuous product,
\item the cylindrical Brownian motion defines a continuous random process in $F_{-s}$,
\item if $s^*<2$ then $C_{\text{dyad}}^{p}(T_n,\ms{R}^k) \subset F_s$.
\end{itemize}
\end{theo}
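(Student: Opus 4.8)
The plan is to treat Theorem~\ref{resume_P_Q} as a recapitulation: each of the four bullets has already been established in this section (or is announced for the next one), so the only genuine work is to organize the $n$-dimensional statements as an induction on the number of factors in the tensor decomposition $F_s=F_{s_1}\otimes\cdots\otimes F_{s_n}$ and to check that the hypotheses feeding each quoted proposition survive that recurrence.

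For the composition property (first bullet) I would argue by induction on $n$. The base case $n=1$ is Proposition~\ref{loclip}, where a real $C^1$ map with locally Lipschitz derivative composes Lipschitzly on balls of $F_{s_1}$ (here $C^{n+1}=C^2$, and $C^2$ guarantees the $G'$-regularity that proposition requires); the vector-valued case $\ms{R}^k\to\ms{R}^k$ follows componentwise. For the inductive step I would write $F_s=F_{s_1}(S_1,F_{(s_2,\dots,s_n)})$ and apply Proposition~\ref{composition2D} with $F=F_{(s_2,\dots,s_n)}$: by the inductive hypothesis composition with a $C^n$ map is locally Lipschitz on $F$, so Proposition~\ref{composition2D} upgrades this to local Lipschitz composition with a $C^{n+1}$ map on $F_{s_1}(F)=F_s$, exactly as required in dimension $n$. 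To invoke Proposition~\ref{composition2D} one must know that the inner space $F$ is a RKHS algebra continuously embedded in $L^\infty$; this is what Proposition~\ref{normes} (RKHS and sup-norm control) together with the algebra structure of the second bullet provides. The cleanest route is therefore a \emph{simultaneous} induction carrying the RKHS, $L^\infty$-embedding, algebra, and composition statements together, with the polynomial growth bound on the Lipschitz constant propagated through estimate~\eqref{localfinal}.

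The algebra bullet (second item) follows, in dimension one, from the explicit estimate $|fg|_{F_s}\le 2C_s|f|_{F_s}|g|_{F_s}$ valid for $s>1$, and in arbitrary dimension from the stability of Hilbert algebras under tensor product (Proposition~\ref{tensor_product_algebras}, or equivalently as the byproduct of Proposition~\ref{composition2D} obtained by taking $G$ to be the product map); the condition $s_*>1$ is precisely what makes every factor $F_{s_i}$ both a RKHS and an algebra. The embedding bullet (fourth item) is a direct restatement of Proposition~\ref{bigenough2} and its dyadic corollary: for $s^*<2$ a function that is $C^n$ on each cell of a dyadic partition has finite $F_s$ norm, the relevant series converging because each exponent $n_i(s_i-2)$ is negative. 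For the Brownian bullet (third item) I would use that duality gives $H_{-s}=(H_s)^*\subset (F_s)^*=F_{-s}$ (since $F_s\subset H_s$), so it suffices to show the cylindrical motion lives continuously in $H_{-s}$; this holds as soon as the canonical embedding $H\hookrightarrow H_{-s}$ is Hilbert--Schmidt, which on the tensor Haar basis reduces to the convergence of $\prod_i\sum_{n_i}2^{n_i(1-s_i)}$, i.e. again to $s_*>1$, and this computation is carried out in the following section.

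I expect the main obstacle to be the bookkeeping of the simultaneous induction for the first bullet: Proposition~\ref{composition2D} adds a single $F_s$-factor over an \emph{abstract} RKHS algebra $F$, so one must verify that at each stage the inner space still satisfies the full package of structural hypotheses (RKHS, continuous $L^\infty$ injection, algebra with an explicit multiplication constant, and a polynomially controlled composition Lipschitz constant) before the next factor may be adjoined. Everything else reduces either to a verbatim citation of the quoted propositions or to an elementary geometric-series convergence check.
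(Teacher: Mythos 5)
Your proposal is correct and matches the paper's intent: Theorem~\ref{resume_P_Q} is stated there without a separate proof, explicitly as a summary ("to sum up the properties of $F_s$") of Propositions~\ref{loclip}, \ref{loclip2}, \ref{composition2D}, \ref{bigenough2}, the product-continuity estimates together with Proposition~\ref{tensor_product_algebras}, and the construction of the cylindrical Brownian motion in $H_{-s}\subset F_{-s}$ in the following section. Your simultaneous induction on the tensor factors, carrying the RKHS, $L^\infty$-embedding, algebra and composition properties together through Proposition~\ref{composition2D}, is precisely the recurrence the paper leaves implicit when it says the material generalizes "easily" to dimension $n$.
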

\noindent
We can now deduce an important property to prove the existence for all time of the SDE solutions.

\begin{prop} \label{prop19}
If the kernel $k$ has continuous derivatives,
$$ \frac{\partial^{l+n} k(x,y)}{\partial_l x \partial_n y} \, l,n \in [1,m+2] \, ,$$
a couple $(p,q) \in F_{-s} \times F_{s}$ defines an element of $V$ by $k_qp(x) = \langle k(x,q),p \rangle_{F_s \times F_{-s}}$, 
with $s=(s_1,\ldots,s_m)$. 
Moreover, the following mappings are \lip
\begin{gather}
F_{-s} \times F_{s} \ni (p,q) \mapsto k_qp \circ q \in F_s \, ,\\
F_{-s} \times F_{s} \ni (p,q) \mapsto \langle p, (\partial_x k_qp) \circ q \rangle_{F_{-s} \times F_{s}} \in F_{-s} \, .
\end{gather}
\end{prop}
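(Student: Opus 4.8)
The plan is to realise $k_qp$ as the image of the momentum $p\diamond q$ under the Riesz (kernel) operator $K:V^*\to V$, and then to obtain every \lip estimate by chaining three elementary maps: the duality pairing $F_{-s}\times F_s\to\R$, the composition map of Theorem \ref{resume_P_Q} (Proposition \ref{composition2D}), and the continuous dual product of Property \ref{stab_mult}.

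First I would check that $k_qp$ is well defined and lies in $V$. For fixed $x$, the map $s\mapsto k(x,q(s))$ is the left composition of the smooth function $y\mapsto k(x,y)$ with $q\in F_s$, so it belongs to $F_s$ by the composition property, and pairing with $p\in F_{-s}$ makes $k_qp(x)=\langle k(x,q),p\rangle_{F_s\times F_{-s}}$ well defined pointwise. To place $x\mapsto k_qp(x)$ in $V$, I would introduce $p\diamond q\in V^*$ by $(p\diamond q,v)_{V^*,V}=-\langle p,v\circ q\rangle_{F_{-s},F_s}$. Since $v\mapsto v\circ q$ is linear, the composition estimate of Proposition \ref{composition2D} gives $|v\circ q|_{F_s}\leq C(|q|_{F_s})|v|_V$ with $C$ of linear growth in $|q|_{F_s}$ (Assumption \ref{inj1} provides the control $|v|_{1,\infty}\leq K|v|_V$); hence $|(p\diamond q,v)|\leq K|p|_{F_{-s}}(1+|q|_{F_s})|v|_V$, so $p\diamond q\in V^*$ and $k_qp=K(p\diamond q)\in V$. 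The reproducing property of $k$ gives $K(p\diamond q)(x)=\langle k(x,q),p\rangle$, so the two descriptions of $k_qp$ agree.

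The core estimate is the local \lip dependence of $(p,q)\mapsto k_qp\in V$. I would split
$$k_{q_1}p_1-k_{q_2}p_2=k_{q_1}(p_1-p_2)+(k_{q_1}p_2-k_{q_2}p_2).$$
The first term is linear in $p$ and is bounded in $V$ by $K(1+|q_1|_{F_s})|p_1-p_2|_{F_{-s}}$ by the estimate above. For the second I would test against $v\in V$ and write $((p_2\diamond q_1-p_2\diamond q_2),v)=-\langle p_2,v\circ q_1-v\circ q_2\rangle$, then invoke the local \lip property of the inner composition $q\mapsto v\circ q$ (Proposition \ref{composition2D}, whose prototype \ref{loclip} requires $v\in C^2$, hence smoothness of the kernel) to get $|v\circ q_1-v\circ q_2|_{F_s}\leq C(r)K|v|_V|q_1-q_2|_{F_s}$ on the ball of radius $r$; this yields $|k_{q_1}p_2-k_{q_2}p_2|_V\leq C(r)K|p_2|_{F_{-s}}|q_1-q_2|_{F_s}$. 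Thus $(p,q)\mapsto k_qp$ is locally \lip into $V$, and through the continuous embedding $V\hookrightarrow C^{m+2}$ (guaranteed by the assumed continuity of the kernel derivatives up to order $m+2$) it is locally \lip into $C^{m+2}$.

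It remains to assemble the two maps as compositions of locally \lip maps. For the first, writing
$$k_{q_1}p_1\circ q_1-k_{q_2}p_2\circ q_2=(k_{q_1}p_1\circ q_1-k_{q_1}p_1\circ q_2)+((k_{q_1}p_1-k_{q_2}p_2)\circ q_2),$$
I would bound the first summand by the local \lip property of composition in the inner variable (Proposition \ref{composition2D}) and the second by linearity of $v\mapsto v\circ q_2$ together with the local \lip control of $(p,q)\mapsto k_qp$ in $C^{m+2}$, obtaining that $(p,q)\mapsto k_qp\circ q\in F_s$ is locally \lip. For the second map, $\partial_x$ is a bounded linear map $C^{m+2}\to C^{m+1}$, so $(p,q)\mapsto\partial_xk_qp$ is locally \lip into $C^{m+1}$ and $(\partial_xk_qp)\circ q\in F_s$ is locally \lip by the same composition argument; finally $\langle p,(\partial_xk_qp)\circ q\rangle_{F_{-s}\times F_s}$ is the dual product $p\cdot g$ of Property \ref{stab_mult} with $g=(\partial_xk_qp)\circ q\in F_s$, a continuous bilinear map $F_{-s}\times F_s\to F_{-s}$, and a bounded bilinear map precomposed with locally \lip maps is locally \lip. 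The main obstacle is the regularity bookkeeping together with the uniform-on-balls composition constants: each composition step consumes the hypothesis that the top derivative of the outer function be \lip, and the map $\partial_xk_qp$ lands exactly in $C^{m+1}$, which is precisely the least smoothness that Theorem \ref{resume_P_Q} allows for composition into $F_s$; this is why the kernel must have continuous derivatives up to order $m+2$, and verifying that all composition constants grow at most polynomially in $|p|_{F_{-s}}$ and $|q|_{F_s}$, so that they remain bounded on balls, is the delicate point.
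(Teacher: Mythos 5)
Your proposal is correct in substance but reaches the key assertion $k_qp\in V$ by a genuinely different route from the paper. The paper works pointwise in $x$: it fixes $(p,q)$, notes $k(x,q(\cdot))\in F_s$ for each $x$ by the composition theorem, and then proves by hand that $x\mapsto k_qp(x)$ is continuous, then $C^1$, then by recurrence $C^{m+2}$ --- using uniform continuity of the kernel on compacts, boundedness of $k(x_n,q)$ in $F_s$, and weak convergence tested against $L^1\subset F_{-s}$ --- before concluding $k_qp\in V$ from the finiteness of $\frac12\langle p,k_qp\rangle$. You instead invoke the momentum map: $v\mapsto\langle p,v\circ q\rangle_{F_{-s},F_s}$ is a bounded functional on $V$, so $p\diamond q\in V^*$ and $k_qp=K(p\diamond q)\in V$ in one stroke. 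This is shorter and avoids all the weak-convergence bookkeeping, but it leans on a statement the paper never isolates: that the composition estimate $|v\circ q|_{F_s}\le C(|q|_{F_s})\,|v|_{m+1,\infty}$ is \emph{linear in the outer function} $v$, uniformly over a ball of $q$'s (and again for the difference term $|w\circ q_2|_{F_s}\le C|w|_V$ with $w=k_{q_1}p_1-k_{q_2}p_2$). Proposition \ref{composition2D} is stated for a fixed outer function $G$, so this uniformity must be extracted from its proof, where the constants are built from $\sup|G'|,\sup|G''|,\dots$; it is true, and the paper itself uses the analogous bound $|dv_{x}\circ q|_{F_s}\le C|v_x|_V\,K$ in the proof of Proposition \ref{prop31-1-2}, so I would not call this a gap, but you should state and prove that uniform version explicitly rather than cite \ref{composition2D} for it. For the two Lipschitz claims the arguments essentially coincide (split the difference, apply the local Lipschitz composition lemma, use continuity of the dual pairing and of the product of Property \ref{stab_mult}), with yours being the more detailed; your accounting of why exactly $m+2$ kernel derivatives are needed --- $\partial_xk_qp$ must land in $C^{m+1}$ to compose into $F_s$ over $T_m$ --- matches what the paper does implicitly.
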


\begin{proof}
First, for any $x$, $k_qp(x)$ is well defined: since $y \mapsto k(x,y)$ is $C^{m+2}$, we apply the Theorem \ref{resume_P_Q} to get $k(x,q) \in F_s$. Hence, $k_qp(x)$ is well defined. Our goal is to prove that $k_qp(x)$ is $C^{m+2}$ and $\partial_x k_qp$ is $C^{m+1}$. Thus we would obtain that $k_qp \in V$. The composition with a $C^{m+1}$ function being locally \lip on $F_{s}$, the results will follow.

\vspace{0.2cm}
\noindent
To prove the continuity of $x \mapsto k_qp(x)$, we just need the weak convergence in $F_s$: $$k(x_n,q(.)) \rightharpoonup_{n \mapsto \infty} k(x,q(.)) $$ 
when $\lim_{n \mapsto \infty} x_n = x$. 
\\
We first prove that $k(x_n,q(.)) \rightarrow_{L^{\infty}} k(x,q(.))$: thanks to the injection $F_s \hookrightarrow L^{\infty}(T_m,\ms{R}^d)$, $|q|_{\infty} \leq r_0$. Since $k$ is continuous, it is uniformly continuous on $W \times \overline{B(0,r_0)}$ with $W$ a compact neighborhood of $x$. 
\\
Then, for any $\ve>0$, there exists $\delta>0$ such that if $|x_n-x|\leq \delta$, we have $|k(x_n,y)-k(x,y)| \leq \ve$ and as a consequence $|k(x_n,q)-k(x,q)|_{\infty} \leq \ve$.

\vspace{0.2cm}

We now prove that $k(x_n,q)$ is bounded in $F_s$: as $\partial^n_y k(x,y)$ for $n\in [1,m+1]$ is bounded on any compact set, we get that $k(x_n,q(.)) \in F_s$ is bounded in $F_s$. 
\\
Since $L^1 \subset L^{\infty}(T_m)' \subset F_{-s}$ (thanks to $F_s \hookrightarrow L^{\infty}(T_m)$) every weak subsequence of $k(x_n,q(.))$ converges to $k(x,q(.))$. Then, $\lim_{n \mapsto \infty} k_qp(x_n) = k_qp(x)$. Therefore $k_qp$ is continuous. By the same proof, $k_qp$ is a $C^1$ function: 
\\
as $k$ is $C^{m+1}$, we apply the same argument to $\frac{k_qp(x+t_nv)-k_qp(x)}{t_n} - \partial_1 k_qp(x)(v)$ for $v \in \ms{R}^d$ and $t\neq0$. We have,
$$\frac{k_qp(x+t_nv)-k_qp(x)}{t_n} - \partial_1 k_qp(x)(v) = \int_0^1 \partial_1 k_qp(x+st_nv,q)(v)p - \partial_1 k_qp(x)(v) ds \, .$$
The sequence $\int_0^1 \partial_1 k_qp(x+st_nv,q(.))(v) - \partial_1 k(x,q(.))(v) ds$ converges in $L^{\infty}$ to $0$ by uniform continuity of $\partial_1 k$ on every compact set. It is also bounded in $F_s$ since $\partial_1 k$ is $C^{m+1}$ in the second variable. We get the same conclusion as above. 
\\
Since the pointwise derivative $\partial_1 k_qp(v)$ is continuous (by the same argument than for $k_qp$ we have that $\partial_1 k_qp(v)$ is continuous) $d[k_qp] = (\partial_1 k)_qp$.
\\
By recurrence the result is extended to $\partial^n_x kq_p$ for $n \in [1,n+2]$: we obtain that $H = \frac{1}{2}\langle p,k_qp \rangle < \infty$ and $k_qp \in V$.

\vspace{0.2cm}

To prove that the mapping $(p,q) \mapsto k_qp$ is \lip on every compact, the composition is \lip on every bounded ball if $\partial_1k$ is $C^{m+1}$ in the second variable. Hence we deduce that for each $x_0$, the maps $q \in F_s \mapsto k(x_0,q) \in F_s$ and $q \in F_s \mapsto \partial_1 k(x_0,q) \in F_s$ are Lipschitz. The \lip constant can be bounded for $x_0 \in K$ by continuity of the kernel derivatives.
\\
As the dual pairing is Lipschitz we obtain the result. Then by triangular inequality we also obtain that $k_qp \circ q$ is \lip in both variables and so is $\langle p, (\partial_x k_qp) \circ q \rangle$.
\end{proof}

\section{Cylindrical Brownian motion\index{Cylindrical Brownian Motion} and stochastic integral\index{Stochastic integral}} \label{IntSto}
The goal of this section is to define the stochastic integral $\int_0^T u(x) dB_x$ for $u$ a suitable random variable with values in $F_s$. We will give a self-contained presentation inspired by \cite{1140.60034} provided basic knowledge of the It\^{o} stochastic integral. However we aim at presenting it in order to keep up the finite-dimensional approximations for landmarks.
\\
First we provide an elementary and self-contained introduction to the cylindrical Brownian motion in $H$. The construction here puts the emphasis on the finite-dimensional approximations obtained by projection on finite-dimensional subspaces which are the counterpart in the noise model of the finite-dimensional approach with landmarks. We will then present in \ref{stochastic_int} the stochastic integral.

\subsection{Cylindrical Brownian motion in $L^2(S_1,\mathbb{R})$} \label{noise_intro}
We start with the simplest situation where the underlying space in the one dimensional torus $S_1$ and $H=L^2(S_1,\mathbb{R})$.

Let $(B^{n,k})_{n\geq 0,k \in A_n}$ be a collection of continuous independent standard one-dimensional Brownian motions (BM) on $(\Omega,\mathcal{F},P)$ a probability space. For any $n\geq 0$, and consider the $H$ valued random process
$$W^n_t\doteq \sum_{l=-1}^{n-1} \sum_{k \in A_l} B^{l,k}_t\psi_{l,k} \, .$$

At a given time, the coefficients of $W^n_t$ in the orthonormal basis of $H$ are i.i.d. Gaussian variables with variance $t$ (truncated at rank $n$). Moreover, since
\begin{equation}
H^n\doteq \text{Span}\{\, \psi_{l,k}, -1 \leq l< n,\ k \in A_n \, \}\label{eq:26-1}
\end{equation}
is the $2^{n}$-dimensional space of  piecewise constant on the dyadic partition of $S^1$ at scale $2^{-n}$, $W^n_t\in H^n$ is obviously a random piecewise constant function. Moreover, for any $f\in H^n$ $f\doteq \sum_{l=-1}^{n-1}\sum_{k \in A_l} f_{l,k}\psi_{l,k}$, we get 
\begin{equation}
\langle f,W^n_t\rangle_H=\sum_{l=-1}^{n-1}\sum_{k \in A_l} f_{l,k}B^{l,k}_t \sim \mathcal{N}(0,|f|_H^2)\,.\label{eq:15-1}
\end{equation}
More generally, for any $f_1,f_2\in H^n$, $(\langle f_1,W^n_t\rangle_H,\langle f_2,W^n_t\rangle_H)$ are jointly Gaussian, centred with covariance  
\begin{equation}
\Gamma_{i,j}\doteq E(\langle f_i,W^n_t\rangle_H \langle f_j,W^n_t\rangle_H)=\langle f_i,f_j\rangle_H\,.\label{eq:15-2}
\end{equation}
In particular, if we introduce $\phi_{n,k}\doteq 2^{n/2}\mathbf{1}_{[k2^{-n},(k+1)2^{-n}[}$ for any $0\leq k\leq 2^n-1$, the $\phi_{n,k}$'s define an orthonormal basis of $H^n$. Denoting $\gamma^{n,k}_t\doteq \langle \phi_{n,k},W^n_t\rangle_H$, we get from \eqref{eq:15-1} and \eqref{eq:15-2} that 
\begin{equation}
W^n_t=2^{n/2}\sum_{k=0}^{2^n-1}\gamma^{n,k}_t\mathbf{1}_{[k2^{-n},(k+1)2^{-n}[}\label{eq:27-3}
\end{equation}
where $(\gamma^{n,k}_t)_{t\geq 0}$ is a i.i.d. family of $2^n$ standard Brownian motions indexed by $k$.

A cylindrical Brownian motion on $H$ is the limit of $W^n_t$ when $n\to\infty$. A well known but important fact is that this limit is not defined in $H$ since 
$E(|W^{n+j'}_t|^2_n)=t2^n\to +\infty$ but in any $H_{-s}$ for $s>1$. 
\noindent
Indeed, $|W^{n+j}_t-W^{n+j'}_t|^2_{H_{-s}}\leq R_{n,t}^2\doteq \sum_{m=n+1}^\infty 2^{-ms}\sum_{k \in A_m}|B^{m,k}_t|^2$ so that 
$$E(\sup_{j,j'\geq 0}|W^{n+j}_t-W^{n+j'}_t|^2_{H_{-s}})\leq E(R^2_{n,t})=C_{n,s}t$$
with $C_{n,s}\doteq 2^{(n+1)(1-s)}/(1-2^{1-s})$. Therefore $a.s.$, $W^n_t$ is a Cauchy sequence in $H_{-s}$ and one can define $W_t$ as the limit in $H_{-s}$ of $W^n_t$. In fact, it will be helpful to do a little more. Since the process $t\to W^n_t$ has continuous trajectories in $H_s$, one can look for a limit in $C(\mathbb{R}_+,H_s)$ for the uniform topology.

For any $T>0$, we have
$$\sup_{j,j'\geq 0}\sup_{0\leq t\leq T}|W^{n+j}_t-W^{n+j'}_t|_{H_{-s}}^2\leq R_n^2\doteq \sum_{l=n+1}^\infty 2^{-ls}\sum_{k \in A_l}\sup_{0\leq t\leq T}|B^{l,k}_t|^2$$
so that using the Doob inequality $E(\sup_{0\leq t\leq T}B_t^2)\leq 4E(B_t^2)$ for the standard Brownian motion, we get
$$E\left(\sup_{j\geq 0}\sup_{0\leq t\leq T}|W^{n+j}_t-W^{n}_t|_{H_{-s}}^2\right)\leq 4C_{n,s}T\,.$$

Hence a.s. $t\to W^n_t$ is a Cauchy sequence in $C([0,T],H_{-s})$. Since $T>0$ is arbitrary, we can define a limit process $W$ living in $C(\mathbb{R}_+,H_{-s})$ such that for any $T\geq 0$
$$E(\sup_{0\leq t\leq T}|W_t-W^n_t|_{H_{-s}}^2)\leq 4C_{n,s}T\,.$$ 

\subsection{Cylindrical Brownian motion in $L^2(T_m,\mathbb{R})$}
For a general $m\geq 1$, since $L^2(T_m,\mathbb{R})=L^2(T_1,\mathbb{R})\otimes\cdots\otimes L^2(T_1,\mathbb{R})$, the construction of the $W^n$ is built from the
Hilbert basis obtained by usual tensorisation. To be more explicit, we denote by 
$$\psi^m_{l,k}\doteq \otimes_{i=1}^m \psi_{l^{(i)},k^{(i)}}$$
for any $l=(l^{(i)})_{1\leq i\leq m}$ and $k=(k^{(i)})_{1\leq i\leq m}$ such that $l^{(i)}\geq -1$ and $k^{(i)} \in A_{l^(i)}$ for $l^{(i)}\geq -1$. Now, if $I_n\doteq\{\ (l,k)\ |\ l^{(i)}\leq n,\ 1\leq i\leq m\}$ 
$$H^n=\text{span}\{\psi^m_{l,k}\ |\ (l,k)\in I_n\}$$
and $I_\infty=\cup_{n\geq 0}I_n$, we define from a family $(B^{l,k})_{(l,k)\in I_\infty}$ of i.i.d. standard BM 
\begin{equation}
W^n_t\doteq \sum_{(l,k)\in I_n}B^{l,k}_t\psi^m_{l,k} \, .\label{eq:01-2-9}
\end{equation}
As previously, if $f\in H^n$, we have $t\to |f|_H^{-1/2}\langle f,W^n_t\rangle$ is a standard BM and for any $f,g\in H^n$, 
\begin{equation}
(\langle f,W^n_t\rangle \langle g,W^n_t\rangle)=t\langle f,g\rangle_H \,.  \label{eq:01-2-10}
\end{equation}
\\
In particular, if 
$$\phi^m_{n,k}\doteq 2^{mn/2}\mathbf{1}_{\prod_{i=1}^m[k^{(i)}2^{-n},(k^{(i)}+1)2^{-n}[},\ k^{(i)}\in\llbracket 0,2^n\llbracket$$ 
the family $(\phi^m_{n,k})_{k\in{\llbracket 0, 2^n\llbracket}^m}$ is an orthonormal basis of $H^n$ based on a dyadic partition of $T_m$ in cells  of size $2^{-n}\times\cdots \times 2^{-n}$. As previously, we have
\begin{equation}
W^n_t=\sum_{k\in\{0,\cdots 2^n-1\}^m}\langle W^n_t,\phi^m_{n,k}\rangle \phi^m_{n,k}=2^{nm/2}\sum_{k\in\{0,\cdots 2^n-1\}^m}\gamma^{n,k}_t\mathbf{1}_{\prod_{i=1}^m[k^{(i)}2^{-n},(k^{(i)}+1)2^{-n}[}\label{eq:01-2-11}
\end{equation}
where $(\gamma^{n,k})$ is a family of i.i.d. BM.

For $H_s=\otimes_{i=1}^mH_{-s_i}$, with $s=(s_1,\cdots,s_m)$, we get immediately that $W^n_t$ is a Cauchy sequence in $H_{_s}$ as soon as $s_*\doteq \min_{i}s_i>1$ converging uniformly on any time interval $[0,T]$ to a process $W\in C(\mathbb{R}_+,H_{-s})$. More precisely, 
\begin{equation}
  \label{eq:01-2-8}
  E(\sup_{0\leq t\leq T}|W_t-W^n_t|_{H_{-s}})\leq 4C_{n,s}T
\end{equation}
with $C_{n,s_*}=(\sum_{l=n+1}^\infty 2^{-l(s_*-1)})^m$.
\subsection{Cylindrical Brownian motion in $H=L^2(T_m,\mathbb{R}^d)$}
For a general $d\geq 1$, the previous definition on cylindrical Brownian motion can be extended easily in the more general situation where $H=L^2(T_m,\mathbb{R}^d)$. Indeed, we define $W\doteq (W^{(1)},\cdots,W^{(d)})$ where $(W^{(i)})_{1\leq i\leq d}$ is a family of i.i.d. cylindrical Brownian motions in $L^2(T_m,\mathbb{R})$ as defined previously. The finite dimension approximations are defined accordingly on 
\begin{equation}
  \label{eq:01-2-12}
  H^n\doteq\text{span}\{ (\psi^m_{l_1,k_1},\cdots,\psi^m_{l_d,k_d})\ |\ (l_j,k_j)\in I_n\ 1\leq j\leq d\ \}\,.
\end{equation}
In this case, there is an analog of inequality \eqref{eq:01-2-12}, with the constant $$C_{n,s_*}=d(\sum_{l=n+1}^\infty 2^{-l(s_*-1)})^m.$$
\subsection{Stochastic integral}

We assume basic knowledge of the It\^{o} integral and we directly deal with the general case on $H=L^2(T_m, \ms{R}^d)$. We recall that $F_{s} \hookrightarrow H \hookrightarrow F_{-s}$. Having in mind applications that we will develop later, we need to introduce the space of integrands. 
\\
Let us denote by $L(F_{-s})$ the space of continuous endomorphisms of $F_{-s}$. If $u \in L(F_{-s})$, then there exists a constant denoted by $|u|$ such that 
$$ |u(e)|_{F_{-s}} \leq |u||e|_{F_{-s}}.$$
\begin{defi}
The set $L_{T}$ contains all random variables $u: [0,T] \times \Omega \mapsto L(F_{-s})$ verifying,
\begin{itemize}
\item $(t,\omega) \rightarrow u(t,\omega)$ is $\mathcal{B}[0,T] \otimes \mathcal{A}$ measurable, 
\item $ \omega \rightarrow u(t,\omega)$ is $\mathcal F_t-$measurable for $t \in [0,T]$,
\item $\int_0^T E[|u(t)|^2] \, dt < \infty \, .$
\end{itemize} 
\end{defi}

Now, we want to give a sense to
\begin{equation} \label{stochastic_int}
\int_0^T u(t)dW_t \, ,
\end{equation}
for $u \in L_T$.
To this end, we first define
\begin{equation} \label{finite_dim}
\int_0^T u(t)dW_t^n = \sum_{l' \in \ms{N}} (\int_0^T \sum_{l,k}^{l<n} u_{l,k}^{l'} dB_t^{l,k}) e_{l'}  \, ,
\end{equation}
with $(e_{l'})_{l' \in \ms{N}}$ an orthonormal basis of $F_{-s}$. 
Each term $\int_0^T \sum_{l,k}^{l<n} u_{l,k}^{l'} dB_t^{l,k}$ are well-defined since it is a finite sum of It\^{o} integrals and we have with the Doob inequality
\begin{equation}
E[\sup_{t \in [0,T]} |\int_0^t \sum_{l,k}^{l<n} u_{l,k}^{l',k'} dB_t^{l,k}|^2] \leq 4 \sum_{l,k}^{l<n} \int_0^T E[(u_{l,k}^{l'})^2] ds \, .
\end{equation}
Therefore we get,
\begin{multline} \label{Doob_infinite}
E[ \sup_{i' \geq i \geq 0} \sup_{t \in [0,T]} |\int_0^t u(t)dW_t^{n+i'} - \int_0^T u(t)dW_t^{n+i}|^2_{F_{-s}} ] \leq 4 \sum_{l' \in \ms{N}} \sum_{l=n+i}^{n+i'-1} \sum_{k \in A_l} (\int_0^T E[(u_{l,k}^{l'})^2] ds) |e_{l'}|^2_{F_{-s}} \\
\leq (\sum_{l=n+i}^{n+i'-1} \sum_{k \in A_l} |\psi_{n,k}|^2_{F_{-s}}) \int_0^T E[|u(s)|^2] ds \,, 
\end{multline} 
since $$ \sum_{l' \in \ms{N}} (\int_0^T E[(u_{l,k}^{l'})^2] ds) |e_{l'}|^2_{F_{-s}} = \int_0^T E[|u(s)(\psi_{l,k})|^2] ds\,.$$
We also have $|\psi_{n,k}|_{F_{-s}} \leq |\psi_{n,k}|_{H_{-s}}$, and then
$$ \sum_{l=n+i}^{n+i'-1} \sum_{k \in A_l} |\psi_{n,k}|^2_{F_{-s}} \leq \sum_{l=n+i}^{n+i'-1} \sum_{k \in A_l} |\psi_{n,k}|^2_{H_{-s}} \leq (C_{n+i,s_*} - C_{n+i',s_*})  \,.$$
Hence, $\int_0^T u(t)dW_t^n$ is a Cauchy sequence in $C([0,T],F_{-s},|.|_{\infty})$.
\\
The next property is the application of the previous Doob inequality \eqref{Doob_infinite} when $\sigma$ is bounded.
\begin{prop}
Assume that $\sigma \in L_T$ is bounded by $|\sigma|_{\infty}$ then we have,
\begin{align}
E[\sup_{t \in [0,T]} \left( \int_0^t \sigma(s)(dW^{n+l}_s-dW^{n}_s) \right)^2] \leq 4 |\sigma|_{\infty}^2 T (C_{n+i,s_*} - C_{n+i',s_*}) \, .
\end{align}
\end{prop}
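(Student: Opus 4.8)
The plan is to recognise this statement as a direct specialisation of the Doob-type estimate \eqref{Doob_infinite} to the case of a uniformly bounded integrand, so that no new machinery is needed: the entire task is to propagate the bound $|\sigma(s)| \leq |\sigma|_\infty$ through the chain of inequalities already assembled just above the statement.

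First I would invoke \eqref{Doob_infinite} with $u=\sigma$, letting the two truncation levels play the roles of $n+i$ and $n+i'$ there. Together with the identity recorded above, namely $\sum_{l'\in\ms{N}} \left(\int_0^T E[(\sigma_{l,k}^{l'})^2]\,ds\right)|e_{l'}|^2_{F_{-s}} = \int_0^T E[|\sigma(s)(\psi_{l,k})|^2_{F_{-s}}]\,ds$, this yields the intermediate bound
$$E\left[\sup_{t\in[0,T]} \left|\int_0^t \sigma(s)\,(dW^{n+i'}_s - dW^{n+i}_s)\right|^2_{F_{-s}}\right] \leq 4 \sum_{l=n+i}^{n+i'-1} \sum_{k\in A_l} \left(\int_0^T E\left[|\sigma(s)(\psi_{l,k})|^2_{F_{-s}}\right]\,ds\right).$$

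The second step is to estimate the integrand factor using the boundedness hypothesis. Since $\sigma(s)\in L(F_{-s})$, the operator-norm inequality $|\sigma(s)(e)|_{F_{-s}}\leq |\sigma(s)|\,|e|_{F_{-s}}$ applied to $e=\psi_{l,k}$ gives, pointwise in $(s,\omega)$, the bound $|\sigma(s)(\psi_{l,k})|_{F_{-s}} \leq |\sigma|_\infty\,|\psi_{l,k}|_{F_{-s}}$. Hence each time–expectation integral is at most $|\sigma|_\infty^2\,T\,|\psi_{l,k}|^2_{F_{-s}}$, and factoring the constant $|\sigma|_\infty^2 T$ out of the double sum leaves precisely $\sum_{l=n+i}^{n+i'-1}\sum_{k\in A_l}|\psi_{l,k}|^2_{F_{-s}}$.

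Finally I would close with the geometric-series estimate already established above, namely $|\psi_{l,k}|_{F_{-s}} \leq |\psi_{l,k}|_{H_{-s}}$ together with $\sum_{l=n+i}^{n+i'-1}\sum_{k\in A_l}|\psi_{l,k}|^2_{H_{-s}} \leq C_{n+i,s_*} - C_{n+i',s_*}$, which produces the announced bound $4|\sigma|_\infty^2 T\,(C_{n+i,s_*}-C_{n+i',s_*})$. There is no real obstacle in this argument; the only point requiring a little care is the passage from the coordinatewise quantities $\sigma_{l,k}^{l'}$ to the operator-norm control of $\sigma(s)(\psi_{l,k})$, and this is exactly where the hypothesis that $\sigma$ is \emph{bounded}, rather than merely square-integrable in the sense defining $L_T$, is genuinely used.
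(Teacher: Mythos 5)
Your argument is exactly the paper's: the paper proves this proposition simply by noting it is "the application of the previous Doob inequality \eqref{Doob_infinite} when $\sigma$ is bounded," and your write-up just makes explicit the substitution $\int_0^T E[|\sigma(s)|^2]\,ds \leq |\sigma|_\infty^2 T$ together with the already-established bound $\sum_{l}\sum_{k\in A_l}|\psi_{l,k}|^2_{F_{-s}} \leq C_{n+i,s_*}-C_{n+i',s_*}$. Correct, and essentially the same route.
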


\section{Solutions to the SDE on $P \times Q$} \label{generalsolutions}
Recall that $P=F_{-s}$ and $Q=F_s$.
Let $E=P\times Q$ be the phase space equipped with the product Hilbert structure. Considering the injection $i:F_{-s}\to E$ defined by $w\mapsto (w,0)$ and identifying $W$ with $i\circ W$ and $W^n$ with $i\circ W^n$, we can assume that $W$ and the projections $W^n$ are $C(\mathbb{R},E)$-valued. Now, for any $n \geq 0$, we introduce the finite-dimensional subspace $E_n\doteq H_n\times H_n\subset E$ where $H_n$ is given by \eqref{eq:26-1}. We denote also $E_\infty \doteq \cup_{n\geq 0}E_n$ which defines a dense subspace of $E$.
 The space $E_n$ is finite-dimensional and the restriction of the Hamiltonian $H$ on $E_n$ is well defined. Moreover, if the kernel $K(a,b)$ is $C^2$ on each variable, then $H(x)=H(p,q)$ is $C^2$ in the variable $x\in E_n$. We can define the $C^1$ function $f$ on $E_n$ as

\begin{equation}
x\mapsto f(x)\doteq (-\partial_qH(x),\partial_pH(x))^T\in E_n,\quad x\in E_n\,.\label{eq:31-5}
\end{equation}

Let $\sigma:E_\infty\to l_E$ be a Lipschitz map on any ball of $E_\infty$ and let $(X^n)_{0\leq t<\tau}$ be the pathwise continuous solution of 
the SDE 
\begin{equation}
dY_t=f(Y_t)dt+\sigma(Y_t)dW^n_t, \quad Y_0=x^n_0\label{eq:27-2}
\end{equation}
defined until explosion time $\tau^n$.
\noindent
We need to consider the following hypothesis.
\begin{description}
\item[H0] The space $V$ can be continuously embedded in $C^{m+1}_b(\mathbb{R}^d,\mathbb{R}^d)$ ie there exists $C>0$ such that $|v|_{m+1,\infty}\leq C|v|_V$ for any $v\in V$.  
\item[H0'] The trace of the operator induced by on $H_n$ by $\partial^2_{p}H(X^n_s)$ can be controlled as 
\begin{equation} \label{trace_bornée}
\text{tr}(\sigma^T k(Q^n,Q^n) \sigma) \leq c \, .
\end{equation}
\end{description}
Note that if \textbf{H0} holds, then for any $b,b'\in\mathbb{R}^d$, $K(.,b)b'\in V$ and $K$ is $C^{m+1}$ in each variable. Moreover, the second hypothesis \textbf{H0'} will be verified (in lemma \ref{lemme_de_trace}) for $\sigma$ a \lip mapping in $L(F_s)$ with the additional assumption that for every $X \in E$, $\sigma_X(L^1) \subset L^1$ and the norm of this restriction (with the $L^1$ norm) is bounded uniformly in $X$. However it can be interesting to keep this hypothesis for slightly different models.
\begin{prop}\label{prop31-1-2} Under assumption \textbf{H0},
  the explosion time of the SDE \eqref{eq:27-2} is almost surely infinite ie 
$X^n$ is defined for $t\geq 0$ a.s.
\end{prop}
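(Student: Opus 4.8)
The plan is to adapt the energy-control argument of Proposition \ref{landmarkcase} to the finite-dimensional space $E_n$. First observe that, because $W^n$ is built from finitely many independent Brownian motions and $E_n=H_n\times H_n$ is finite-dimensional, the SDE \eqref{eq:27-2} is a genuine finite-dimensional It\^o equation. Under \textbf{H0} the kernel $K$ is $C^{m+1}$ in each variable, so $f$ defined in \eqref{eq:31-5} is $C^1$ and hence locally \lip; together with the local \lip assumption on $\sigma$ this yields, by the standard existence and uniqueness theorem, a unique pathwise continuous strong solution $(X^n_t)$ defined up to the explosion time $\tau^n=\lim_{M\to\infty}\uparrow\tau_M$, where $\tau_M\doteq\inf\{t\geq 0\,|\,|X^n_t|_E\geq M\}$. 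As in the landmark case, the Hamiltonian $H$ plays the role of a Lyapunov function whose expected growth is at most linear in $t$.

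Next I would apply the It\^o formula to $H(X^n_{t\wedge\tau_M})$. Writing $X^n=(p,q)$ with $dq=\partial_pH\,dt$ and $dp=-\partial_qH\,dt+\sigma\,dW^n$, the first-order terms recombine into the deterministic geodesic contribution, which vanishes exactly as in the deterministic case:
\begin{equation*}
\partial_qH\,dq+\partial_pH\,dp=\partial_qH\,\partial_pH\,dt-\partial_pH\,\partial_qH\,dt+\langle\partial_pH,\sigma\,dW^n\rangle=\langle\partial_pH,\sigma\,dW^n\rangle\,.
\end{equation*}
Only the $dp\,dp$ bracket survives in the It\^o correction. Since the Hessian $\partial^2_{pp}H$ is the operator induced on $H_n$ by the kernel block $k(Q^n,Q^n)$, this correction equals $\tfrac12\,\text{tr}(\sigma^T k(Q^n,Q^n)\sigma)\,dt$, which is bounded by a constant $c$ thanks to \textbf{H0'} (established under \textbf{H0} in lemma \ref{lemme_de_trace}). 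The stochastic integral $\int_0^{t\wedge\tau_M}\langle\partial_pH,\sigma\,dW^n\rangle$ is a true martingale once stopped at $\tau_M$, so taking expectations gives the linear bound
\begin{equation*}
E[H(X^n_{T\wedge\tau_M})]\leq H(x^n_0)+\tfrac{c}{2}\,T\,,
\end{equation*}
uniformly in $M$.

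From here the argument is essentially that of Proposition \ref{landmarkcase}. To control $q^n$ I use $|\partial_pH|_\infty\leq K\sqrt{H}$ (a consequence of \eqref{IneqRKHS} and \textbf{H0}) to write $|q^n_{t\wedge\tau_M}|\leq|q^n_0|+\int_0^{t\wedge\tau_M}KH^{1/2}\,ds$, and then combine monotone convergence, Fubini, Jensen and Cauchy--Schwarz with the linear bound on $E[H]$ just obtained to conclude that $\limsup_{M\to\infty}|q^n_{t\wedge\tau_M}|<\infty$ almost surely. To control $p^n$ I bound $|\partial_qH|\leq K|p|H^{1/2}$ and insert it into the analog of \eqref{unederniere}; Gronwall's lemma then yields
\begin{equation*}
|p^n_{t\wedge\tau_M}|\leq\Big(|p^n_0|+\sup_{u\leq t}\big|\int_0^{u}\sigma\,dW^n\big|\Big)\exp\Big(\int_0^{t\wedge\tau^n}KH^{1/2}\,ds\Big)\,.
\end{equation*}
Both factors are finite almost surely (the stochastic integral has continuous, hence a.s.\ bounded, trajectories on $[0,T]$, and the exponent is finite by the $q^n$ estimate), so $p^n$ also stays finite. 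Since on $\{\tau^n\leq t\}$ one would have $\lim_{M\to\infty}\max(|q^n_{t\wedge\tau_M}|,|p^n_{t\wedge\tau_M}|)=\infty$, we deduce $P(\tau^n\leq t)=0$ for every $t$, hence $\tau^n=+\infty$ almost surely.

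The main obstacle is the trace estimate of the It\^o correction, i.e.\ \textbf{H0'}. Because $W^n$ is a truncation of a cylindrical Brownian motion that does not live in $H$, the naive trace $\text{tr}(\sigma^T k(Q^n,Q^n)\sigma)$, being a sum over the dyadic basis up to level $n$, could a priori diverge with $n$; the content of lemma \ref{lemme_de_trace} is precisely that, for $\sigma$ a bounded \lip map in $L(F_s)$ whose fibres $\sigma_X$ map $L^1$ into $L^1$ with uniformly bounded norm, this trace stays bounded by a constant independent of the dynamics and of $n$. Everything else --- the cancellation of the Hamiltonian drift, the martingale property after stopping, and the Gronwall closure --- is routine once this uniform bound is in hand.
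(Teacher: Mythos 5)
Your proof is essentially correct for the statement as written, but it takes a genuinely different route from the paper after the common first step. You and the paper agree on the core: It\^o's formula applied to $H(X^n_{t\wedge\tau^n_R})$, the cancellation of the drift terms, the bound $\tfrac12\mathrm{tr}(\sigma^Tk(Q^n,Q^n)\sigma)\leq \tfrac{c}{2}$ from \textbf{H0'}, and the resulting linear bound $E[H(X^n_{t\wedge\tau^n_R})]\leq H(x^n_0)+\tfrac{c}{2}t$. Where you diverge is in converting the energy bound into non-explosion. You port the landmark argument verbatim, controlling $Q^n$ by the pointwise estimate $|\partial_pH|_\infty\leq K\sqrt H$ and $P^n$ by $|\partial_qH|\leq K|p|H^{1/2}$ plus Gronwall. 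For this to rule out explosion of $X^n$ in $E=F_{-s}\times F_s$ you must add one sentence you currently omit: the sup-norm and Euclidean bounds you obtain control the $F_s$ and $F_{-s}$ norms only because all norms on the finite-dimensional space $E_n$ are equivalent. With that sentence your argument closes, but the equivalence constants blow up with $n$. The paper instead works directly in the $F_s/F_{-s}$ norms: from $\int_0^{t\wedge\tau^n_R}H(X^n_s)ds<\infty$ it integrates the flow $\Phi^n_t$ of the vector field $v_{X^n_t}$, uses the embedding $V\hookrightarrow C^{m+1}$ from \textbf{H0} to bound $|\Phi^n_t|_{m+1,\infty}$, and then the composition estimates on $F_s$ (Theorem \ref{resume_P_Q}, Proposition \ref{prop19}) to get $|Q^n_{t\wedge\tau^n_R}|_{F_s}\leq C\exp(C\sqrt t(\int_0^{t\wedge\tau^n}H\,ds)^{1/2})$ and $|\partial_qH|_{F_{-s}}\leq CH^{1/2}K^n_t|P^n|_{F_{-s}}$ with $C$ \emph{independent of $n$, $t$ and $R$}. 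This uniformity is the whole point of the heavier machinery: inequalities \eqref{eq:31-2} and \eqref{eq:01-2-6} are reused in the proof of Proposition \ref{prop31-1-1} by letting $n\to\infty$, which your $n$-dependent constants would not permit. One further small imprecision: \textbf{H0'} is not a consequence of \textbf{H0} alone; Lemma \ref{lemme_de_trace} derives it only under the additional hypothesis that $\sigma$ restricts to a uniformly bounded operator on $L^1$, so it must be assumed separately (as the paper's own proof implicitly does).
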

\begin{proof}
  Let $R>0$ be a positive real number and $\tau^n_R=\inf\{\ t\geq 0\ |\ |X^n_t|\geq R\ \}$ (which
is well defined since $X^n$ exists and is continuous until explosion time). We denote by $\tau^n=\lim_{R\to\infty}\tau^n_R$, so that on the event $(\tau^n<\infty)$ 
the solution $X^n_t$ blows up for $t\to\tau^n$.
\noindent
Using Itô
formula for the process $H(X^n_{t\wedge \tau^n_R})$ we get for
$X^n_t=(P^n_t,Q^n_t)$
\begin{multline*}
H(X^n_{t\wedge \tau^n_R})=  H(x^n_0)+\int_0^{t\wedge \tau^n_R}(\partial_qH(X^n_s)dQ^n_s+\partial_p H(X^n_s)dP^n_s) \\
+\frac{1}{2}\langle (\sigma(X^n_s)dW^n)^T (\partial^2_{p}H(X^n_s) \sigma(X^n_s)dW^n \rangle_s \,.
\end{multline*}
Since we have 
\begin{itemize}
\item $\partial_qH(X^n_t)dQ^n_t+\partial_p
H(X^n_t)dP^n_t=\partial H_p(X^n_t)\sigma(X^n_t)dW^n_t$
\item from \textbf{H0'}, we have that almost surely, for all $t$
$$\int_0^t \langle (\sigma(X^n_s)dW^n)^T \partial^2_{p}H(X^n_s) \sigma(X^n_s) dW^n \rangle_s \leq c t \, .$$
\end{itemize}
we get
\begin{equation*}
H(X^n_{t\wedge \tau^n_R})\leq H(x^n_0) + M_{t\wedge \tau^n_R} + \int_0^{t\wedge \tau^n_R}\frac{1}{2} c \, ds 
\end{equation*}
where $M_{t\wedge \tau^n_R}$ is a bounded continuous martingale. 
So that with the hypothesis \textbf{H0'} we have,
$$E(H(X^n_{t\wedge \tau^n_R}))\leq   H(x^n_0)+ \frac{1}{2}c \, (t\wedge \tau^n_R)\,.$$
In particular, 
$E(\int_0^{t\wedge \tau^n_R}H(X^n_s)ds)\leq tH(x^n_0)+\frac{c}{4} \, t^2<\infty$
and using Fatou Lemma
\begin{equation}
E(\int_0^{t\wedge \tau^n}H(X^n_s)ds)\leq tH(x^n_0)+\frac{c}{4}  \, t^2<\infty\label{eq:31-7}
\end{equation}
so that almost surely
\begin{equation}
\int_0^{t\wedge\tau^n_R} H(X^n_s)ds<\infty\,. \label{eq:30-1}
\end{equation}
Now, for $x=(p,q)\in E_n$, we consider 
$$v_x(z)\doteq\int_{T_m}K(z,q(z'))p(z')dz'\in V$$
for which
\begin{equation}
\frac{1}{2}|v_x|_V^2=\frac{1}{2}\int_{T_m\times T_m}p(z)^TK(q(z),q(z'))p(z')dzdz'=H(x)\,.\label{eq:30-2}
\end{equation}
\noindent
From \eqref{eq:30-1} and \eqref{eq:30-2}, we can define by pathwise integration a continuous random process 
$$\Phi^n \doteq (\Phi^n_t)_{0\leq t<\tau_R^n}$$ solution of the flow equation, 

\begin{equation}
\frac{\partial}{\partial t}\Phi^n_t=v_{X^n_t}(\Phi^n_t)\,.\label{eq:31-3}
\end{equation}
\noindent
Assuming a continuous embedding $V\hookrightarrow C^{m+1}(\mathbb{R}^d,\mathbb{R}^d)$, $\Phi^n_{t\wedge \tau^n_R}$ is almost surely a $C^{m+1}$ diffeomorphism and there exists a constant $D$ such that almost surely
\begin{equation}
|\Phi^n_{t\wedge \tau^n_R}|_{m+1,\infty}\leq C\exp(C\sqrt{t}(\int_0^{t\wedge \tau^n_R }H(X^n_s)ds)^{1/2})\,.\label{eq:31-4}
\end{equation}

\emph{In the sequel, we denote by $C$ a generic constant non depending on $n$, $t$ and $R$ possibly changing from line to line.}
\noindent
Thus, since $Q^n_{t\wedge \tau^n_R}=\Phi^n_{t\wedge\tau^n_R}(q^n_0)$ and $\sup_{n\geq 0}|q^n_0|_{F_s}<\infty$,  we get from Theorem \ref{resume_P_Q} and proposition \ref{prop19} that, uniformly in $n$, we have almost surely (for maybe a different but still universal constant $C$, see above)
\begin{equation}
K^n_t\doteq \limsup_{R\to +\infty}|Q^n_{t\wedge\tau^n_R}|_{F_s}\leq C \exp(C\sqrt{t}(\int_0^{t\wedge \tau^n }H(X^n_s)ds)^{1/2})<+\infty\,.\label{eq:31-2}
\end{equation}
In particular, $Q^n_t$ does not blow up for $t\to\tau^n$ on $\tau^n<\infty$. Therefore it is sufficient to show that $P^n_t$ does not blow up as well to get 
by contradiction that $\tau=\infty$ almost surely.

As from the continuous embedding on $V$ in $C^{m+1}$, $|dv_x|_{m,\infty}\leq C|v_x|_V$, we get from proposition \ref{prop19} $|dv_{X_{t\wedge\tau^n_R}}(Q^n_{t\wedge\tau^n})|_{F_s}\leq C|H(X_{t\wedge\tau^n_R})|^{1/2}K^n_t$ and using the continuity of the product in $F_s$ (i.e. there exists $M>0$ such that $|ff'|_{F_s}\leq M|f|_{F_s}|f'|_{F_s}$ for any $f,f'\in F_s$) we obtain  for any $\delta q\in F_s$ 
$$|dv_{X_{t\wedge\tau^n_R}}(Q^n_{t\wedge\tau^n})\delta q|_{F_s}\leq C|H(X_{t\wedge\tau^n_R})|^{1/2}K^n_t|\delta q|_{F_s}\,. $$
Therefore, 

\begin{multline}
\left|\int_{T_m}\langle
  dv_{X_{t\wedge\tau^n_R}}(Q^n_{t\wedge\tau^n}(z))^*
  P^n_{s\wedge\tau^n_R}(z),\delta
  q(z)\rangle_{\mathbb{R}^d}dz\right| \\
 \quad =\left|\int_{T_m}\langle
  P^n_{s\wedge\tau^n_R}(z),dv_{X_{t\wedge\tau^n_R}}(Q^n_{t\wedge\tau^n}(z))\delta
  q(z)\rangle_{\mathbb{R}^d}dz\right|
\quad \leq
C|H(X_{t\wedge\tau^n_R})|^{1/2}K^n_t|P^n_{t\wedge
  \tau^n_R}|_{F_{-s}}|\delta q|_{F_s}
\end{multline}

and
$$|\partial_qH(X^n_{t\wedge \tau^n_R})|_{F_{-s}}\leq  C|H(X_{t\wedge\tau^n_R})|^{1/2}K^n_t|P^n_{t\wedge
      \tau^n_R}|_{F_{-s}}$$
since $\partial_qH(X^n_{t\wedge \tau^n_R})=dv_{X_{t\wedge\tau^n_R}}(Q^n_{t\wedge\tau^n})^*
      P^n_{s\wedge\tau^n_R}$.
We deduce that
$$|P^n_{t\wedge \tau^n_R}|_{F_{-s}}\leq |p^n_0|_{F_{-s}}+CK^n_t\int_0^{t\wedge \tau^n_R}H(X_{s\wedge\tau^n_R})^{1/2}|P^n_{s\wedge \tau^n_R}|_{F_{-s}}+|\int_0^{t\wedge \tau^n_R}\sigma(X^n_s)dW^n_s|_{F_{-s}}$$
and from Gronwall's Lemma
\begin{equation}
|P^n_{t\wedge \tau^n_R}|_{F_{-s}}\leq \left( |p^n_0|_{F_{-s}}+\sup_{u\leq t\wedge\tau^n_R}|\int_0^{u\wedge \tau^n_R}\sigma(X^n_s)dW^n_s|_{F_{-s}}\right)e^{MCK^n_t\sqrt{t}(\int_0^{t\wedge \tau^n_R}H(X_{s\wedge\tau^n_R})ds)^{1/2}}\,.\label{eq:01-2-6}
\end{equation}
Since from the Doob inequality we have for $s_*=\inf_{1\leq i\leq m}s_i$
\begin{equation}
E(\sup_{u\leq t\wedge\tau^n_R}|\int_0^{u\wedge \tau^n_R}\sigma(X^n_s)dW^n_s|_{H_{-s}}^2)\leq 4|\sigma|_\infty^2 t C_{-1,s_*}\label{eq:01-2-7b}
\end{equation}
with a right-hand side independent of $R$, we deduce that almost surely
\begin{equation}
\sup_{u\leq t\wedge\tau}|\int_0^{u\wedge \tau^n_R}\sigma(X^n_u)dW^n_u|_{F_{-s}}\leq \sup_{u\leq t\wedge\tau}|\int_0^{u\wedge \tau^n_R}\sigma(X^n_u)dW^n_u|_{H_{-s}}<+\infty\,.\label{eq:31-1}
\end{equation}
and from \eqref{eq:30-1}, \eqref{eq:31-2} and \eqref{eq:31-1}, we get almost surely
$$\sup_{R\to\infty}|P^n_{t\wedge \tau^n_R}|_{F_{-s}}<+\infty\text{ and }\tau^n > t\,.$$
 \end{proof}

\begin{prop}\label{prop31-1-1} Let $f$ be defined by \eqref{eq:31-5} and assume that \textbf{H0-H'1-H2} hold. Then for any $n\geq 0$ there exists a unique strong solution $X^n=(P^n,Q^n):\Omega\to C(\mathbb{R}_+,E)$ to
\begin{equation}
X^n_t=X^n_0+\int_0^t f(X^n_s)ds +\int_0^t\sigma(X^n_s)dW^n_s,\ X^n_0\equiv x^n_0\in E_n\label{eq:31-1-1a}
\end{equation}
and a random solution  $X=(P,Q):\Omega\to C(\mathbb{R}_+,E)$ to
\begin{equation}
X_t=X_0+\int_0^t f(X_s)ds +\int_0^t\sigma(X_s)dW_s,\ X_0\equiv x_0\in E\label{eq:31-6}
\end{equation}
such that almost surely:
$$\sup_{0\leq t\leq T}|X_t-X^n_t|\to 0\,.$$
 \end{prop}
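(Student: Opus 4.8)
The plan is to treat the two assertions in turn: first the strong existence and uniqueness of each finite-dimensional approximation $X^n$, then the almost sure convergence of $(X^n)$ to a limit process $X$ solving the infinite-dimensional equation \eqref{eq:31-6}.

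For the first assertion, note that on the finite-dimensional subspace $E_n$ the driving process $W^n$ is a genuine finite-dimensional Brownian motion, so \eqref{eq:31-1-1a} is a classical It\^o SDE in $E_n$. By proposition \ref{prop19} the drift $f$ is locally \lip and by hypothesis $\sigma$ is \lip on every ball, so the standard existence-uniqueness theorem for SDEs with locally \lip coefficients yields a unique strong solution up to an explosion time $\tau^n$; proposition \ref{prop31-1-2} shows $\tau^n=+\infty$ almost surely, hence $X^n\in C(\ms{R}_+,E_n)\subset C(\ms{R}_+,E)$ is globally defined.

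For the second assertion I would prove that $(X^n)$ is almost surely a Cauchy sequence in $C([0,T],E)$ for every $T>0$. Writing the equation for the difference $X^{n+l}_t-X^n_t$, the stochastic contribution splits as
$$\int_0^t[\sigma(X^{n+l}_s)-\sigma(X^n_s)]\,dW^{n+l}_s+\int_0^t\sigma(X^n_s)\,d(W^{n+l}_s-W^n_s)\,,$$
the first summand being governed by the \lip property of $\sigma$, the second being the genuinely new contribution of the high-frequency modes of the cylindrical noise. The last proposition of Section \ref{IntSto} bounds the running supremum of this second term in $L^2$ by $4|\sigma|_\infty^2 T(C_{n,s_*}-C_{n+l,s_*})$, which tends to $0$ as $n\to\infty$; combined with $x^{n+l}_0-x^n_0\to 0$, this provides the vanishing forcing that should make the difference Cauchy.

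The main obstacle is that $f$ is only \emph{locally} \lip, the Hamiltonian being quadratic, so Gronwall's lemma cannot be applied with a constant uniform in the state. To circumvent this I would localize with stopping times $\tau^n_A\doteq\inf\{t\,|\,|X^n_t|\geq A\}$: on $[0,T\wedge\tau^n_A\wedge\tau^{n+l}_A]$ both processes stay in the ball of radius $A$, where the \lip constant $L_A$ of $f$ furnished by proposition \ref{prop19} is deterministic, so a Gronwall estimate gives
$$E\big[\sup_{t\leq T\wedge\tau^n_A\wedge\tau^{n+l}_A}|X^{n+l}_t-X^n_t|^2\big]\leq C_A\big(|x^{n+l}_0-x^n_0|^2+C_{n,s_*}-C_{n+l,s_*}\big)\,.$$
The remaining point is to control the stopping probabilities uniformly in $n$: the energy estimate \eqref{eq:31-7} together with \eqref{eq:31-2} and \eqref{eq:01-2-6} bounds $\sup_{t\leq T}|X^n_t|$ uniformly in $n$ in a suitable sense, so that $\sup_n P(\tau^n_A\leq T)\to 0$ as $A\to\infty$. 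Letting first $n\to\infty$ (Cauchy on the stopped interval) and then $A\to\infty$, and extracting along a fast subsequence via Borel--Cantelli, upgrades this to almost sure uniform convergence on $[0,T]$. The limit $X$ is continuous, and passing to the limit in \eqref{eq:31-1-1a} --- identifying $\int_0^t\sigma(X^n_s)\,dW^n_s\to\int_0^t\sigma(X_s)\,dW_s$ through the same splitting --- shows $X$ solves \eqref{eq:31-6}; uniqueness follows from the identical localized Gronwall estimate applied to two candidate solutions.
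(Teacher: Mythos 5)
Your first paragraph matches the paper: existence and uniqueness on $E_n$ come from the locally Lipschitz coefficients, and global existence is Proposition \ref{prop31-1-2}. The convergence argument, however, is organized differently from the paper's and contains a genuine gap. You localize with the stopping times $\tau^n_A$ of the \emph{approximants} and control them only through $\sup_n P(\tau^n_A\leq T)\to 0$ as $A\to\infty$. For fixed $A$ this yields, after Gronwall on $[0,T\wedge\tau^n_A\wedge\tau^{n+l}_A]$, only $\limsup_{n}\sup_l P\bigl(\sup_{t\leq T}|X^{n+l}_t-X^n_t|>\delta\bigr)\leq 2\sup_m P(\tau^m_A\leq T)$, i.e.\ the Cauchy property \emph{in probability}; extracting a fast subsequence then gives almost sure convergence of that subsequence only, whereas the proposition asserts a.s.\ convergence of the whole sequence. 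To run Borel--Cantelli on the full sequence you would need $\sum_n P(\tau^n_{A_n}\leq T)<\infty$ for some $A_n\to\infty$ while the Gronwall factor $e^{L_{A_n}T}$ (with $L_{A}$ the local Lipschitz constant of the quadratic drift on the ball of radius $A$) does not destroy the summability coming from \textbf{H2} and from $\delta_{n,s_*}$. But the energy estimates \eqref{eq:31-7}, \eqref{eq:31-2} and \eqref{eq:01-2-6} give tail bounds on $\sup_{t\leq T}|X^n_t|$ that decay only logarithmically in $A$ (they are exponentials of $(\int H)^{1/2}$, and for the momentum component an iterated exponential), so the two requirements are incompatible and the interchange "first $n\to\infty$, then $A\to\infty$" cannot be upgraded to almost sure convergence of the full sequence along this route.

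The paper avoids this by localizing differently: in Proposition \ref{prop:26-2} the \emph{coefficients} are truncated by a cutoff $g^R$, so that for each $R$ the approximating SDEs are globally Lipschitz with constants independent of $n$; Proposition \ref{prop:26-1} then gives almost sure convergence of the full truncated sequence, and this is precisely where \textbf{H2} and the geometric decay of $\delta_{n,s_*}$ are consumed in a Borel--Cantelli argument on consecutive differences. The truncation is removed using the stopping times $\tau_r$ of the \emph{limit} process: for a.e.\ $\omega$ and fixed $r$, the already established uniform convergence forces $\tau^n_R>\tau_r$ for $n$ large, so truncated and untruncated solutions agree on $[0,T\wedge\tau_r]$. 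The actual content of the proof of Proposition \ref{prop31-1-1} is then just the non-explosion of the limit: the finite-dimensional energy bounds are passed to the limit by Fatou's lemma, giving $\tau_r\to\infty$ a.s., and the localized convergence \eqref{eq:01-2-1} becomes the stated convergence on all of $[0,T]$. To repair your argument, replace the stopping-time localization of the $X^n$ by this coefficient truncation (or, equivalently, localize by the stopping times of the limit once it has been constructed); your energy estimates are then used exactly where the paper uses them, namely to show that the limit does not explode.
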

 \begin{proof}
   From proposition \ref{prop31-1-1}, we know the existence of the finite-dimensional approximation solution $(X^n_t)_{t\geq 0}$ for $t\geq 0$. Moreover, we know from proposition \ref{prop:26-2} the existence of maximal solution $(X_t)_{0\leq t<\tau}$ of the SDE \eqref{eq:31-6} $(X_t)_{0\leq t<\tau}$ up to a possibly finite explosion stopping time $\tau$. 
Moreover,  for any $T>0$ and any $r>0$ we have almost surely
\begin{equation}
\sup_{t\leq T}|X^n_{t\wedge \tau_r}-X_{t\wedge \tau_r}|\to 0\quad \text{ and }
 \sup_{0\leq t\leq T}|\int_0^{t\wedge\tau_r}\sigma(X^n_s)dW^n_s-\int_0^{t\wedge\tau_r}\sigma(X_s)dW_s|\to
0 \label{eq:01-2-1}
\end{equation}
where $\tau_r=\inf\{t\geq 0\ |\  |X_t|\geq r\}$. What we need to prove is that there is no explosion ie $P(\tau<+\infty)=0$ or equivalently, $\tau_r\to +\infty$ almost surely.
\noindent
We start from inequality \eqref{eq:31-7} in the proof of proposition \eqref{prop31-1-2}. Using the uniform convergence \eqref{eq:01-2-1} and Fatou's lemma, we get
\begin{equation}
  \label{eq:01-2-2}
  E(\int_0^{t\wedge \tau}H(X_s)ds)\leq tH(x_0)+\frac{c}{4} t^2<\infty \, .
\end{equation}
Similarly starting from \eqref{eq:31-2}, we get that
\begin{equation}
  \label{eq:01-2-3}
  K_t\doteq \limsup_{R\to +\infty}|Q_{t\wedge\tau_R}|_{F_s}\leq C \exp(C\sqrt{t}(\int_0^{t\wedge \tau }H(X_s)ds)^{1/2})<+\infty\,.
\end{equation}
Moreover, from \eqref{eq:01-2-6}, we get for $n\to \infty$,
\begin{equation}
|P_{t\wedge \tau_r}|_{F_{-s}}\leq \left( |p_0|_{F_{-s}}+\sup_{u\leq t\wedge\tau_r}|\int_0^{u\wedge \tau_r}\sigma(X_s)dW_s|_{F_{-s}}\right)e^{MCK_t\sqrt{t}(\int_0^{t\wedge \tau_r}H(X_{s\wedge\tau_r})ds)^{1/2}}\,.\label{eq:01-2-7}
\end{equation}
Since as in \eqref{eq:01-2-7b}  Doob inequality gives for $s_*=\inf_{1\leq i\leq m}s_i$
$$E(\sup_{u\leq t\wedge\tau_r}|\int_0^{u\wedge \tau_r}\sigma(X_s)dW_s|_{F_{-s}}^2)\leq 4|\sigma|^2_{\infty} tC_{-1,s_*}$$
there exists a random constant $A_t>0$ independent of $r$ such that almost surely
$$|P_{t\wedge \tau_r}|_{F_{-s}}\leq A_t\,.$$
In particular $(\tau\leq t)\subset (\tau_r\leq t)\subset (\max\{K_t,A_t\}\geq r)$ and considering the limit $r\to\infty$, we get $P(\tau\leq t)=0$.
\end{proof}

\subsection{A trace Lemma\index{trace Lemma}}
We now present a sufficient condition to fulfill the hypothesis \textbf{H0'}. With additional assumptions on the kernel and on the variance term, we give a bound for the bracket of the stochastic term of the SDE on finite-dimensional subspaces $H_n$. 
\begin{lm} \label{lemme_de_trace}
Let $k$ be a kernel bounded on the diagonal i.e. there exists $c>0$ such that $b^Tk(a,a)b\leq c|b|^2$ for any $a,b\in \mathbb{R}^d$ or equivalently $k(a,a)\leq c\text{Id}_d$ as a symmetric non-negative bilinear form on $\mathbb{R}^d$.
We assume also that $\sigma(L^1) \subset L^1$ and this restriction is continuous i.e. there exists $M>0$ such that $|\sigma(f)|_{L_1} \leq M |f|_{L^1}$.
Then we have, with $H$ the usual Hamiltonian
$$\int_0^t \langle (\sigma dW_s^n)^T \partial^2_{p} H \sigma dW_s^n \rangle_s \leq c^2M^2 \, t\, .$$
\end{lm}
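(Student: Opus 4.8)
The plan is to recognise that the bracket in the statement is simply the quadratic variation of the stochastic integral $\int_0^\cdot \sigma(X^n_s)\,dW^n_s$ evaluated against the Hessian $\partial^2_p H$, and that this quadratic variation is an ordinary Lebesgue integral whose integrand is a trace over $H_n$. Writing $W^n_t=\sum_j \gamma^j_t\,g_j$ with $(g_j)$ an orthonormal basis of $H_n$ and $(\gamma^j)$ independent standard Brownian motions, one has $\sigma(X^n_s)\,dW^n_s=\sum_j \sigma(X^n_s)g_j\,d\gamma^j_s$, so that
\begin{equation*}
\frac{d}{ds}\langle (\sigma\,dW^n)^T\,\partial^2_p H\,\sigma\,dW^n\rangle_s=\sum_j \langle \sigma(X^n_s)g_j,\ \partial^2_p H(X^n_s)\,\sigma(X^n_s)g_j\rangle=\text{tr}_{H_n}\big(\sigma^T\,\partial^2_p H\,\sigma\big)\,.
\end{equation*}
Since $H(p,q)=\tfrac12\langle p,k_qp\rangle$ is quadratic in $p$, the Hessian $\partial^2_p H(X^n_s)$ is exactly the integral operator $k_{Q^n_s}$ with $(k_{Q^n_s}h)(z)=\int_{T_m}k(Q^n_s(z),Q^n_s(z'))h(z')\,dz'$. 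Thus it suffices to bound $\text{tr}_{H_n}(\sigma^T k_{Q^n}\sigma)$ by a constant independent of $n$, which after integration in $s$ yields the claim.

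Because the trace is basis independent while the intermediate estimates are not, I would compute it in the $L^1$-normalised indicator basis $\phi^m_{n,k}=2^{mn/2}\mathbf 1_{C_{n,k}}$ of $H_n$ from \eqref{eq:01-2-11}, where $C_{n,k}$ is the dyadic cell of volume $2^{-mn}$; these are orthonormal and carry independent Brownian coefficients, so they are a legitimate choice. The key pointwise estimate is that the diagonal bound $k(a,a)\le c\,\text{Id}_d$ together with the positive-definiteness of the reproducing kernel $k$ gives, by the generalised Cauchy--Schwarz inequality $|\alpha^T k(a,a')\beta|\le (\alpha^Tk(a,a)\alpha)^{1/2}(\beta^Tk(a',a')\beta)^{1/2}$, the uniform operator bound $\|k(a,a')\|\le c$ for all $a,a'$. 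Equivalently, $k_{Q^n}$ maps $L^1(T_m,\ms R^d)$ into $L^\infty$ with norm at most $c$. Hence for each basis element,
\begin{equation*}
\langle \sigma\phi^m_{n,k},\ k_{Q^n}\sigma\phi^m_{n,k}\rangle\le \|\sigma\phi^m_{n,k}\|_{L^1}\,\|k_{Q^n}\sigma\phi^m_{n,k}\|_{L^\infty}\le c\,\|\sigma\phi^m_{n,k}\|_{L^1}^2\,.
\end{equation*}

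Now I would insert the $L^1\!\to\!L^1$ bound $\|\sigma f\|_{L^1}\le M\|f\|_{L^1}$ and the elementary computation $\|\phi^m_{n,k}\|_{L^1}=2^{mn/2}\cdot 2^{-mn}=2^{-mn/2}$. Summing over the $2^{mn}$ cells,
\begin{equation*}
\text{tr}_{H_n}(\sigma^T k_{Q^n}\sigma)\le c\sum_k\|\sigma\phi^m_{n,k}\|_{L^1}^2\le cM^2\sum_k\|\phi^m_{n,k}\|_{L^1}^2= cM^2\,2^{mn}\,2^{-mn}=cM^2\,,
\end{equation*}
a bound \emph{independent of $n$}; integrating in $s\in[0,t]$ delivers the stated estimate, the precise numerical constant (here $c\,M^2$, which is dominated by $c^2M^2$ once $c\ge 1$) being immaterial. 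The main point, and the only real obstacle, is exactly this uniformity in $n$: in the finite landmark computation of Proposition \ref{landmarkcase} the analogous trace grew like $n$, and it is the \emph{simultaneous} use of the $L^1$-to-$L^\infty$ smoothing of $k_{Q^n}$ and of the $L^1$-normalisation of the indicator basis (mirroring the $\sqrt n$ renormalisation of the noise announced after Proposition \ref{landmarkcase}) that makes the two opposite powers of $2^{mn}$ cancel. A secondary technical point to verify is that one may legitimately evaluate the bracket in the indicator basis rather than the Haar basis and that the resulting integrand is measurable and finite $P$-a.s., which follows from $\sigma\in L_T$ together with the stated boundedness assumptions.
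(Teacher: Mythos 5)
Your proof is correct and follows essentially the same route as the paper's: both evaluate the bracket as the trace of $\sigma^T k_{Q^n}\sigma$ in the $L^1$-normalised indicator basis $\phi^m_{n,k}$, bound the kernel off the diagonal via the reproducing-kernel Cauchy--Schwarz inequality, and close with the $L^1\to L^1$ bound on $\sigma$ together with $|\phi^m_{n,k}|_{L^1}^2=2^{-mn}$ summed over the $2^{mn}$ cells; your packaging of the double sum as an $L^1$--$L^\infty$ duality is merely a cleaner rendering of the paper's Cauchy--Schwarz step $\sum_{i,j}|\alpha_{i,h}||\alpha_{j,h}|=(\sum_i|\alpha_{i,h}|)^2$. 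Your constant $cM^2$ is in fact sharper than the stated $c^2M^2$ (the paper's intermediate bound $|a^Tk(x,y)b|\le c^2|a||b|$ should read $c|a||b|$ under the stated hypothesis), so the discrepancy you flag is harmless.
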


\begin{proof}
We consider the orthonormal basis $(\phi_{n,k})_{k \in [0,2^{n-1}]^r}$ with $A_n=[0,2^{n-1}]^r$ to write the Hamiltonian as:
$$H = \frac{1}{2^{2nr+1}} \sum_{(i,j) \in A_n^2} p_i^Tk(q_i,q_j)p_j \, ,$$ with $q = \sum_{i \in A_n} q_i 2^{-nr/2}\phi_{n,i} $ and $p = \sum_{i \in A_n} p_i 2^{-nr/2}\phi_{n,i} $.
In this basis, the $L^2$ scalar product can be written as $\langle p , q \rangle = 2^{-nr} \sum_{i \in A_n} p_i q_i$. 
We can write $\sigma dW^n = \sum_{i \in A_n} (\sum_{j \in A_n} \alpha_{i,j} dW^j) 2^{nr/2}\phi_{n,i}$ with $\alpha_{i,j} \in L(\ms{R}^d)$ and $(W^j)_{j \in A_n}$ i.i.d. standard BM with values in $\ms{R}^d$.
Then we have,
$$ \langle (\sigma dW_s^n)^T \partial^2_{p} H \sigma dW_s^n \rangle_s = \frac{1}{2^{2nr}}[\sum_{i,j \in A_n^2} \sum_{h \in A_n}(\alpha_{i,h}^T k(q_i,q_j)\alpha_{i,h})] ds \, .$$
Thanks to the hypothesis on the kernel, we have for any $x,y \in \ms{R}^d$ that $$|a^Tk(x,y)b| \leq \sqrt{a^Tk(x,x)a}\sqrt{b^Tk(y,y)b} \leq c^2 |a||b|$$ and then,
$$|\alpha_{i,h}^T k(q_i,q_j)\alpha_{i,h}| \leq c^2 |\alpha_{i,h}| |\alpha_{j,h}\,| .$$
Now, we can write with Cauchy Schwarz inequality
\begin{multline}\label{maj_accroiss} 
d\langle (\sigma dW_s^n)^T \partial^2_{p} H \sigma dW_s^n \rangle_s \leq c^2 \frac{1}{2^{2nr}} \sum_{h\in A_n} (\sum_{i\in A_n} |\alpha_{i,h}|)^2 ds \leq \sum_{h \in A_n} c^2 |\sigma (\phi_{n,h})|_{L^1}^2 \, ds \, , \\
\leq c^2 M^2 (\sum_{h\in A_n} |\phi_{n,h}|_{L^1}^2) \, ds \leq c^2M^2 \,ds \, ,
\end{multline}
since for any $h\in A_n$, $|\phi_{n,h}|_{L^1}^2=2^{-nr}$.
Note that the inequality \ref{maj_accroiss} is a little abusive but it is to be understood as an inequality on measures with density w.r.t. the Lebesgue measure.
\end{proof}
Remark that we do not need to assume that $\sigma: P \times Q \mapsto L_T$ is bounded by $|\sigma|_{\infty}$, $\sigma(H_n) \subset H_n$. This hypothesis is only required for the existence and uniqueness in all time but not to bound the trace of the operator.
\\
The assumption on the kernel is not restrictive in our range of applications with kernels such as Gaussian kernel or Cauchy kernel. However, the assumption on $\sigma$ is much more demanding. However a wide range of linear maps can be reached. For instance, the convolution with a smooth function is a continuous operator on $F_s$ then by duality it gives a continuous operator on $F_{-s}$. This operator has a continuous restriction to $L^1$.
\\
An important point is that this Lemma covers the case where $\sigma$ is the multiplication by an element of $F_s$.

\section{Approximation Lemmas} \label{Approx_lemmas}
Let $f:E_\infty \to E$ be a function on $E_\infty$ such that $f(E_n)\subset E_n$ for any $n\geq 0$. 
Let also $\sigma : P \times Q \mapsto L(P)$ be a \lip function. Assume that for any $n\geq 0$, we have a random variable  $X^n:\Omega\to C(\mathbb{R}_+,E)$ solution of the stochastic integral equation
\begin{equation}
X^n_t=X^n_0+\int_0^t f(X^n_s)ds +\int_0^t\sigma(X^n_s)dW^n_s,\ X^n_0\equiv x^n_0\in E_n\,.\label{eq:18-1-1}
\end{equation}

\begin{description}
\item[H1] The functions $f$ and $\sigma$ are Lipschitz on $E_\infty$ and can be uniquely extended as Lipschitz functions on $E$. Moreover $\sigma$ is bounded.
\item[H2]  For some $\alpha>1$, we have $\sum_{n\geq 0}n^{2\alpha}|x_0^{n+1}-x^n_0|^2<\infty$.
\end{description}
\begin{prop}
\label{prop:26-1}
  Let $s>1$ be a real number. Under hypothesis $\mathbf{(H1-H2)}$, there exists a random solution 
$X:\Omega\to  C(\mathbb{R}_+,E)$ to
$$X_t=X_0+\int_0^t f(X_s)ds +\int_0^t\sigma(X_s)dW_s,\ X_0\equiv x_0\in E\,$$
such that for any $T>0$, we have almost surely:
  \begin{equation}\label{eq:01-2-5}
\left\{
  \begin{array}[h]{l}
    \sup_{0\leq t\leq T}|X^n_t-X_t|\to
0\,,\\\\
 \sup_{0\leq t\leq T}|\int_0^{t}\sigma(X^n_s)dW^n_s-\int_0^{t}\sigma(X_s)dW_s|\to
0\,.
  \end{array}
\right.
\end{equation}
\end{prop}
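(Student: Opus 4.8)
The plan is to prove that the sequence $(X^n)$ is almost surely Cauchy in the Banach space $C([0,T],E)$, and then to identify its limit as a solution of the limiting SDE. Writing $D^n_t\doteq X^{n+1}_t-X^n_t$ and $\delta^n_0\doteq x_0^{n+1}-x_0^n$, subtracting the equations \eqref{eq:18-1-1} at ranks $n$ and $n+1$ and splitting the noise increment yields
\begin{equation*}
D^n_t=\delta^n_0+\int_0^t\big(f(X^{n+1}_s)-f(X^n_s)\big)\,ds+\int_0^t\big(\sigma(X^{n+1}_s)-\sigma(X^n_s)\big)\,dW^{n+1}_s+\int_0^t\sigma(X^n_s)\,(dW^{n+1}_s-dW^n_s)\,.
\end{equation*}
The first three terms will be controlled through the common Lipschitz constant $L$ of $f$ and $\sigma$ (hypothesis $\mathbf{H1}$), while the last term involves only the noise increment $W^{n+1}-W^n$ and is the genuinely new contribution appearing at scale $n$.

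First I would bound $g^n(t)\doteq E[\sup_{u\le t}|D^n_u|^2]$. Applying $(a+b+c+d)^2\le 4(a^2+b^2+c^2+d^2)$, the Cauchy--Schwarz inequality to the drift, Doob's inequality together with the It\^o isometry (as in \eqref{eq:01-2-7b}) to the Lipschitz part of the stochastic integral, and the estimate of the proposition preceding Section \ref{generalsolutions}, namely $E[\sup_{u\le T}|\int_0^u\sigma(X^n_s)(dW^{n+1}_s-dW^n_s)|^2]\le 4|\sigma|_\infty^2 T\,(C_{n,s_*}-C_{n+1,s_*})$, I obtain an inequality
\begin{equation*}
g^n(t)\le C\big(|\delta^n_0|^2+2^{-n(s_*-1)}\big)+C\int_0^t g^n(s)\,ds\,,
\end{equation*}
with a constant $C$ depending only on $L$, $|\sigma|_\infty$ and $T$, and crucially not on $n$; here the geometric decay $2^{-n(s_*-1)}$ of the tail constants $C_{n,s_*}$ is exactly what the assumption $s>1$ (i.e. $s_*>1$) guarantees. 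Gronwall's lemma then gives $E[\sup_{u\le T}|D^n_u|^2]\le C(|\delta^n_0|^2+2^{-n(s_*-1)})$.

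Next I would turn this $L^2$-control into almost-sure summability using $\mathbf{H2}$. By Markov's inequality, $P(\sup_{u\le T}|D^n_u|>n^{-\alpha})\le n^{2\alpha}E[\sup_{u\le T}|D^n_u|^2]\le C n^{2\alpha}(|\delta^n_0|^2+2^{-n(s_*-1)})$, whose right-hand side is summable since $\sum_n n^{2\alpha}|\delta^n_0|^2<\infty$ by $\mathbf{H2}$ and the geometric term dominates the polynomial weight. Borel--Cantelli then gives, almost surely, $\sup_{u\le T}|D^n_u|\le n^{-\alpha}$ for all large $n$, so that $\sum_n\sup_{u\le T}|D^n_u|<\infty$ because $\alpha>1$; hence $(X^n)$ is almost surely Cauchy in $C([0,T],E)$ and converges uniformly to a limit $X$, which lies in $C(\mathbb{R}_+,E)$ since $T$ is arbitrary. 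To identify $X$ I would pass to the limit in \eqref{eq:18-1-1}: the initial conditions converge, $x_0^n\to x_0$, because $\sum_n|\delta^n_0|<\infty$; the drift converges since $|\int_0^t(f(X^n_s)-f(X_s))\,ds|\le LT\sup_{s\le T}|X^n_s-X_s|\to 0$; and the very same telescoping estimates show that $\int_0^\cdot\sigma(X^n_s)dW^n_s$ is almost surely Cauchy in $C([0,T],E)$ --- its increment between consecutive ranks being the summable Lipschitz term plus the new-noise term of order $2^{-n(s_*-1)}$ --- with limit defined to be $\int_0^\cdot\sigma(X_s)dW_s$, which is precisely the second convergence asserted. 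Combining these limits yields $X_t=x_0+\int_0^t f(X_s)\,ds+\int_0^t\sigma(X_s)\,dW_s$.

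I expect the main difficulty to be the uniform (in $n$) control of the new-noise term $\int_0^t\sigma(X^n_s)(dW^{n+1}_s-dW^n_s)$: the whole argument hinges on the geometric decay of the cylindrical Brownian motion tail constants $C_{n,s_*}$, which is available only because $s_*>1$, and on keeping the Gronwall constant independent of $n$. The weighted hypothesis $\mathbf{H2}$ then plays only the role of forcing the Borel--Cantelli series to converge, upgrading the $L^2$ estimate to the pathwise uniform convergence demanded by the statement.
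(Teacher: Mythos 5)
Your proposal is correct and follows essentially the same route as the paper: the same decomposition of the martingale difference into a Lipschitz part driven by $W^{n+1}$ and a new-noise part driven by $W^{n+1}-W^n$, the same Doob/Gronwall estimate yielding $E(\sup_{u\le t}|X^{n+1}_u-X^n_u|^2)\leq C(|x_0^{n+1}-x_0^n|^2+\delta_{n,s_*})e^{Ct}$, and the same Borel--Cantelli argument weighted by $n^{2\alpha}$ via \textbf{H2}, followed by identification of the limit martingale as $\int_0^t\sigma(X_s)dW_s$. The only differences are cosmetic (constant bookkeeping and an explicit remark that the geometric tail $\delta_{n,s_*}$ dominates the polynomial weight, which the paper leaves implicit).
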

\begin{proof} Let $n\geq 0$ be a positive integer and $K$ be an upper bound of the Lipschitz constant for $f$ and $\sigma$. We have 
  \begin{equation}
|X^{n+1}_t -X^{n}_t|\leq |x^{n+1}_0-x^{n}_0|+K\int_0^t|X^{n+1}_s -X^{n}_s|ds +|M^{n+1}_t-M^n_t|\,\label{eq:26-5}
\end{equation}
with $M^n_t=\int_0^t\sigma(X^n_s)dW^n_s$.
Let us consider the last right-hand martingale term. We have 
$$M^{n+1}_t-M^n_t=\int_0^t(\sigma(X^{n+1}_s)-\sigma(X^n_s))dW^{n+1}_s+\int_0^t\sigma(X^n_s)d(W^{n+1}-W^n)_s\,.$$
Using the Doob inequality, we get
\begin{multline}
    E(\sup_{u\leq
      t}|\int_0^u(\sigma(X^{n+1}_s)-\sigma(X^n_s))dW^{n+1}_s|^2_{F_{-s}})\leq
    E(\sup_{u\leq
      t}|\int_0^u(\sigma(X^{n+1}_s)-\sigma(X^n_s))dW^{n+1}_s|^2_{H_{-s}})\\
\leq 4C_{-1,s_*}K^2E(\int_0^u|X^{n+1}_s-X^n_s|^2ds)\label{eq:26-2}
\end{multline}
and with $\delta_{n,s_*} =  C_{n,s_*} -  C_{n+1,s_*}$
\begin{equation}
  \begin{split}
    E(\sup_{u\leq t}|\int_0^u\sigma(X^n_s)d(W^{n+1}-W^n)_s|^2_{F_{-s}})&\leq
    E(\sup_{u\leq t}|\int_0^u\sigma(X^n_s)d(W^{n+1}-W^n)_s|^2_{H_{-s}})\\
&\leq 4|\sigma|^2_{\infty}\delta_{n,s_*} \,.\label{eq:26-3}
  \end{split}
\end{equation}
Thus, for $Z^{n}_t\doteq \sup_{u\leq t}|X^{n+1}_u-X^n_u|^2$, we have

\begin{equation}
E(\sup_{u\leq t} |M^{n+1}_u-M^n_u|^2)\leq 8(K^2C_s\int_0^u E(Z^{n}_s)ds+|\sigma|^2_{\infty} \delta_{n,s_*})\label{eq:26-6}
\end{equation}
and using the inequality $(a+b+c)^2\leq 3(a^2+b^2+c^2)$ and \eqref{eq:26-5}, we get
\begin{equation}
E(Z^{n}_t)\leq 3(|x_0^{n+1}-x_0^n|^2+ 8|\sigma|^2_{\infty}\delta_{n,s_*} +K^2(8C_{-1,s_*}+1)\int_0^tE(Z^{n}_s)ds)\,.\label{eq:26-7}
\end{equation}

Applying Gronwall's Lemma, we get for a sufficiently large constant $A$
\begin{equation}
E(Z^{n}_t)\leq A(|x_0^{n+1}-x_0^n|^2+|\sigma|^2_{\infty}\delta_{n,s_*})\exp(At)\label{eq:26-8}
\end{equation}
and from \textbf{H2}
$$\sum_{n\geq 0}P(\sup_{s\leq t}|X^{n+1}_s-X^n_s|\geq n^{-\alpha})\leq \sum_{n\geq 0}n^{2\alpha}E(Z^n_t)<\infty\,.$$
Borel-Cantelli Lemma gives a.s. $\sup_{s\leq t}|X^{n+1}_s-X^n_s|<n^{-\alpha}$ for $n$ large enough so that $X^n$ converges uniformly on any compact interval $[0,t]$ to a $C(\mathbb{R}_+,E)$-valued process $X$. Similarly from \eqref{eq:26-6}, \eqref{eq:26-8} and \textbf{H2} we get 
$$\sum_{n\geq 0}P(\sup_{s\leq t}|M^{n+1}_s-M^n_s|\geq n^{-\alpha})\leq \sum_{n\geq 0}n^{2\alpha}E(\sup_{s\leq t}|M^{n+1}_s-M^n_s|^2)<\infty$$
and $M^n$ converges uniformly on any compact interval $[0,t]$ to a limit $C(\mathbb{R}_+,E)$-valued process $M$ for which
$$X_t=x_0+\int_0^tf(X_s)ds+M_t\,.$$ 
Let us check that
$$M_t=\int_0^t \sigma(X_s)dW_s\,.$$ 
Indeed, since 
$$E(\sup_{u\leq t}|\int_0^u \sigma(X_s)dW^s-M^n_u|^2)\leq 8(E(\int_0^t K^2|X_s-X^n_s|^2ds)+|\sigma|^2_{\infty}C_{n,s_*}$$
we get for $n\to 0$, $E(\sup_{u\leq t}|\int_0^u \sigma(X_s)dW^s-M_u^n|^2)=0$.
\end{proof}
We extend now the previous result to locally Lipschitz drift $f$ and diffusion $\sigma$.
\begin{description}
\item[H1'] The functions $f$ and $\sigma$ are Lipschitz on any ball of $E_\infty$ and can be uniquely extended as locally Lipschitz functions on $E$. 
\end{description}
\begin{prop}
\label{prop:26-2}
  Let $s>1$ be a positive real number. Under the hypothesis $\mathbf{(H1'-H2)}$, there exists a stopping time $\tau$ and a continuous adapted process $(X_t)_{0\leq t<\tau}$ with values in $E$ such that
\begin{enumerate}
\item $\limsup_{t\to\tau^-}|X_t|=+\infty$ on $(\tau<\infty)$ ($\tau$ is the explosion time)~;
\item for any $r>0$ and any $T>0$, we have almost surely

  \begin{equation}\label{eq:01-2-4}
\left\{
  \begin{array}[h]{l}
    \sup_{0\leq t\leq T}|X^n_{t\wedge\tau_r}-X_{t\wedge\tau_r}|\to
0\\\\
 \sup_{0\leq t\leq T}|\int_0^{t\wedge\tau_r}\sigma(X^n_s)dW^n_s-\int_0^{t\wedge\tau_r}\sigma(X_s)dW_s|\to
0
  \end{array}
\right.
\end{equation}
where $\tau_r\doteq \inf\{t\geq 0\ |\ |X_t|\geq r\}$.
\end{enumerate}
Moreover, for any $t\geq 0$
\begin{equation}
X_{t\wedge\tau_r}=x_0+\int_0^{t\wedge \tau_r} f(X_s)ds +\int_0^{t\wedge\tau_r}\sigma(X_s)dW_s\quad a.s.\label{eq:27-1}
\end{equation}

\end{prop}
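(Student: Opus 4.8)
The plan is to reduce the locally Lipschitz case to the globally Lipschitz Proposition \ref{prop:26-1} by a localization (truncation) argument, and then to patch the truncated solutions together along increasing exit times. First I would truncate the coefficients: fix a smooth radial scalar cutoff $\chi_R:E\to[0,1]$ with $\chi_R\equiv 1$ on $B(0,R)$ and $\chi_R\equiv 0$ outside $B(0,2R)$, and set $f_R\doteq \chi_R f$ and $\sigma_R\doteq \chi_R\sigma$. Since $f,\sigma$ are locally Lipschitz (hence Lipschitz on $B(0,2R)$) and $\chi_R$ is a bounded Lipschitz scalar vanishing outside a ball, the pair $(f_R,\sigma_R)$ is globally Lipschitz on $E_\infty$, extends to a globally Lipschitz pair on $E$, and $\sigma_R$ is bounded; because $\chi_R$ is scalar-valued it preserves the structural conditions of the section, in particular $f_R(E_n)\subset E_n$. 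Thus $(f_R,\sigma_R)$ satisfies \textbf{(H1)}, and as the initial data $x^n_0$ are unchanged, \textbf{(H2)} still holds. Proposition \ref{prop:26-1} then yields, for each $R$, a limit process $X^R\in C(\mathbb{R}_+,E)$ with $\sup_{t\le T}|X^{n,R}_t-X^R_t|\to 0$ and the matching convergence of the stochastic integrals, where $X^{n,R}$ solves the truncated $W^n$-driven equation.

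Next I would establish consistency. The truncated coefficients being globally Lipschitz, the Doob and Gronwall estimates already used in the proof of Proposition \ref{prop:26-1} give pathwise uniqueness for the truncated SDE (and likewise in finite dimensions for the $W^n$-driven equations). Writing $\tau^R\doteq\inf\{t:|X^R_t|\ge R\}$, on $[0,\tau^R)$ the process $X^R$ stays in $B(0,R)$, where $f_R,\sigma_R$ coincide with $f_{R'},\sigma_{R'}$ for every $R'>R$; uniqueness forces $X^R=X^{R'}$ on $[0,\tau^R\wedge\tau^{R'})$. I can therefore define $X_t\doteq X^R_t$ for $t<\tau^R$ unambiguously, set $\tau\doteq\lim_{R\to\infty}\uparrow\tau^R$, and obtain a continuous adapted process on $[0,\tau)$. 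The same uniqueness, applied to the given global solutions $X^n$ of \eqref{eq:18-1-1}, shows that $X^n=X^{n,R}$ up to the first exit of $X^n$ from $B(0,R)$.

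For the explosion statement (1) I note that $\tau^R$ is strictly increasing where finite: at time $\tau^R$ one has $|X^{R'}_{\tau^R}|=|X^R_{\tau^R}|=R<R'$ for $R'>R$, so $\tau^{R'}>\tau^R$; consequently on $(\tau<\infty)$ every $\tau^R<\tau$ and $|X_{\tau^R}|=R\to\infty$, giving $\limsup_{t\to\tau^-}|X_t|=+\infty$. For the stopped convergence (2), fix $r>0$; then $\tau_r=\tau^r$ and on $[0,\tau_r]$ the path stays in $\overline{B(0,r)}\subset B(0,2r)$. Working with the truncation level $2r$, the truncated and original dynamics agree on $\overline{B(0,r)}$, so $X=X^{2r}$ there; and since $X^{n,2r}\to X^{2r}$ uniformly on $[0,T]$, for $n$ large $X^{n,2r}$ stays within $B(0,2r)$ on $[0,\tau_r]$ and hence equals $X^n$ there. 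Therefore $\sup_{t\le T}|X^n_{t\wedge\tau_r}-X_{t\wedge\tau_r}|\le\sup_{t\le T}|X^{n,2r}_t-X^{2r}_t|\to 0$, and the same localization yields the convergence of the stochastic integrals as well as the stopped identity \eqref{eq:27-1}, since $f_{2r}=f$ and $\sigma_{2r}=\sigma$ on $\overline{B(0,r)}$.

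The main obstacle is the interplay between the stopping time $\tau_r$ — defined through the \emph{limit} process — and the finite-dimensional approximations: passing the convergence of Proposition \ref{prop:26-1} through this stopping requires choosing a truncation radius strictly larger than $r$, so that up to $\tau_r$ every process involved obeys the untruncated dynamics and the uniform convergence can be invoked without boundary effects. The verification of pathwise uniqueness for the truncated, globally Lipschitz SDE, needed to make the family $(X^R)_R$ consistent and to identify $X^n$ with $X^{n,R}$ before exit, is the other delicate point, though it follows from the same Doob and Gronwall estimates as in Proposition \ref{prop:26-1}.
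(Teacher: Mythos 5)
Your proposal is correct and follows essentially the same route as the paper: truncate $f$ and $\sigma$ by a scalar Lipschitz cutoff so that Proposition \ref{prop:26-1} applies, identify the truncated and original finite-dimensional solutions before their exit times, use the uniform convergence to control the exit times of the approximations and thus transfer the convergence through the stopping at $\tau_r$, and patch the consistent family $(X^R)_R$ along an increasing sequence of radii to define the maximal solution and the explosion time. The only cosmetic differences are the choice of transition annulus for the cutoff ($[R,2R]$ versus a band of width one) and your making explicit the pathwise-uniqueness step that the paper uses implicitly.
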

\begin{proof}
Let $R>r>0$ be two positive real numbers and 
$$g(x)\doteq \max(\min(1, R-|x|),0)$$ be
a Lipschitz function such that $g^R(x)=1$ if $|x|\leq R$ and $g^R(x)=0$ if $|x|\geq R+1$. We introduce also $f^R=g^Rf$ and $\sigma^R=g^R\sigma$. From \textbf{H1'} $f^R$ and $\sigma^R$ are Lipschitz and we
get from standard results on existence of finite-dimensional SDE a
solution $X^{R,n}\in C(\mathbb{R}_+,E_n)$ of
$dY_t=f^R(Y_s)ds+\sigma^R(Y_s)dW^n_s$ and $Y_0=x^n_0$. From
Proposition \ref{prop:26-1} applied to $f^R$ and $\sigma^R$, there
exists a $C(\mathbb{R}_+,E)$-valued process $X^{R}$ solution of
$dY_t=f^R(Y_s)ds+\sigma^R(Y_s)dW_s$ and $Y_0=x_0$ such that
$P(\sup_{s\leq t}|X^{R}_s-X^{R,n}_s|\to 0)=1$ and 
$P(\sup_{s\leq t}|\int_0^s\sigma(X^R_u)dW_u-\int_0^s\sigma(X^{R,n}_u)dW^n_u|\to 0)=1$. Since $f^R$ and $f$
(resp. $\sigma^{R}$ and $\sigma$) coincide on $|x|\leq R$, we get for
any $n\geq 0$ that
$$X^{R,n}_{t\wedge \tau^{R,n}_R}=X^{n}_{t\wedge \tau^{R,n}_R}\ a.s.$$
  where $\tau^{R,n}_R=\inf\{t\geq 0\ |\ |X^{R,n}_t|\geq R\}$. In
particular $\tau^{R,n}_R=\tau^n_R\doteq \inf\{t\geq 0\ |\
|X^{n}_t|\geq R\}$ almost surely and for any $T\geq 0$,
$P(\sup_{0\leq t\leq T}|X^n_{t\wedge\tau^R_r}-X^R_{t\wedge\tau^R_r}|\to
0)=1$ with $\tau^R_r\doteq \inf\{t\geq 0\ |\ |X^R_t|\geq R\}$ since the uniform convergence of $X^{R,n}$ to $X^R$ on compact set implies that a.s. $\tau^n_R>\tau^R_r$ for $n$ large enough. As a consequence, for two solutions $X^R$ and $X^{R'}$ for $R'>r$, we have $\tau^R_r=\tau^{R'}_r$ a.s. and the trajectories before the common value $\tau^R_r$ are equal. 
Let $R_k$ be an increasing sequence converging to $+\infty$ and $\tau=\lim_{k\to\infty}\tau^{R_k}_{R_k}$. If 
$X_t=\sum_{k=0}^\infty X^{R_{k+1}}_t\mathbf{1}_{\tau^{R_{k}}_{R_k}\leq  t<\tau^{R_{k+1}}_{R_{k+1}}}$ for $t\leq \tau$, the process $(X_t)_{0\leq t<\tau}$ verifies (1), (2) and \eqref{eq:27-1}. 
\end{proof}

\section{Applications and numerical simulations} \label{applications}
This section will present a direct application of the SDE we have studied. In this simplest model, we suppose a shape to be given with an initial momentum and we model the perturbation term with a white noise on the initial shape. Therefore the variance of the noise term is constant in time. 
\\
Let $(q_0,p_0)$ be respectively the initial shape and the initial momentum of the system. As in the deterministic matching with a sufficiently smoothing attachment term, the momentum is a normal $L^2$ vector field on the shape, it is relevant enough for applications to consider $q_0 \in F_s$ and $p_0 \in F_{-s}$. To assume $q_0 \in F_{s}$ means that we chose a parameterization of the shape by $T_m$. We would like our stochastic system to be independent of this initial parameterization. Then we need to understand what is the reparameterization transformation on the deterministic system and on the white noise.
\\
Assume that $\phi$ is a diffeomorphism of $T_m$, then we give the correspondence between the solution $(p_t,q_t)$ from initial conditions $(p_0,q_0)$ and $(\tilde p_t, \tilde q_t)$ with the initial position variable $q_0 \circ \phi$.
$$
\xymatrix{
    T_m \ar[r]^{\phi} \ar[rd]_{q_0 \circ \phi} & T_m \ar[d]^{q_0} \\
     & \ms{R}^d
  }
$$
As the trajectory is entirely determined by the vector field $v_{p_t,q_t}$, the change of variable by $\phi$ gives the correspondence:
\begin{align*}
& \tilde q_t = q_t \circ \phi \, ,\\
& \tilde p_t = \Jac(\phi) \, p_t \circ \phi \, .
\end{align*}
We will denote by $\phi^*(p)$ the pull-back of $p$ under $\phi$. 
\\
The stochastic system verifies the same transformation and the pull-back of the cylindrical Brownian motion is given by
$$ \tilde B \doteq \Jac(\phi) \, B \circ \phi \, .$$
We need the following proposition,
\begin{prop} \label{transfoBrownian}
If $B$ is a cylindrical Brownian motion on $L^2(T_n,\nu)$ with $\nu \ll \mu$ ($\mu$ is the Lebesgue measure) then $\phi^*(B_t) = \Jac(\phi) B_t(\phi(s))$ is a cylindrical Brownian motion on $L^2(T_n,\nu')$ with $\frac{d\nu'}{d\nu} = \frac{1}{\Jac(\phi)}$.
\\
Moreover if $f \in L^2$ with $f \neq 0$ a.e., $f\,B$ is a cylindrical Brownian motion for the measure $\frac{1}{f^2} d\nu$.
\end{prop}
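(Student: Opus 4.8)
The plan is to use the covariance characterisation of a cylindrical Brownian motion and to reduce both assertions to the statement that a suitable linear map between the two $L^2$ spaces is a surjective isometry. Recall from Section~\ref{noise_intro} that a process $X$ is a cylindrical Brownian motion on $L^2(T_n,\nu)$ if and only if, for every pair of test functions $f,g\in L^2(T_n,\nu)$, the real processes $t\mapsto\langle f,X_t\rangle_\nu$ and $t\mapsto\langle g,X_t\rangle_\nu$ are continuous Gaussian martingales with independent increments and $E[\langle f,X_t\rangle_\nu\,\langle g,X_t\rangle_\nu]=t\,\langle f,g\rangle_\nu$; this is the content of \eqref{eq:01-2-10} after polarisation. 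The key observation is that, for each fixed $f$, the quantity $\langle f,\phi^*(B_t)\rangle_{\nu'}$ is a deterministic continuous linear functional of the Gaussian family $(B_t)_t$, so it automatically inherits continuity of trajectories (in $F_{-s}$), the Gaussian law, the martingale property and the independence of increments from $B$. Hence, for each assertion, the only thing left to establish is the covariance identity.

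First I would dispose of the second, simpler assertion. For $f\in L^2$ with $f\neq 0$ a.e.\ the map $M:g\mapsto g/f$ sends $L^2(T_n,\tfrac{1}{f^2}\nu)$ onto $L^2(T_n,\nu)$ isometrically, since $\int (g/f)^2\,d\nu=\int g^2\,\tfrac{1}{f^2}\,d\nu$, with inverse given by multiplication by $f$. Pairing $fB_t$ against $g$ in $L^2(\tfrac{1}{f^2}\nu)$ gives $\langle g,fB_t\rangle_{\frac{1}{f^2}\nu}=\langle g/f,B_t\rangle_\nu=\langle Mg,B_t\rangle_\nu$, whence $E[\langle g,fB_t\rangle\,\langle h,fB_t\rangle]=t\,\langle Mg,Mh\rangle_\nu=t\,\langle g,h\rangle_{\frac{1}{f^2}\nu}$ by the cylindrical property of $B$ on $L^2(\nu)$. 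This is exactly the required covariance, so $fB$ is a cylindrical Brownian motion for $\tfrac{1}{f^2}\,d\nu$.

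For the reparameterisation assertion the strategy is identical. I would introduce the linear map $U$ determined by $\langle f,\phi^*(B_t)\rangle_{\nu'}=\langle Uf,B_t\rangle_\nu$ and verify that it is a surjective isometry from $L^2(T_n,\nu')$ to $L^2(T_n,\nu)$. Writing out the $\nu'$-pairing, inserting $d\nu'=\Jac(\phi)^{-1}\,d\nu$ so that the factor $\Jac(\phi)$ coming from the pull-back $\phi^*(B_t)=\Jac(\phi)\,B_t\circ\phi$ cancels, and then performing the change of variables $u=\phi(s)$ in the remaining integral against $B_t$, identifies $Uf$ explicitly and, after the symmetric computation for $\langle Uf,Ug\rangle_\nu$, yields $\langle Uf,Ug\rangle_\nu=\langle f,g\rangle_{\nu'}$. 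The covariance identity $E[\langle f,\phi^*(B_t)\rangle_{\nu'}\,\langle g,\phi^*(B_t)\rangle_{\nu'}]=t\,\langle f,g\rangle_{\nu'}$ then follows from the cylindrical property of $B$ on $L^2(\nu)$, which is what we want.

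The main obstacle is precisely this change-of-variables bookkeeping: one must track the Radon--Nikodym derivative of $\nu$ through the diffeomorphism $\phi$ and check that it recombines with the Jacobian factors $\Jac(\phi)$ and $|\det D\phi^{-1}|$ exactly so as to reproduce, in the stated normalisation, the density $\tfrac{d\nu'}{d\nu}=\tfrac{1}{\Jac(\phi)}$. Everything of a probabilistic nature---continuity, the Gaussian and martingale structure, independence of increments---is inherited for free from $B$ by linearity, so no new estimate beyond those already proved for the cylindrical Brownian motion is required.
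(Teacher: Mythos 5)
Your proposal is correct, and at bottom it rests on the same fact as the paper's proof: the transformation in question is a unitary between the two $L^2$ spaces. The paper exploits this directly through its own definition of a cylindrical Brownian motion as $\sum_i B^i_t e_i$ over an orthonormal basis: it simply observes that $\sqrt{\Jac(\phi)}\,e_i\circ\phi$ is an orthonormal basis of $L^2(\nu)$ by change of variables, hence $\Jac(\phi)\,e_i\circ\phi$ is one of $L^2(\nu')$ with $d\nu'/d\nu=1/\Jac(\phi)$, and likewise $(fe_i)$ is one of $L^2(\tfrac{1}{f^2}d\nu)$; the conclusion is then immediate from the definition, with no covariance computation at all. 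You instead invoke the covariance characterisation and verify that the transfer maps $M:g\mapsto g/f$ and $U$ are surjective isometries. The two computations are the same modulo polarisation (checking $\langle Uf,Ug\rangle_\nu=\langle f,g\rangle_{\nu'}$ on a basis is exactly checking that the image of a basis is orthonormal), but your route carries a small extra burden: the equivalence between "sum over an orthonormal basis of i.i.d.\ Brownian coefficients" and "Gaussian family with covariance $t\langle f,g\rangle$" is only stated in the paper for the finite-dimensional projections \eqref{eq:01-2-10}, so strictly you should note that it passes to the limit (which it does, by the $H_{-s}$-convergence established in Section \ref{noise_intro}). What your approach buys is robustness: it does not require exhibiting the image of a basis explicitly and would survive a transformation that is an isometry without an obvious basis description. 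Your change-of-variables bookkeeping for $U$ does close up as you predict, provided $\Jac(\phi)$ is read as the Jacobian of $\phi$ relative to $\nu$ (i.e. $\int (h\circ\phi)\Jac(\phi)\,d\nu=\int h\,d\nu$), which is the convention the paper's own basis computation implicitly uses; one then finds $Uf=(f/\Jac(\phi))\circ\phi^{-1}$ and $\langle Uf,Ug\rangle_\nu=\int fg\,\Jac(\phi)^{-1}\,d\nu=\langle f,g\rangle_{\nu'}$.
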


\begin{proof}
If $(e_i)_{i \in \ms{N}}$ is an orthonormal basis of $L^2(T_n,\nu)$, then $\sqrt{\Jac(\phi)} e_i \circ \phi$ is also an orthonormal basis for $L^2(T_n,\nu)$ by a change of variable with $\phi$. As a result, $\Jac(\phi) e_i \circ \phi$ is an orthonormal basis for $L^2(T_n,\nu')$ with $\frac{d\nu'}{d\nu} = \frac{1}{\Jac(\phi)}$.
\\
The second part of the proposition is also straightforward: if $(e_i)_{i \in \ms{N}}$ is an orthonormal basis for $L^2(T_m,\ms{R}^d)$, then $(fe_i)_{i \in \ms{N}}$ is an orthonormal basis for $L^2(T_m,\frac{1}{f^2}d\nu)$. 
\end{proof}

\begin{corr} \label{corr1}
The random process $\phi^*(B_t)$ is equal to $\sqrt{\Jac{(\phi)}}W_t$ with $W_t$ a cylindrical Brownian motion on $L^2(T_m,\ms{R}^d)$.
\end{corr}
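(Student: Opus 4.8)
The plan is to derive the corollary directly from the two parts of Proposition \ref{transfoBrownian} by a single, well-chosen multiplication. In the setting of the corollary, $B$ is the reference cylindrical Brownian motion on $L^2(T_m,\ms{R}^d)$ equipped with the Lebesgue measure $\mu$, and $\phi$ is a diffeomorphism of the compact torus $T_m$; hence $\Jac(\phi)$ is continuous and bounded above and below by strictly positive constants. In particular both $\sqrt{\Jac(\phi)}$ and $1/\sqrt{\Jac(\phi)}$ lie in $L^\infty(T_m) \subset L^2(T_m)$ and are nonzero almost everywhere, which is precisely what the multiplication statement of Proposition \ref{transfoBrownian} requires.

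First I would apply the first assertion of Proposition \ref{transfoBrownian} with $\nu = \mu$. This shows that the pull-back $\phi^*(B_t) = \Jac(\phi)\, B_t(\phi(\cdot))$ is a cylindrical Brownian motion on $L^2(T_m,\nu')$, where $\nu'$ is determined by $\frac{d\nu'}{d\mu} = \frac{1}{\Jac(\phi)}$. At this stage $\phi^*(B_t)$ is already a cylindrical Brownian motion, but for the reweighted measure $\nu'$ rather than for $\mu$.

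Next I would remove this nonstandard weight using the second assertion of the proposition. Setting $f = 1/\sqrt{\Jac(\phi)}$, the process $W_t \doteq f\,\phi^*(B_t)$ is a cylindrical Brownian motion for the measure $\frac{1}{f^2}\,d\nu' = \Jac(\phi)\cdot \frac{1}{\Jac(\phi)}\,d\mu = d\mu$. Thus $W_t$ is a cylindrical Brownian motion on $L^2(T_m,\ms{R}^d)$ with the Lebesgue measure, and rearranging $W_t = \phi^*(B_t)/\sqrt{\Jac(\phi)}$ yields exactly $\phi^*(B_t) = \sqrt{\Jac(\phi)}\,W_t$, the claimed identity.

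I do not expect a genuine obstacle here beyond bookkeeping: the one point that needs a word of justification is the admissibility of the weights, namely that $\sqrt{\Jac(\phi)}$ and its inverse are square-integrable and almost-everywhere nonzero so that the second part of Proposition \ref{transfoBrownian} applies, and this is immediate from the compactness of $T_m$ together with the fact that $\phi$ is a diffeomorphism. If one wished to be fully self-contained, one could also recall the computation underlying the second part of the proposition, i.e. that multiplication by a fixed almost-everywhere-nonzero $L^2$ function carries an orthonormal basis of one weighted $L^2$ space onto an orthonormal basis of the reweighted one, so that the Gaussian structure and covariance are preserved.
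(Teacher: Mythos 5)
Your argument is correct and is essentially the route the paper intends: the corollary is stated without a separate proof precisely because it is the immediate combination of the two parts of Proposition \ref{transfoBrownian}, first pulling back to get a cylindrical Brownian motion for the measure with density $1/\Jac(\phi)$ and then reweighting by $f=1/\sqrt{\Jac(\phi)}$ to return to the Lebesgue measure. Your added check that $\sqrt{\Jac(\phi)}$ and its inverse are bounded and nonvanishing on the compact torus is exactly the bookkeeping needed to invoke the second part of the proposition.
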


Back to our framework with $F_s \times F_{-s}$, we remark that the space $F_s$ is not invariant under a change of coordinates: let $\phi$ be a diffeomorphism of $T_n$ then a priori, if $q \in F_s$ then $q \circ \phi$ may not belong to $F_s$. However if $q$ and $\phi$ are sufficiently smooth then $T(q):=q \circ \phi$ belongs to $F_s$. Hence there exist large subspaces in $F_s$ invariant under this transformation. To go further we could prove that if $s<s'$ then $T(F_{s'}) \subset F_s$.
We would have the same result for the dual spaces: $T(F_{-s}) \subset F_{-s'}$. Furthermore we would like to deal with piecewise diffeomorphisms, this is why the formulation of the following proposition is a little more general.
\begin{prop} \label{reparameter}
Let $q \in F_s(T_n,\ms{R}^k)$ and $\phi : T_n \mapsto T_n$ be a measurable invertible mapping (i.e. there exists $\phi^{-1} :T_n \mapsto T_n$ measurable such that $\phi \circ \phi^{-1} = \text{Id} \; a.e.$). We assume also that $q \circ \phi \in F_s$, $p \circ \phi \in F_{-s}$ and $J:=\frac{d\phi_*^{-1}\mu}{d\mu} \in F_s(T_n,\ms{R})$ such that $J>\ve>0$ a.e. Finally, let $B$ be a cylindrical Brownian motion. 
If $(p_t,q_t)$ is the solution of the system 

\begin{equation} \label{system_simple}
\begin{cases}
dp_t = -\partial_q H(p_t,q_t) + \ve dB_t \\
dq_t = \partial_p H(p_t,q_t) \, ,
\end{cases}
\end{equation}
(with $\ve$ a constant parameter) for initial data $(p_0,q_0)$ and for the path of the white noise $B$ then $(\phi^*(p_t),q_t \circ \phi)$ is the solution of the system 
\begin{equation} \label{system_changed_0}
\begin{cases}
dp_t = -\partial_q H(p_t,q_t) + \ve \sqrt{\Jac(\phi)} d\tilde B_t \\
dq_t = \partial_p H(p_t,q_t) \, ,
\end{cases}
\end{equation}
for initial data $(J\phi^*(p_0),q_0\circ \phi)$ and for the random process $\sqrt{J} \tilde B$, with $\tilde B$ a cylindrical Brownian motion.
\end{prop}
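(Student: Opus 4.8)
The plan is to verify directly that the transformed pair $(\tilde p_t,\tilde q_t)\doteq(\phi^*(p_t),q_t\circ\phi)$ satisfies the system \eqref{system_changed_0}, by pulling back each term of \eqref{system_simple} through $\phi$. The assumptions $q\circ\phi\in F_s$, $p\circ\phi\in F_{-s}$, and $J\in F_s$ with $J>\ve>0$, together with the fact that $F_s$ is an algebra acting on $F_{-s}$, guarantee that all the objects $\tilde p_t$, $\tilde q_t$ and the products $\Jac(\phi)\,p\circ\phi$ remain in the correct spaces. The argument splits into a deterministic part (covariance of the Hamiltonian vector field) and a stochastic part (the behaviour of the cylindrical noise under $\phi$), which can be treated separately because the noise enters \eqref{system_simple} only additively in the momentum equation.

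First I would establish the covariance of the drift. Writing out $\partial_p H(p,q)(x)=\int_{T_n}k(q(x),q(s))p(s)\,ds$ and $\partial_q H(p,q)(x)=-p(x)\int_{T_n}\partial_1 k(q(x),q(s))p(s)\,ds$, I substitute $\tilde q=q\circ\phi$ and $\tilde p=\Jac(\phi)\,p\circ\phi$ into the kernel integrals and perform the change of variables $u=\phi(s)$, $du=\Jac(\phi)(s)\,ds$. The Jacobian produced by $\tilde p$ is exactly absorbed by this change of measure, which yields
\begin{align*}
\partial_p H(\tilde p,\tilde q)&=[\partial_p H(p,q)]\circ\phi\,,\\
\partial_q H(\tilde p,\tilde q)&=\phi^*\big(\partial_q H(p,q)\big)\,,
\end{align*}
i.e. the velocity field transforms by composition and the momentum force by pull-back. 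Consequently the $q$-equation transforms correctly, since $d\tilde q_t=(dq_t)\circ\phi=[\partial_p H(p_t,q_t)]\circ\phi\,dt=\partial_p H(\tilde p_t,\tilde q_t)\,dt$.

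Next I would apply the pull-back operator $\phi^*$ to the momentum equation. Since $\phi^*$ is a deterministic, time-independent bounded linear map on $F_{-s}$, it commutes with the Lebesgue integral in $dt$ and, by linearity of the It\^o integral, with the stochastic integral, producing no It\^o correction. Hence
\begin{equation*}
d\tilde p_t=-\phi^*\big(\partial_q H(p_t,q_t)\big)\,dt+\ve\,\phi^*(dB_t)=-\partial_q H(\tilde p_t,\tilde q_t)\,dt+\ve\,\phi^*(dB_t)\,,
\end{equation*}
using the covariance identity for the drift. For the noise, Proposition \ref{transfoBrownian} and Corollary \ref{corr1} give $\phi^*(B_t)=\sqrt{\Jac(\phi)}\,\tilde B_t$ with $\tilde B$ a cylindrical Brownian motion on $L^2(T_n,\ms{R}^d)$; substituting $\phi^*(dB_t)=\sqrt{\Jac(\phi)}\,d\tilde B_t$ reproduces the noise term of \eqref{system_changed_0}. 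Identifying $J=\Jac(\phi)=\frac{d\phi_*^{-1}\mu}{d\mu}$, the driving process is $\sqrt{J}\,\tilde B$ and the initial momentum is the pull-back of $p_0$, matching the stated data up to the bookkeeping of this Jacobian factor.

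The main obstacle is the rigorous treatment of the stochastic part: $B_t$ is a distribution-valued (cylindrical) process living in $F_{-s}$, so the expression $\phi^*(B_t)=\Jac(\phi)\,B_t\circ\phi$ must be interpreted through its action on test functions, and the identity $\phi^*(B_t)=\sqrt{\Jac(\phi)}\,\tilde B_t$ must be justified at the level of covariances as in Proposition \ref{transfoBrownian} rather than pointwise. The second delicate point is purely one of accounting: tracking whether each Jacobian factor attaches to the momentum (as a density) or to the measure defining the $L^2$ pairing, so that the factors $\Jac(\phi)$ in the pull-back of $p$, $\sqrt{\Jac(\phi)}$ in the noise, and $J$ in the initial condition are all consistent. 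Once the covariance computation and the noise identity are in hand, the conclusion that $(\tilde p_t,\tilde q_t)$ solves \eqref{system_changed_0} follows directly, and uniqueness of strong solutions (Proposition \ref{prop31-1-1}) identifies it as \emph{the} solution.
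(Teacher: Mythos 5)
Your proposal is correct and follows essentially the route the paper itself takes: the paper gives no formal proof of Proposition \ref{reparameter}, but the discussion immediately preceding it derives exactly your deterministic covariance $\tilde q_t=q_t\circ\phi$, $\tilde p_t=\Jac(\phi)\,p_t\circ\phi$ by the change of variables in the kernel integrals, and then invokes Proposition \ref{transfoBrownian} and Corollary \ref{corr1} to identify $\phi^*(B_t)=\sqrt{\Jac(\phi)}\,\tilde B_t$. Your write-up simply makes these steps explicit, including the Jacobian bookkeeping and the covariance-level interpretation of the pulled-back noise that the paper leaves implicit.
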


The random process $\sqrt{J} \tilde B$ can be treated in our framework with the map $\sigma: E_n \mapsto L(F_s)$ given by the multiplication with $p_n(\sqrt{J})$: As $J > \ve$ we have that $\sqrt{J} \in F_s$ by smoothness of the square root outside $0$. Thus $\sigma$ is a \lip map such that $\sigma(H_n) \subset H_n$ since the projection is \lip and the multiplication in $F_{-s}$ by an element of $F_s$ is \lip too. It leads to

\begin{theo} \label{theo_first_model}
Under assumption \textbf{H0}, let $f \in F_s$ and $(p_0,q_0) \in F_{-s} \times F_s$ be initial conditions verifying that there exists $s''<s$ and $s'>s$ such that 
$q_0 \in F_{s'}$ and $p_0 \in F_{s''}$.
Then random solutions of the following system with initial conditions $(p_0,q_0)$ 
\begin{equation} \label{system_changed_1}
\begin{cases}
dp_t = -\partial_q H(p_t,q_t) + f d B_t \\
dq_t = \partial_p H(p_t,q_t) \, ,
\end{cases}
\end{equation}
are defined for all time and there is an almost sure convergence of the approximations (also defined for all time)
\begin{equation} \label{system_changed_2}
\begin{cases}
dp_t^n = -\partial_q H(p_t^n,q_t^n) + f^n d B_t \\
dq_t^n = \partial_p H(p_t^n,q_t^n) \, ,
\end{cases}
\end{equation}
 to the previous random solution with initial conditions $(p_0^n,q_0^n)$ the projection on $E_n$ of $(p_0,q_0)$.
\end{theo}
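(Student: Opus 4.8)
The plan is to read the system \eqref{system_changed_1} as the abstract SDE $dX_t = b(X_t)\,dt + \sigma(X_t)\,dW_t$ on $E = F_{-s}\times F_s$ studied in Section \ref{generalsolutions}, and then to check that all the hypotheses of Propositions \ref{prop31-1-2}, \ref{prop31-1-1} and \ref{prop:26-2} hold in this particular case. Here the drift is the Hamiltonian vector field $b(p,q) = (-\partial_q H(p,q), \partial_p H(p,q))$, which by Proposition \ref{prop19} is locally Lipschitz on $E$ under \textbf{H0}, and which maps each $E_n$ into itself because the Hamiltonian flow preserves the piecewise-constant structure. The diffusion is the state-independent operator $\sigma = $ multiplication by $f\in F_s$ acting on $P = F_{-s}$; since $F_s$ is a Hilbert algebra (Theorem \ref{resume_P_Q}) this operator is bounded on $F_{-s}$ by duality, and being constant in the state variable it is trivially Lipschitz and bounded. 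Its finite-dimensional version $\sigma^n = $ multiplication by $f^n=p_n f$, with $p_n$ the projection onto $H_n$ and $f^n\to f$ in $F_s$, satisfies $\sigma^n(H_n)\subset H_n$ because the product of two functions constant on the same dyadic partition stays piecewise constant.

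First I would verify \textbf{H0'} through the trace Lemma \ref{lemme_de_trace}. The kernel is bounded on the diagonal by \eqref{IneqRKHS}, i.e.\ $k(a,a)\leq K^2\mathrm{Id}$, and multiplication by $f^n$ maps $L^1$ into $L^1$ with $|f^n g|_{L^1}\leq |f^n|_\infty |g|_{L^1}\leq C_s|f|_{F_s}|g|_{L^1}$, using Remark \ref{domin} (Proposition \ref{normes}) and $|f^n|_\infty\leq |f|_\infty$. This bound is uniform in $n$, so Lemma \ref{lemme_de_trace} yields the trace control \textbf{H0'} with a constant independent of $n$. Together with \textbf{H0}, Proposition \ref{prop31-1-2} then guarantees that every approximation $X^n$ solving \eqref{system_changed_2} has infinite explosion time, hence is defined for all time.

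Next I would check \textbf{H2}. Because the $F_r$-norms are weighted $\ell^2$ sums of Haar coefficients with level-$n$ weight $2^{nr}$, the extra regularity hypotheses produce a positive exponential gap: $q_0\in F_{s'}$ with $s'>s$ gives $|q_0^{n+1}-q_0^n|_{F_s}^2\lesssim 2^{-n(s'-s)}a_n$ with $\sum_n a_n<\infty$, and $p_0\in F_{s''}$ gives likewise $|p_0^{n+1}-p_0^n|_{F_{-s}}^2\lesssim 2^{-n(s+s'')}b_n$ with $\sum_n b_n<\infty$ (the relevant gap being $s+s''>0$). Since geometric decay dominates any polynomial weight, $\sum_{n}n^{2\alpha}|x_0^{n+1}-x_0^n|^2<\infty$ for every $\alpha>1$, which is exactly \textbf{H2} for the projected initial data $(p_0^n,q_0^n)$.

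Finally I would assemble the pieces. With \textbf{H0}, \textbf{H0'} and \textbf{H2} in hand, and the drift only locally Lipschitz, the approximation Proposition \ref{prop:26-2} provides a limit process $X$ with, almost surely, $\sup_{t\leq T}|X^n_{t\wedge\tau_r}-X_{t\wedge\tau_r}|\to 0$ and the corresponding convergence of the stochastic integrals up to each $\tau_r$. The remaining task, following the argument of Proposition \ref{prop31-1-1}, is to promote this local solution to a global one: Fatou applied to the uniform Hamiltonian estimate \eqref{eq:31-7} gives $E(\int_0^{t\wedge\tau}H(X_s)\,ds)<\infty$, the flow estimate of Proposition \ref{prop19} then controls $|Q_{t\wedge\tau_r}|_{F_s}$ as in \eqref{eq:31-2}, and the Gronwall bound \eqref{eq:01-2-7} controls $|P_{t\wedge\tau_r}|_{F_{-s}}$, whence $(\tau\leq t)\subset(\max\{K_t,A_t\}\geq r)$ and $P(\tau<\infty)=0$. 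The main obstacle is precisely this last transfer of non-explosion to the limit, which hinges on every estimate — above all the trace bound \textbf{H0'} — being uniform in $n$; this uniformity is what turns the merely local existence of the infinite-dimensional equation into a solution defined for all time.
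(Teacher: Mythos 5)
Your proposal is correct and follows essentially the same route as the paper: read \eqref{system_changed_1} as the abstract SDE of Section \ref{generalsolutions} with $\sigma$ the multiplication by $f^n$, apply Proposition \ref{prop31-1-1}, and check \textbf{H0'} and \textbf{H2}, the latter via the geometric decay of the dyadic projection increments produced by the regularity gap $s'>s$ and $s''$. The only (cosmetic) difference is that you verify \textbf{H0'} by invoking Lemma \ref{lemme_de_trace} together with the uniform $L^1$-bound $|f^n g|_{L^1}\leq C_s|f|_{F_s}|g|_{L^1}$, whereas the paper gives a one-line direct estimate exploiting the diagonal structure of the multiplication operator --- a shortcut it explicitly notes is interchangeable with the Lemma.
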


\begin{proof}
This is the application of proposition \ref{prop31-1-1}. We verify hypothesis \textbf{H0'} and control the trace of the operator. 
Remark that we need to control the Hilbert-Schmidt norm of the operator $\sigma^T k(q,q) \sigma$ with $\sigma$ the multiplication by an element of $F_s$. 
This is a consequence of Lemma \ref{lemme_de_trace}, but we give here a simpler proof, since the multiplication is a diagonal operator.
We have
\begin{equation*}
|f^n|_{\infty} \leq C_s|f^n|_{F_s} \leq C_s |f|_{F_s} \, ,  
\end{equation*}
then we get for any $q \in F_s$, 
$$0 \leq \int_{T_m} f^n(s)^T k(q(s),q(s)) f^n(s) ds \leq  C_s^2 |k|_{\infty} |f|_{F_s}^2 \text{Vol}(T_m) \, .$$
Thus hypothesis \textbf{H0'} is verified.
\\
From the assumption on the initial conditions, if $q_0 \in F_{s'}$ with $s' > s$ we have
$$ |q_0|_{F_{s'}}^2 = \sum_{n=0}^\infty 2^{n(s'-s)} |q_0^{n+1} - q_0^n|_{F_s}^2\, < +\infty \, .$$
Moreover, if $p_0 \in F_{s''}$ then 
$$ |p_0|_{F_{s'}}^2 = \sum_{n=0}^\infty 2^{n(s-s'')} |p_0^{n+1} - p_0^n|_{F_s}^2\, < +\infty \, .$$
Hence \textbf{H2} is verified.
\end{proof}
\noindent
Remark that \textbf{H2} is not so demanding as proved in this theorem.

\vspace{0.3cm}
\noindent
Now we can discuss some basic situations to simulate the stochastic system \eqref{system_simple}. The preceding result will enable us to deal with a wide family of shapes and noises. We first develop the case of curves.

\begin{prop}
Let $1<s<2$ a real number and $f: S_1 \mapsto \ms{R}^2$ be a piecewise $C^2$ mapping such that $|f'| > \ve >0$ with $\ve$ a real positive number. If we denote by $\alpha$ the arc-length parameterization of $f$ then there exists a piecewise affine homeomorphism $\phi : S_1 \mapsto S_1$ such that $\phi \in F_s$ and $\alpha \circ \phi$ is in $F_s$. 
\end{prop}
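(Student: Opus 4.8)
The plan is to build $\phi$ by hand, relocating the finitely many break-points of the arc-length parameterization onto the dyadic grid, and then to deduce both membership statements from the piecewise-$C^2$ (hence piecewise-Lipschitz) structure via Proposition \ref{contains_lip}. First I would record the regularity of $\alpha$. Writing $0=t_0<t_1<\dots<t_N=1$ for the break-points on which $f$ is $C^2$, the length function $L(t)=\int_0^t|f'(u)|\,du$ is piecewise $C^2$ and, since $L'=|f'|\geq\ve>0$, strictly increasing with a piecewise $C^2$ inverse. Hence, after rescaling so that the total length of the parameter interval is $1$, the arc-length parameterization $\alpha=f\circ L^{-1}:S_1\to\ms{R}^2$ is $C^2$ on each interval $[\sigma_{i-1},\sigma_i]$ with $\sigma_i=L(t_i)/L(1)$, and it is globally \lip because it has constant speed on each piece and stays continuous across the $\sigma_i$ (the curve only develops corners, never cusps, thanks to $|f'|>\ve$).

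Next I would construct $\phi$. Using the density of the dyadic rationals, I pick $0=d_0<d_1<\dots<d_N=1$ with every $d_i=k_i2^{-n_0}$ for a common large $n_0$, strictly increasing, and let $\phi$ be the unique continuous map sending each $[d_{i-1},d_i]$ affinely and increasingly onto $[\sigma_{i-1},\sigma_i]$, i.e. $\phi(u)=\sigma_{i-1}+\tfrac{\sigma_i-\sigma_{i-1}}{d_i-d_{i-1}}(u-d_{i-1})$ on $[d_{i-1},d_i]$. Since all slopes are strictly positive and the pieces match at the endpoints with $\phi(0)=0$, $\phi(1)=1$, the map $\phi$ is a strictly increasing, piecewise affine homeomorphism of $S_1$ whose break-points $d_i$ lie on the dyadic grid.

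The verification is then immediate. As $\phi$ is affine, hence \lip, on each dyadic cell $I_{n_0,k}$, it lies in $\text{Lip}_{dyad}(S_1)$ and so $\phi\in F_s$ by Proposition \ref{contains_lip}, using $s<2$. Likewise, on each $[d_{i-1},d_i]$ the map $\alpha\circ\phi$ is the composition of an affine function with the $C^2$ function $\alpha|_{[\sigma_{i-1},\sigma_i]}$, hence $C^2$; because the $d_i$ are dyadic, $\alpha\circ\phi$ is piecewise \lip on the partition at scale $2^{-n_0}$, so $\alpha\circ\phi\in\text{Lip}_{dyad}(S_1)\subset F_s$ again by Proposition \ref{contains_lip}.

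The proof is essentially the construction, and there is no analytic obstacle: the hypothesis $s<2$ makes \lip regularity sufficient for $F_s$-membership, so the entire content is the alignment of the corners with the dyadic structure while keeping $\phi$ a genuine orientation-preserving homeomorphism of the circle (the only bookkeeping requiring care, because of the periodicity of $S_1$). I would also emphasize why the \emph{affine} choice is the right one for the downstream application: the Jacobian $\phi'=(\sigma_i-\sigma_{i-1})/(d_i-d_{i-1})$ is piecewise constant, strictly positive and supported on a dyadic partition, so it too lies in $F_s$ and is bounded below, which is exactly the hypothesis $J>\ve>0$ needed to feed this reparameterization into the noise term of Proposition \ref{reparameter}.
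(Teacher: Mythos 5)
Your proof is correct and follows essentially the same route as the paper: place the finitely many corner points of the arc-length parameterization at dyadic locations via a piecewise affine homeomorphism, then invoke the containment of dyadically-piecewise-Lipschitz functions in $F_s$ for $s<2$ to conclude for both $\phi$ and $\alpha\circ\phi$. Your version is more detailed (in particular the justification that $\alpha$ itself is piecewise $C^2$ and Lipschitz via the inverse of the length function, which the paper leaves implicit), but the construction and the key lemma are the same.
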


\begin{proof}
Let us assume that the arc-length parameterization $\alpha$ has $p$ singularities at points $x_1 < \ldots < x_n \in S_1$. Then, we define $\phi$ as the linear interpolation on $n$ dyadic points in $S_1$,
$d_1 < \ldots < d_n$ with for images respectively $x_1 < \ldots < x_n$. As $\phi$ is piecewise affine it belongs to $F_s$ thanks to proposition \ref{lipstability}. With the same argument we conclude that $\alpha \circ \phi \in F_s$. 
\end{proof}

The previous proposition tells us that we can consider our stochastic system on an initial shape with a white noise which is white with respect to the arc length.
Hereunder are some simulations to illustrate the convergence of the landmark simulation and the kind of trajectories generated by this model. The figure Fig.~\ref{simu1} presents the convergence of the image of the unit circle under the flow generated by the system with an increasing number of landmarks. We chose to illustrate this convergence since in some sense it just shows the convergence of the vector fields generated by the stochastic system.

\begin{figure}[htbp]
	\centering
		\includegraphics[width=8cm]{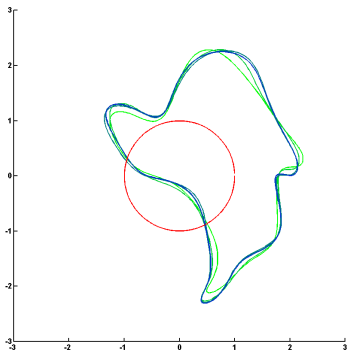}
	\caption{\footnotesize{The convergence when increasing the number of landmarks. From $32$ landmarks to $512$ landmarks for $q_0$ the unit circle and $p_0=0$, the curves from green to blue show the image of the unit circle under the flow for the $2^i$ particles for $i\in [5,9]$. The width of the Gaussian kernel is $0.5$ and the intensity of the noise is $\ve=1.5$. 
}
}
\label{simu1}
\end{figure}

We may also want to see how the shapes are distributed around the target shape, or to learn something about a neighborhood of a shape in this stochastic model. 
We plotted few simulations of the model for the unit circle as initial shape and an initial momentum which is null to see how the neighborhood of the circle looks like. In the figure Fig.~\ref{vois} the red curve is the unit circle and the other curves from green to blue are random deformations of the unit circle for $200$ landmarks.
The kernel size is $0.7$ and the standard deviation of the noise (normalised with the number of particles) is relatively high at $\ve=3$. 
\\
The last simulation shows again the convergence of the landmark discretization as in figure Fig.~\ref{simu1} but with a structure of noise which is null on the particles $q_i$ initially such that $p_x(q_i) <0$ (i.e. with negative abscissa). It illustrates the locality of the noise and we can remark the size (variance) of the kernel in this simulation. In this case the kernel size is smaller at $0.25$ and the standard deviation of the noise is $1.5$.

\begin{figure}[htbp]
 \begin{minipage}{.45\linewidth}
  \centering	\includegraphics[width=7cm]{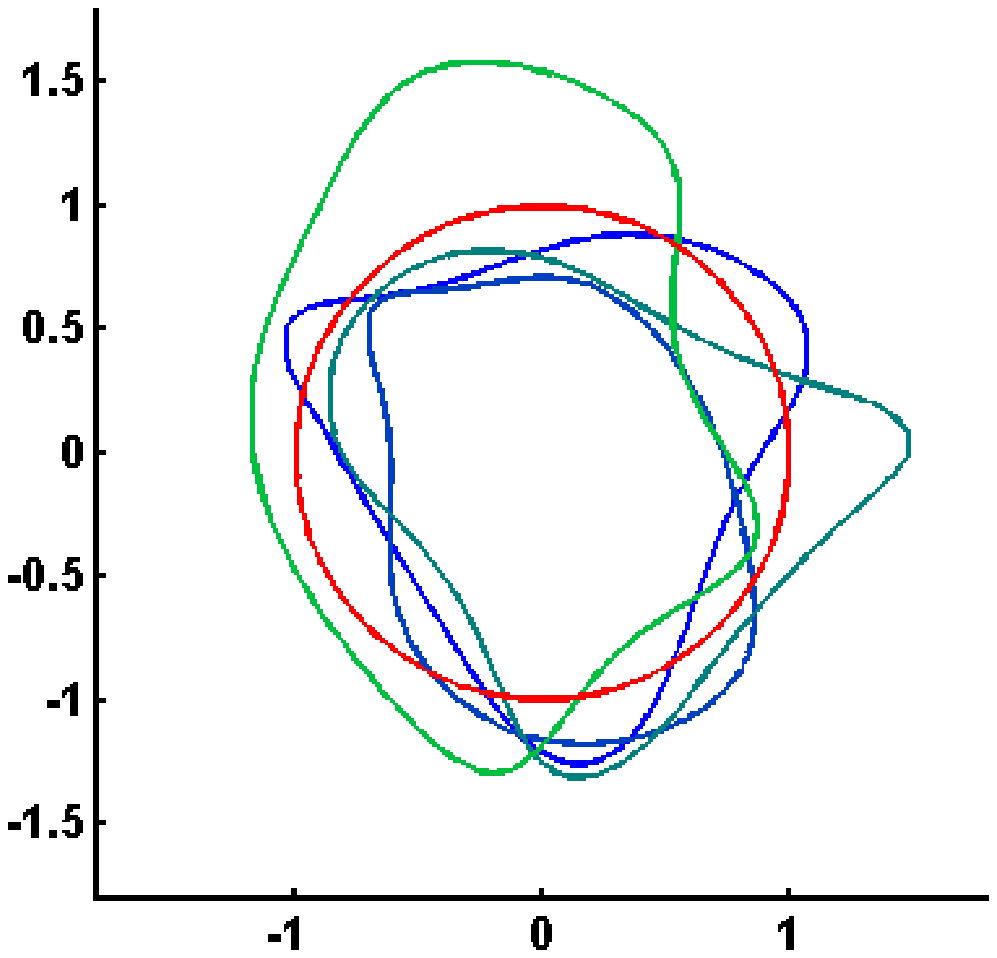}
 \caption{\footnotesize{$5$ simulations of random deformations of the circle.}}
 \label{vois}
\end{minipage} \hfill
\begin{minipage}{.5\linewidth}
\centering\includegraphics[width=7cm]{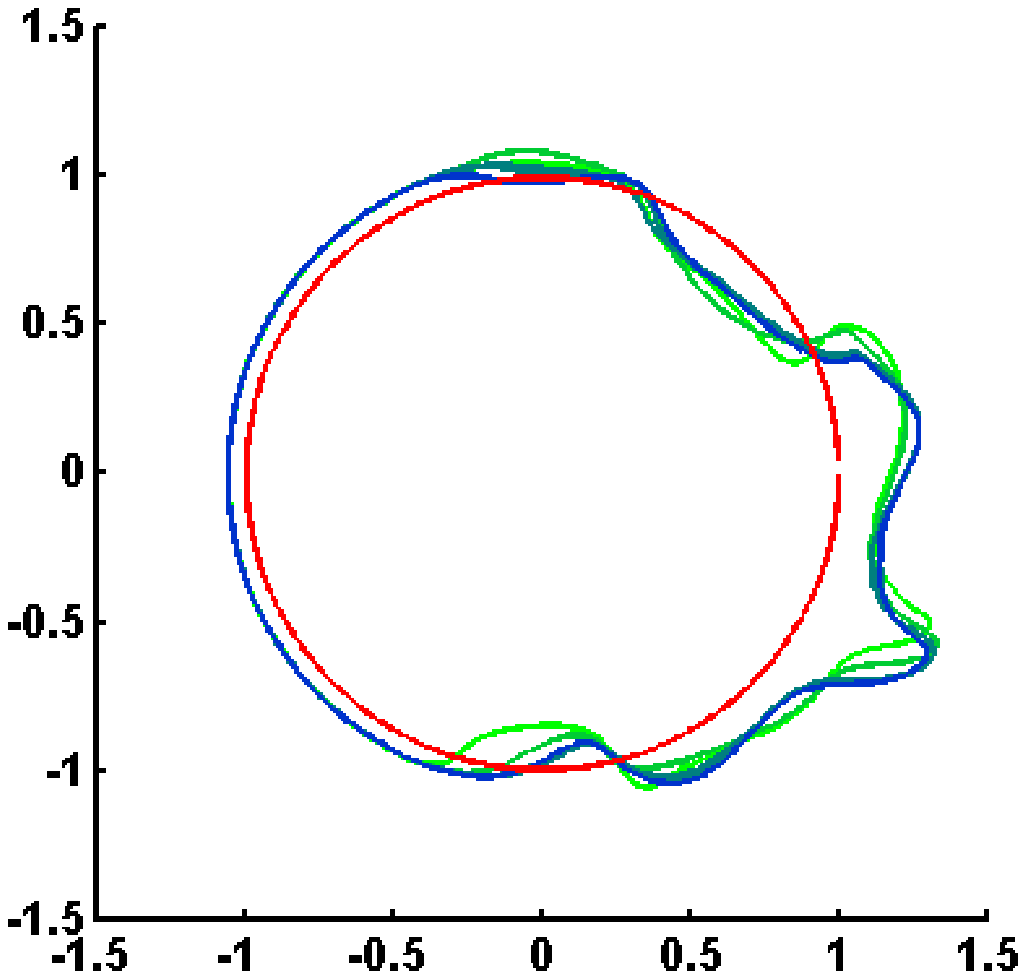}
\caption{\footnotesize{Local noise.}}
  \label{locale}
 \end{minipage} \hfill
\end{figure}

Theorem \ref{theo_first_model} enables us to deal with a white noise with respect to the induced measure of a manifold embedded in $\ms{R}^k$.  
\begin{prop}
There exists $\phi:T_2 \mapsto S_2$ verifying the assumptions of proposition \ref{reparameter}. If $B_t$ is the cylindrical Brownian motion for the measure associated with the induced metric of $S_2$ in $\ms{R}^3$, then $\phi^*(B_t)$ can be written as $\sqrt{\Jac(\phi)}dW_t$ with $W_t$ a cylindrical Brownian motion. 
\end{prop}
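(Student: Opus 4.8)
The plan is to exhibit one explicit area-controlled parameterization of $S_2$ by $T_2$ and then to transport the change-of-basis computation behind Proposition~\ref{transfoBrownian} and Corollary~\ref{corr1} to the present situation, where the source and target spaces no longer coincide. For $\phi$ I would use the cylindrical equal-area (Lambert) chart: identifying $T_2=[0,1]^2$ with its usual gluings, set
$$\phi(u,v) = \bigl(\rho(v)\cos(2\pi u),\ \rho(v)\sin(2\pi u),\ 2v-1\bigr),\qquad \rho(v) = 2\sqrt{v(1-v)}\,,$$
so that the first variable wraps around the axis of revolution (longitude) while $v$ encodes the height $z=2v-1$. This $\phi$ is a $C^\infty$ bijection of $\{0<v<1\}$ onto $S_2$ minus its two poles; the poles together with the edges $\{v\in\{0,1\}\}$ form a $\mu$-null set, so $\phi$ is invertible $\mu$-a.e.\ with a measurable inverse ($z\mapsto v$ and $\theta\mapsto u$), exactly as Proposition~\ref{reparameter} requires.

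The first genuine step is to record the reason for choosing an equal-area chart. By Archimedes' hat-box theorem the spherical area element is $d\theta\,dz$ in the coordinates $(\theta,z)=(2\pi u,\,2v-1)$, hence $d\nu(\phi(s))=4\pi\,du\,dv$ and the density $J=\Jac(\phi)\equiv 4\pi$ is constant. In particular $J\in F_s$ trivially and $J>\ve>0$ a.e.\ for any $\ve<4\pi$, so the delicate lower-bound hypothesis of Proposition~\ref{reparameter} is met without effort. It then remains to check the regularity requirements $\phi\in F_s$ and (for admissible data) $q_0\circ\phi\in F_s$, $p_0\circ\phi\in F_{-s}$. Away from $v\in\{0,1\}$ every component of $\phi$ is smooth, and the only singularity is the Hölder-$\tfrac12$ behaviour $\rho(v)\sim 2\sqrt v$ at the poles; a direct estimate of the dyadic increments shows $\sqrt{\,\cdot\,}\in F_s$ for $s<2$ (the singular dyadic cell at the pole contributes a convergent tail $\sum_n 2^{n(s-2)-3}$), while the regular cells are controlled as for Lipschitz functions through Propositions~\ref{contains_lip} and \ref{bigenough}. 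Since $\rho(v)\cos(2\pi u)$ and $\rho(v)\sin(2\pi u)$ are tensor products, $\phi\in F_s\otimes F_{s'}=F_{s,s'}$ for $1<s,s'<2$, and the very same increment estimate places $q_0\circ\phi$ in $F_s$ (and, dually, $p_0\circ\phi$ in $F_{-s}$) whenever the shape and momentum are drawn from a suitable smoothness class.

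With $\phi$ fixed, the identity $\phi^*(B_t)=\sqrt{\Jac(\phi)}\,W_t$ is the analogue of Corollary~\ref{corr1}, and I would prove it exactly as there. Let $(e_i)_i$ be an orthonormal basis of $L^2(S_2,\nu)$ and write the cylindrical Brownian motion as $B_t=\sum_i B^i_t\,e_i$ with $(B^i)$ i.i.d.\ standard one-dimensional Brownian motions. The change of variables $d\nu(\phi(s))=\Jac(\phi)(s)\,ds$ gives, for all $i,j$,
$$\int_{T_2}\sqrt{\Jac(\phi)}\,(e_i\circ\phi)\,\sqrt{\Jac(\phi)}\,(e_j\circ\phi)\,ds = \int_{S_2} e_i\,e_j\,d\nu = \delta_{ij}\,,$$
so that $g\mapsto \sqrt{\Jac(\phi)}\,(g\circ\phi)$ is a unitary isomorphism $L^2(S_2,\nu)\to L^2(T_2,\ms{R}^3)$ and $\bigl(\sqrt{\Jac(\phi)}\,(e_i\circ\phi)\bigr)_i$ is an orthonormal basis of $L^2(T_2,\ms{R}^3)$. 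Hence $W_t\doteq\sum_i B^i_t\,\sqrt{\Jac(\phi)}\,(e_i\circ\phi)$ is a cylindrical Brownian motion on $L^2(T_2,\ms{R}^3)$, and by the definition of the pull-back,
$$\phi^*(B_t)=\Jac(\phi)\,(B_t\circ\phi)=\sqrt{\Jac(\phi)}\Bigl(\sqrt{\Jac(\phi)}\sum_i B^i_t\,(e_i\circ\phi)\Bigr)=\sqrt{\Jac(\phi)}\,W_t\,,$$
which is the claim; combined with Theorem~\ref{theo_first_model}, applied to the bounded multiplier $f=\sqrt{\Jac(\phi)}\in F_s$, this also yields well-posedness of the pulled-back system.

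The hard part is purely topological: since $S_2$ is simply connected there is no local diffeomorphism $T_2\to S_2$, so any admissible $\phi$ must degenerate somewhere, and the entire argument hinges on confining that degeneracy to a $\mu$-null set (the two poles) and on verifying that the resulting square-root singularities of $\phi$ (and of any composition $q_0\circ\phi$) still lie in $F_s$ for $s<2$. The equal-area choice is precisely what keeps this under control, because it forces $\Jac(\phi)$ to be \emph{constant}, thereby trivializing both the lower bound $J>\ve$ and the membership $\sqrt{\Jac(\phi)}\in F_s$; with a generic chart one would instead have to fight the vanishing of $\Jac(\phi)$ at the poles.
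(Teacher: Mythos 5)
Your proof is correct, but it takes a genuinely different route from the paper. The paper's argument is very short: it maps a partition of $T_2$ onto the six faces of a cube and composes with the radial projection onto $S_2$, obtaining a \emph{piecewise smooth} $\phi$ whose Jacobian is non-constant but bounded above and below; the $F_s$-membership of $\phi$ and of compositions $q_0\circ\phi$ then falls out of the already-established inclusion $C^{p}_{\text{dyad}}(T_n,\ms{R}^k)\subset F_s$, and the Brownian identity is delegated to Corollary \ref{corr1} exactly as you do. You instead choose the Lambert equal-area cylindrical chart, which inverts the trade-off: the Jacobian becomes the constant $4\pi$ (so $J>\ve$ and $\sqrt{J}\in F_s$ are trivial, and the multiplier fed into Theorem \ref{theo_first_model} is a constant), but the price is a genuine singularity of $\rho(v)=2\sqrt{v(1-v)}$ at the poles, which is \emph{not} covered by the paper's general lemmas (Proposition \ref{lipstability} needs $G$ Lipschitz and Proposition \ref{bigenough} needs a Lipschitz derivative, neither of which $\sqrt{\cdot}$ satisfies at $0$), so you must — and correctly do — run the dyadic increment estimate by hand, getting the convergent tail $\sum_n 2^{n(s-2)}$ for $s<2$ on the singular cells and Lipschitz control elsewhere. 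Your unitary-isomorphism verification of $\phi^*(B_t)=\sqrt{\Jac(\phi)}\,W_t$ coincides with the mechanism of Proposition \ref{transfoBrownian}, and your topological remark that any $\phi:T_2\to S_2$ must degenerate somewhere correctly identifies why some concession (either piecewise-smoothness across cut lines, as in the paper, or a polar singularity, as in your chart) is unavoidable. Both proofs are valid; the paper's is shorter because it stays entirely inside the dyadic-piecewise-smooth class, while yours is more explicit and isolates the one place where an ad hoc estimate is really needed.
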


\begin{proof}
Consider a dyadic partition of $T_2$ in $6$ squares. The radial projection of the cube on the sphere gives the desired result. The map $\phi$ is given by the mapping of the dyadic partition on the six faces of the cube, which is piecewise smooth and the Jacobian is bounded below. Then we have the desired result thanks to the corollary \ref{corr1}.
\end{proof}

\begin{figure}[htbp]
 \begin{minipage}{.45\linewidth}
  \centering	\includegraphics[width=7cm]{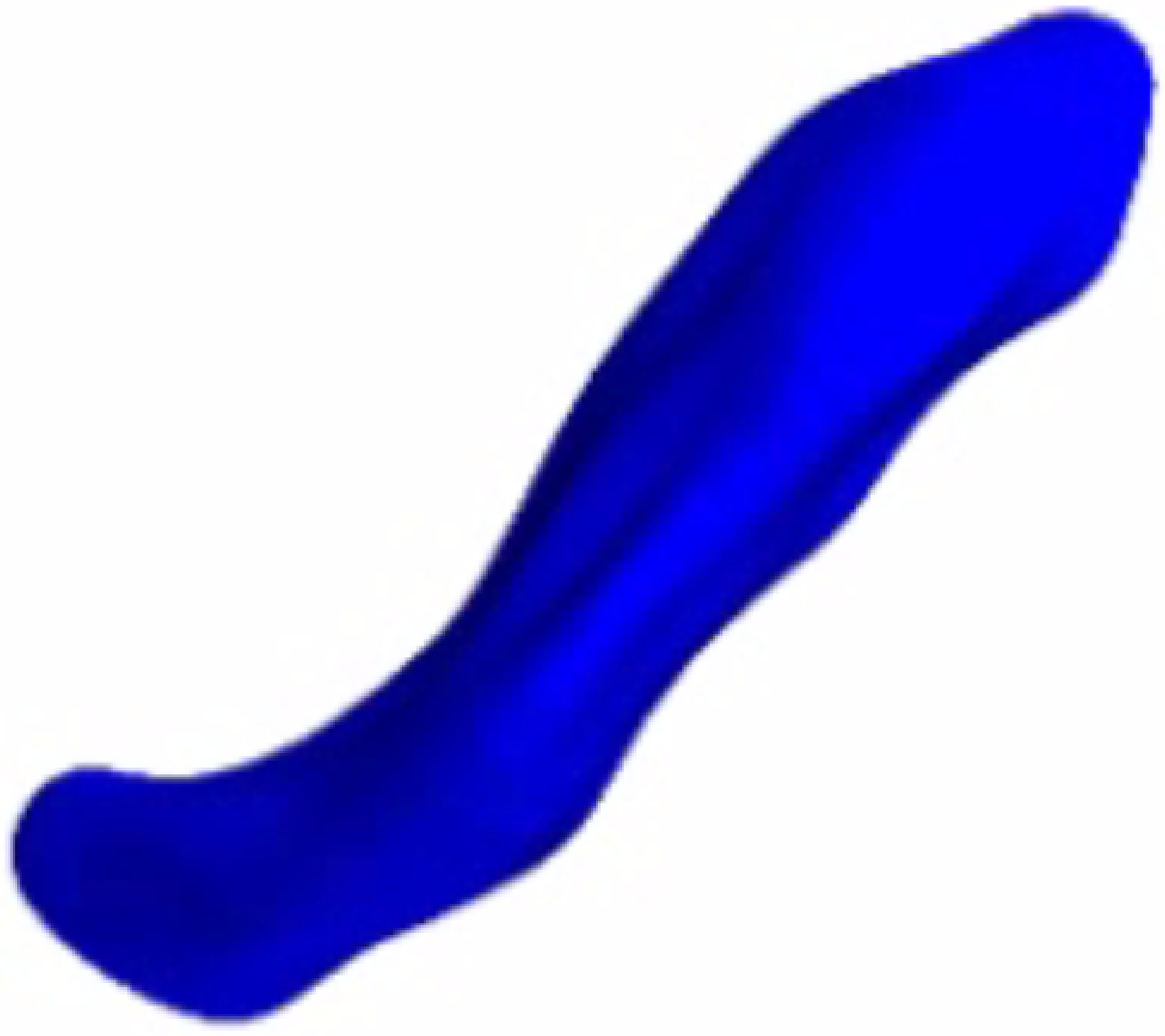}
 \caption{\footnotesize{The initial hippocampus}}
 \label{initialhippo}
\end{minipage} \hfill
\begin{minipage}{.5\linewidth}
\centering\includegraphics[width=7cm]{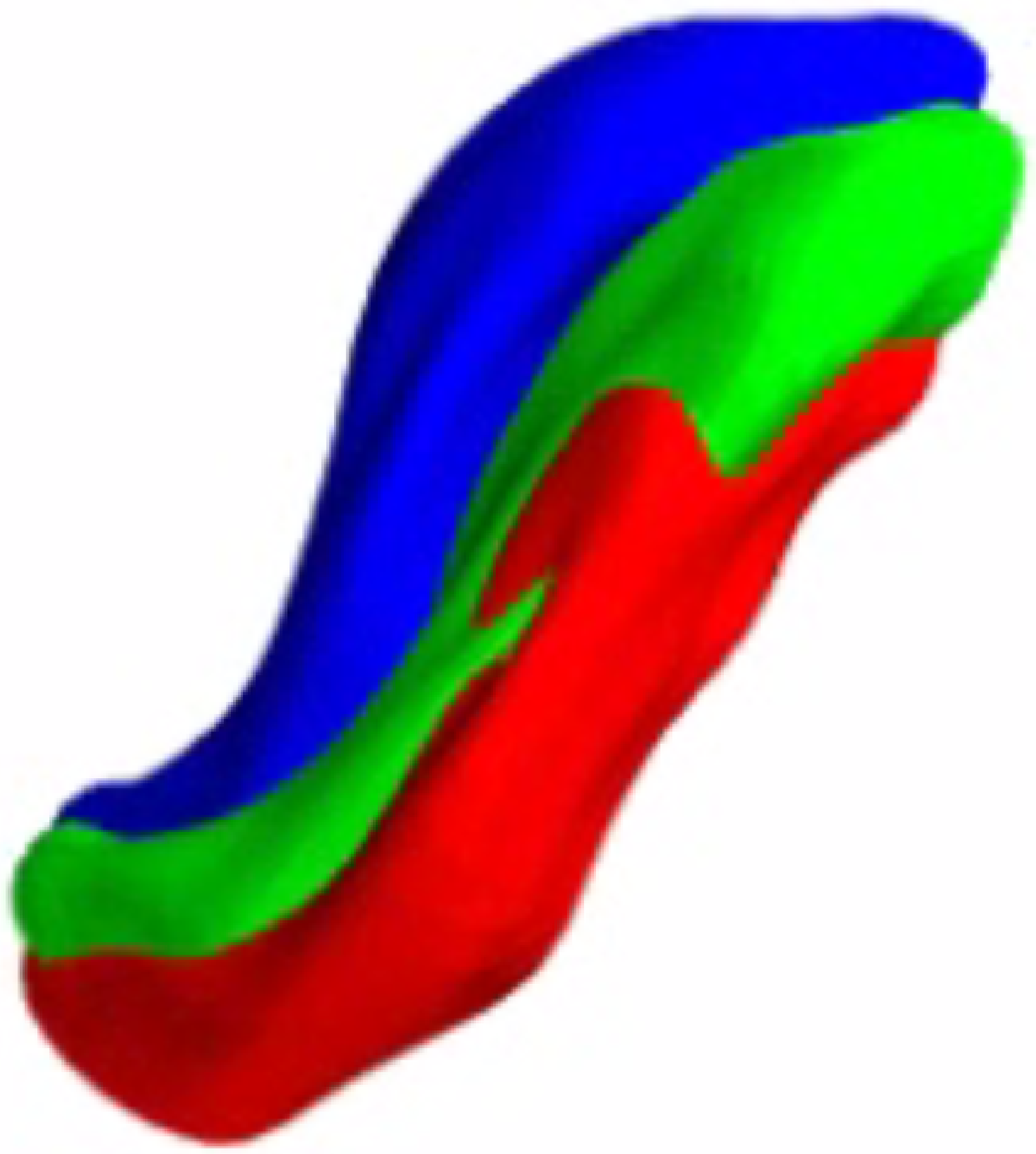}
\caption{\footnotesize{$3$ perturbations of the geodesic shooting}}
  \label{voishippo}
 \end{minipage} \hfill
\end{figure}

As said above, following such kind of decomposition we can get a wide range of embedded manifolds in the euclidean space. To illustrate the model in $3$ dimensions, we give some examples of the stochastic shooting between an initial hippocampus\footnote{data courtesy of G. Gerig (University of Utah) used in \cite{Gerig:data} from a study on autism disease} (the so called part of the brain located in the medial temporal lobe) and a target one. The figure Fig.~\ref{initialhippo} represents the initial hippocampus and the figure Fig.~\ref{voishippo} shows in the same figure $3$ simulations (red, green and blue) of the SDE with an initial momentum that solves the boundary value problem between the initial hippocampus and a target hippocampus not showed here.

In the simulations we observed that a statistical study of the stochastic model requires to control carefully the invariance of the system with respect to a change of time coordinates and the understanding of the relation between the kernel size and the variance term of the model.

\section{Conclusion and open perspectives} \label{conclusion}

The original motivation of this work was to prove an extension of the stochastic model in the case of landmarks to the infinite-dimensional case of shapes. In Section \ref{landmarkcaseSection} we proved that the solutions of the SDE on landmarks are defined for all time by controlling the energy of the system with the help of It\^{o} formula on the Hamiltonian. Hence it gives a stochastic shape evolution model in the case of landmarks.
We then developed in Subsection \ref{prop_needed} a strategy to extend the SDE in infinite dimension to a well chosen Hilbert space. We discussed in Section \ref{F_s} an example of such a well chosen Hilbert space by introducing the spaces $F_s$ in any dimension. As we aimed to prove a convergence of the finite-dimensional case of landmarks to the case of shapes, we also gave a presentation of the cylindrical Brownian motion and the related It\^{o} integral that really suits our needs. Apart from our particular choice for the spaces $F_s$, we proved in Section \ref{generalsolutions} under general hypothesis (Section \ref{Approx_lemmas}) on the finite-dimensional approximation subspaces that the solutions of the SDE on these subspaces converge almost surely to the solutions of the SDE in infinite dimension. Finally, we dealt with a general variance term to account for a possible reparameterization of the shape as detailed in Section \ref{applications}, where some simulations in 2D and 3D are showed.

On the mathematical aspects of this work, we did not explore yet all the possibilities for the structure of noise in our framework. At this point an operator $\sigma \in L(P,P)$ (with the $L^1$ condition) seemed to be sufficient for practical applications. But for instance we would like to know if the situation where the noise is supported by a finite sum of Dirac measures belongs to our framework for a white noise on $L^2(T_n,\mu)$ with $\mu$ the Lebesgue measure. Moreover we guess that our work can be extended to any Radon measure on $T_n$ instead of the Lebesgue measure, which would extend the structures of the noise that can be attained.

Another mathematical perspective opened with this work is to study stochastic perturbations of the EPDiff equation:
\begin{equation}
\begin{cases}
dm + ad_{u}^*m \, dt= \sigma dB_t \, , \\
u = K\star m\,,
\end{cases}
\end{equation}
where $K$ is the chosen kernel. However, we face the problem of the definition of the noise on the space of momentum which is the dual of the Hilbert space of vector fields $V$. As a consequence the choice on the noise is really broad and we underline that in our case the manifold on which the diffeomorphism group acts gives the structure of the noise.

Our central motivation with these stochastic second-order model is to design growth model on shape spaces. Enhancements of this model are at hand at least in two directions: the first one is to incorporate a deterministic control variable (absolutely continuous) on the evolution of the momentum to fit in the splines framework presented in the introduction section. The other direction is to incorporate in the random noise a jump process to account for sudden transformations of the shape. Therefore the enhanced model could be written as:

\begin{equation} \label{generalized_stochastic_system}
\begin{cases}
dp_t = -\partial_q H(p_t,q_t) + u_t + \ve d B_t  + dJ_t\\
dq_t = \partial_p H(p_t,q_t) \, ,
\end{cases}
\end{equation}
with $J_t$ for instance a compound Poisson process $J_t = \sum_{i=1}^{N(t)} j(i)$ with $j(i)$ independent and identically distributed random variables on $P=F_{-s}$ independent of the Poisson process. This jump process gives the opportunity to introduce discontinuities in the momentum evolution.

The parameterization of the noise is a crucial issue, since the main issue in this model is a certain redundancy between the kernel and the choice of the noise. Also this parameterization is strongly related to the observed data. For instance, if evolutions of surfaces are considered, the corresponding momentums are always orthogonal to the current shape. Therefore the noise introduced needs to keep this structure unchanged. More generally, the effect of the noise should keep the symmetries of the deterministic solutions.
\\
Last, the role of the time variable is also crucial in the estimation of the time variability of a biological organ. That's why we should study stochastic models able to retrieve the information on the speed of the evolution. Although the speed is encoded in the stochastic model, the model could be able to learn a time reparameterization. 
It is a first step in the direction of designing a realistic growth model for shapes within the framework of \emph{large deformation diffeomorphisms} and it underlines the efficiency of second-order models as good candidates. 
\\
Being aware of some developments for the estimation of the diffusion parameters for second-order models in \cite{RePEc:bla:jorssb:v:71:y:2009:i:1:p:49-73} or in \cite{PokernPhD}, our main research efforts will be focused on consistent statistical schemes to be applied on biomedical data.

\section*{acknowledgements}
I am grateful to Alain Trouvé for important contributions and Darryl D. Holm for his support.

\section{Appendix}
\begin{prop} \label{tensor_product_algebras}
  Let $(E_i)_{1\leq i\leq 2}$ be two separable RKHS of real valued
  functions defined respectively on $\mathcal{X}_1$ and
  $\mathcal{X}_2$. Then we have~:
  \begin{enumerate}
  \item The tensor space $E_1\otimes E_2$ is a separable RKHS on
    $\mathcal{X}_1\times \mathcal{X}_2$.
  \item If for any $i\in \{1,2\}$, there exists $M_i>0$ such that for
    any $f,f'\in E_i$, $ff'\in E_i$ and $|ff'|_{E_i}\leq
    M_i|f|_{E_i}|f'|_{E_i}$ (we say that $E_i$ is an algebra of
    functions with continuous product), then the same result is true
    for $E_1\otimes E_2$. More precisely, for any $g,g'\in E_1\otimes
    E_2$, we have $gg'\in E_1\otimes E_2$ and
$$|gg'|_{E_1\otimes E_2}\leq M_1M_2|g|_{E_1\otimes E_2} |g'|_{E_1\otimes E_2}$$ 
\end{enumerate}
\end{prop}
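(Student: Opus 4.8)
The plan is to treat the two assertions separately: the first is routine Hilbert-space bookkeeping, while the second carries all of the difficulty.

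For (1), I would realize $E_1\otimes E_2$ as the Hilbert-space tensor product, so that if $(e_n)_n$ and $(f_m)_m$ are orthonormal bases of $E_1$ and $E_2$ then $(e_n\otimes f_m)_{n,m}$ is an orthonormal basis; countability of the two bases gives separability. To see it is an RKHS on $\mathcal{X}_1\times\mathcal{X}_2$ with kernel $K\big((x_1,x_2),(y_1,y_2)\big)=k_1(x_1,y_1)\,k_2(x_2,y_2)$, I would verify the reproducing identity on elementary tensors, $\langle f_1\otimes f_2,\,k_1(\cdot,x_1)\otimes k_2(\cdot,x_2)\rangle = f_1(x_1)f_2(x_2)$, and extend it to all of $E_1\otimes E_2$ by linearity and density of the algebraic tensor product. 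This simultaneously shows that the abstract element $g$ is faithfully represented by the function $(x_1,x_2)\mapsto g(x_1,x_2)$ and that evaluation is bounded, with $|g(x_1,x_2)|\le\sqrt{k_1(x_1,x_1)k_2(x_2,x_2)}\,|g|_{E_1\otimes E_2}$.

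For (2), the elementary-tensor case is immediate: as functions $(f_1\otimes f_2)(h_1\otimes h_2)=(f_1h_1)\otimes(f_2h_2)$, so
\[
|(f_1\otimes f_2)(h_1\otimes h_2)|_{E_1\otimes E_2}=|f_1h_1|_{E_1}|f_2h_2|_{E_2}\le M_1M_2\,|f_1\otimes f_2|_{E_1\otimes E_2}|h_1\otimes h_2|_{E_1\otimes E_2}.
\]
The real content is to upgrade this to arbitrary $g,g'\in E_1\otimes E_2$, equivalently to show that every $g$ is a multiplier of $E_1\otimes E_2$ with $\|M_g\|_{\mathrm{op}}\le M_1M_2\,|g|_{E_1\otimes E_2}$, where $M_g$ is the pointwise-multiplication operator. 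I would reformulate both hypothesis and conclusion through the multiplier characterization: for an RKHS with kernel $k$, a function $\varphi$ satisfies $\|M_\varphi\|_{\mathrm{op}}\le c$ exactly when $(c^2-\varphi(x)\varphi(y))k(x,y)$ is positive. Thus the hypothesis on $E_i$ becomes: for every $f$, $(M_i^2|f|^2-f(x)f(y))k_i(x,y)\succeq 0$, and the goal becomes $(M_1^2M_2^2|g|^2-g(u)g(v))K(u,v)\succeq 0$. On elementary tensors this positivity follows from a Schur-product manipulation: writing $A=f(x_1)f(y_1)k_1\preceq A_0=M_1^2|f|^2k_1$ and $B,B_0$ analogously, one has $A_0\circ B_0-A\circ B=(A_0-A)\circ B_0+A\circ(B_0-B)\succeq 0$, using that the Schur product of positive kernels is positive.

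The hard part will be the general, entangled $g$, for which $g(u)g(v)$ no longer factorizes. The route I would take is to test positivity on an arbitrary finite point set, turning the statement into a matrix inequality: with $K_1,K_2$ the Gram matrices of the projected points, the claim reduces to a Schur-multiplicativity of the RKHS algebra constant, $M(K_1\circ K_2)\le M(K_1)M(K_2)$, where $M(K)=\sup_v\|D_v\|_{\mathrm{op},K}/|v|_K$ and $D_v$ is the diagonal multiplication operator. I expect the genuine obstacle to lie exactly here: the single-factor hypotheses, read as Schur-product inequalities on the block operators attached to the two kernels, only control the \emph{elementary-tensor directions} $a\otimes\xi$, so one cannot simply tensor the two positivity conditions as full operator inequalities. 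I would try to close this by carrying the Schur-product identity above to the level of the operator-valued kernels built from a singular value decomposition $g=\sum_k\lambda_k\phi_k\otimes\psi_k$, using the completeness relation $\sum_k\phi_k(x)\phi_k(y)=k_1(x,y)$ to keep the block matrices positive, and checking that the mixed Schur products telescope without inflating the constant beyond $M_1M_2$. Once the multiplier bound is established, the displayed inequality $|gg'|_{E_1\otimes E_2}\le M_1M_2|g|_{E_1\otimes E_2}|g'|_{E_1\otimes E_2}$ follows at once.
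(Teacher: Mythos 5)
Part (1) of your proposal is correct and is essentially the paper's own argument: the paper bounds the evaluation functional $\delta_{x_1,x_2}$ directly on finite linear combinations of the product Hilbert basis and then checks that $g\mapsto (x\mapsto g(x_1,x_2))$ is injective, while you package the same estimate as the reproducing identity for the product kernel $k_1k_2$; both give $|g(x_1,x_2)|\le \sqrt{k_1(x_1,x_1)k_2(x_2,x_2)}\,|g|_{E_1\otimes E_2}$. Do record the injectivity check (which the paper spells out), since it is what entitles you to regard $E_1\otimes E_2$ as a space of functions rather than an abstract Hilbert space.

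Part (2) is where the content lies, and you have correctly diagnosed the difficulty but not resolved it. Your elementary-tensor computation and the Schur-product identity $A_0\circ B_0-A\circ B=(A_0-A)\circ B_0+A\circ(B_0-B)$ are fine, but they prove the inequality only for $g=f_1\otimes f_2$ and $g'=h_1\otimes h_2$. For a general $g=\sum_k\sigma_k\,\phi_k\otimes\psi_k$ one has $M_g=\sum_k\sigma_k\,M_{\phi_k}\otimes M_{\psi_k}$, and the single-factor hypotheses only give $\|M_{\phi_k}\|\le M_1$ and $\|M_{\psi_k}\|\le M_2$ individually; summing the elementary-tensor bounds, or telescoping the Schur products term by term, yields $\|M_g\|\le M_1M_2\sum_k\sigma_k$, a nuclear-type bound, whereas the statement requires $M_1M_2(\sum_k\sigma_k^2)^{1/2}=M_1M_2|g|$. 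Replacing the $\ell^1$ sum of singular values by the $\ell^2$ sum is exactly the theorem, and the phrase ``checking that the mixed Schur products telescope without inflating the constant'' is a restatement of the goal, not a step of a proof: nothing in your sketch shows how the orthonormality of $(\phi_k)$ and $(\psi_k)$ would be exploited to gain the square-function improvement. So the proposal has a genuine gap at its central step. For comparison, the paper proceeds by a different route: it expands $g_N=\sum\lambda_{m,n}a_m\otimes b_n$ and $g'_N=\sum\lambda'_{p,q}a_p\otimes b_q$ in the product basis, uses $|(a_ma_p)\otimes(b_nb_q)|_{E_1\otimes E_2}\le M_1M_2$, bounds $|g_Ng'_N|$ by $M_1M_2\sum_{m,n,p,q}|\lambda_{m,n}||\lambda'_{p,q}|$, invokes Cauchy--Schwarz, and concludes by weak compactness and lower semicontinuity of the norm. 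You should be aware that the delicate point there is the very same $\ell^1$-versus-$\ell^2$ issue you ran into ($\sum_{m,n\le N}|\lambda_{m,n}|$ is not controlled by $(\sum\lambda_{m,n}^2)^{1/2}$ uniformly in $N$), so you have located the real obstruction; but locating it is not the same as closing it, and as written your part (2) does not constitute a proof.
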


\begin{proof}
\indent \textbf{Proof of 1)~:} If $(a_n)_{n\geq 0}$ and $(b_n)_{n\geq 0}$ are Hilbert basis of $E_1$ and $E_2$, then $(a_m\otimes b_n)_{m,n\geq 0}$ is an Hilbert basis for $E_1\otimes E_2$. Now for any $(x_1,x_2)\in\mathcal{X}_1\times \mathcal{X}_2$, there exists a unique  continuous map $\delta_{x_1,x_2}:E_1\otimes E_2\to \mathbb{R}$ defined by the values  $\delta_{x_1,x_2}(a_m\otimes b_n)\doteq a_m(x_1)b_n(x_2)$ on the Hilbert basis. Indeed,  for any finite linear combination $g=\sum_{(m,n)\in I}\lambda_{m,n}a_m\otimes b_n$, one has 
$$|\delta_{x_1,x_2}(g)|^2=\left|\left(\sum_{n\geq 0}\left(\sum_{m\text{ s.t.}(m,n)\in I}\lambda_{m,n}a_m(x_1)\right)b_n\right)(x_2)\right|^2\,.$$ 
Since $E_1$ and $E_2$ are two RKHS, there exist 
$C_1(x_1)$ and $C_2(x_2)$ (depending on $x_1$ and $x_2$) such that $|a(x_1)|\leq C_1(x_1)|a|_{E_1}$ and $b(x_2)\leq C_2(x_2)|b|_{E_2}$ for any $a,b\in E_1\times E_2$. Therefore, 
\begin{multline*}
  |\delta_{x_1,x_2}(g)|^2\leq C_2(x_2)^2\sum_{n\geq
    0}\left(\sum_{m\text{ s.t.}(m,n)\in
      I}\lambda_{m,n}a_m(x_1)\right)^2\\
\leq
  C_2(x_2)^2C_1(x_1)^2\sum_{(m,n)\in I}\lambda_{m,n}^2\leq (C_1(x_1)C_2(x_2))|g|_{E_1\otimes E_2}^2\,.
\end{multline*}
In particular, we have
$$\|\delta_{x_1,x_2}\|\leq \|\delta^1_{x_1}\|\|\delta^2_{x_2}\|$$
where $\delta_{x_i}^i:E_i\to\mathbb{R}$ such that $\delta_{x_i}^i(f)=f(x_i)$ for any $f\in E_i$.\\
\noindent
Note finally that for any $g\in E_1\otimes E_2$, if $\delta_{x_1,x_2}(g)=0$ 
for any $(x_1,x_2)\in E_1\times E_2$, then $g=0$. 
Indeed, if $g=\sum_{m,n}\lambda_{m,n}a_m\otimes b_n$ then 
$g=\sum_{n\geq }A_n\otimes b_n$ where $A_n=\sum_{m\geq 0}\lambda_{m,n}a_m$ 
so that for any $(x_1,x_2)\in\mathcal{X}_1\times\mathcal{X}_2$, we have 
$\sum_{n\geq 0}A_n(x_1)b_n(x_2)=\delta^2_{x_2}(\sum_{n\geq 0}A_n(x_1)b_n)=0$. 
For fixed $x_1\in \mathcal{X}_1$, and $b\doteq \sum_{n\geq 0}A_n(x_1)b_n$, 
we have $b(x_2)$ for any $x_2\in\mathcal{X}_2$ so that $b=0$ and $A_n(x_1)=0$ 
for any $n\geq 0$. Considering now arbitrary $x_1$, we get that 
$\sum_{m\geq 0}\lambda_{m,n}a_m=0$ so that $\lambda_{m,n}=0$ and $g=0$. 
\noindent
Hereafter, we will denote 
(without ambiguity) 
$$g(x_1,x_2)\doteq \delta_{x_1,x_2}(g)\,.$$

\textbf{Proof of 2)~:} If $g_N=\sum_{0\leq m,n\geq N}\lambda_{m,n}a_m\otimes b_n$ and $g'_N=\sum_{0\leq m,n\leq N}\lambda'_{m,n}a_m\otimes b_n$, then 
$$(g_Ng'_N)=\sum_{0\leq m,n,p,q\leq N}\lambda_{m,n}\lambda'_{p,q}(a_m\otimes f_n)(a_p\otimes b_q)\,.$$
Since $a_ma_p\in E_1$ and $b_nb_q\in E_2$ and 
$$(a_m\otimes f_n)(x_1,x_2)(a_p\otimes b_q)(x_1,x_2)=a_m(x_1)a_p(x_1)b_n(x_2)b_q(x_2)=(a_ma_p)(x_1)(b_nb_q)(x_2)$$ 
we get $(a_m\otimes f_n)(a_p\otimes b_q)=(a_ma_p)\otimes(b_nb_q)$ and
$$|(a_ma_p)\otimes(b_nb_q)|_{E_1\otimes E_2}=|a_ma_p|_{E_1}|b_nb_q|_{E_2}\leq M_1M_2\,.$$
Hence $|g_Ng'_N|_{E_1\otimes E_2}\leq M_1M_2\sum_{0\leq m,n,p,q\leq N}|\lambda_{m,n}\lambda'_{p,q}|\stackrel{C.S.}{\leq} M_1M_2|g|_{E_1\otimes E_2}|g|_{E_1\otimes E_2}$ and the sequence $g_Ng'_N$ is bounded in $E_1\otimes E_2$. Since for any weak limit, we have $g_{N_k}g'_{N_k}(x_1,x_2)\to (gg')(x_1,x_2)$ we get that $gg'\in E_1\otimes E_2$ and by lower semi-continuity of the norm for the weak convergence~:
$$|gg'|_{E_1\otimes E_2}\leq M_1M_2|g|_{E_1\otimes E_2}|g'|_{E_1\otimes E_2}\,.$$
\end{proof}

\nocite{Lindenstrauss}
\bibliographystyle{abbrv}
\bibliography{SSM}
\end{document}